\newtheorem{thm}{Theorem}[section]
\newtheorem{prop}{Proposition}[section]
\newtheorem{rem}{Remark}[section]
\newtheorem{lem}{Lemma}[section]
\newtheorem{defi}{Definition}[section]
\newcommand\be {\begin{equation}}
	\newcommand\ee {\end{equation}}
\def\del{\partial}
\def\DOT{\!\cdot\!}
\def\dt{{\Delta t}} 
\def\u{\mbox{\boldmath $u$}}
\def\U{\mbox{\boldmath $U$}}
\def\f{\mbox{\boldmath $f$}}
\def\pomega{\mbox{\boldmath $\omega$}}
\def\ppsi{\mbox{\boldmath $\psi$}}
\numberwithin{equation}{section}
\journal{}
\begin{document}

\begin{frontmatter}

\title{Long-time stability and convergence analysis of an IMEX BDF3 scheme for 2-D incompressible 
	Navier-Stokes equation}
\author[Cheng]{Kelong Cheng} %% Author name
%% Author affiliation
\affiliation[Cheng]{organization={School of Science, Civil Aviation Flight University of China},%Department and Organization
            city={Guanghan},
            postcode={618307}, 
            state={Sichuan},
            country={China}}
\author[Sun]{Jingwei Sun}
\author[Sun]{Hong Zhang\corref{cor1}}
\cortext[cor1]{Corresponding author\ead{zhanghnudt@163.com}}
\affiliation[Sun]{organization={National University of Defense Technology},%Department and Organization
	city={Changsha},
	postcode={410073}, 
	state={Hunan},
	country={China}}
%% Abstract
\begin{abstract}
%% Text of abstract
High-order time-stepping schemes are crucial for simulating incompressible fluid flows due to their ability to capture complex turbulent behavior and unsteady motion. In this work, we propose a third-order accurate numerical scheme for the two-dimensional incompressible Navier-Stokes equation. Spatial and temporal discretization is achieved using Fourier pseudo-spectral approximation and the BDF3 stencil, combined with the Adams-Bashforth extrapolation for the nonlinear convection term, resulting in a semi-implicit, fully discrete formulation. This approach requires solving only a single Poisson-like equation per time step while maintaining the desired temporal accuracy. Classical numerical experiments demonstrate the advantage of our scheme in terms of permissible time step sizes. Moreover, we establish uniform-in-time bounds for the vorticity in both $L^2$ and higher-order $H^m$ norms ($m \geq 1$), provided the time step is sufficiently small. These bounds, in turn, facilitate the derivation of optimal convergence rates.
\end{abstract}

%%Graphical abstract
%\begin{graphicalabstract}
%\includegraphics{grabs}
%\end{graphicalabstract}

%%Research highlights
%\begin{highlights}
%\item Research highlight 1
%\item Research highlight 2
%\end{highlights}

%% Keywords
\begin{keyword}
	Incompressible Naiver-Stokes equation\sep backward differentiation formula \sep Fourier pseudo-spectral \sep Adams-Bashforth extrapolation \sep uniform-in-time estimate
%% keywords here, in the form: keyword \sep keyword

%% PACS codes here, in the form: \PACS code \sep code

%% MSC codes here, in the form: \MSC code \sep code
%% or \MSC[2008] code \sep code (2000 is the default)

\end{keyword}

\end{frontmatter}

\section{Introduction}
In this paper, we focus on the third-order backward differentiation (BDF) approximations for the unsteady incompressible Navier-Stokes equation (NSE) in bounded two-dimensional (2-D) domain subjected to the periodic boundary condition: 
\begin{eqnarray}
	\del_t \omega + \u \DOT \nabla \omega = \nu \Delta \omega + \f ,  \label{NSE1} 
	\\
	- \Delta \psi = \omega ,   \label{NSE2} 
	\\
	\u = \nabla^{\bot} \psi  
	= \left( \partial_y \psi , - \partial_x \psi \right) .  \label{NSE3}
\end{eqnarray} 
Here, $\u=(u,v)^{T}$ is the velocity field, $\omega = - u_y + v_x$ represents the scalar vorticity, $\psi$ stands for the scalar stream function, $\nu=1/{\mbox{Re}}$ represents the coefficient of the fluid kinematic viscosity with the Reynolds numbers $\mbox{Re}$, and the external force term $\f$ has a zero average: $\int_\Omega \, \f \, d {\bf x} =0$. For simplicity, a uniform-in-time $L^2$ bound is assumed for $\f$, i.e., $\f \in L^\infty (0,T; L^2)$ and $ \left\| \f ( \cdot , t ) \right\|  \le M$ for any $t >0$. 
In this formulation, the pressure gradient disappears,  and the velocity vector $\u = \nabla^{\bot} \psi $ is automatically divergence-free. This fact brings a great deal of numerical convenience. 
It is also noticed that, all three evolutionary terms in~\eqref{NSE1} have a zero average: 
\begin{equation} 
	\int_\Omega \, \u \DOT\nabla  \omega \, d {\bf x} 
	= \int_\Omega \, \nabla \cdot ( \u \omega ) \, d {\bf x} 
	= 0 , \quad \int_\Omega \, \Delta \omega \, d {\bf x}  = 0 , \quad 
	\int_\Omega \, \f \, d {\bf x} = 0 , 
	\label{average-1} 
\end{equation}  
which comes from the divergence-free property of $\u$, combined with the periodic boundary condition. In turn, we conclude that $\omega ({\bf x}, t)$ always keeps a zero-average: 
\begin{equation} 
	\int_\Omega \, \partial_t \omega \, d {\bf x} = 0 ,  \qquad \mbox{so that} \, \, \, 
	\int_\Omega \, \omega ({\bf x}, t) \, d {\bf x} 
	=  \int_\Omega \, \omega ({\bf x}, 0) \, d {\bf x}  = 0 ,  \, \, \, \forall t > 0 . 
	\label{average-2} 
\end{equation}  
As a result, the kinematic equation $\Delta \psi = \omega$ always has a unique solution, if we impose a zero-average requirement for the stream function: $\int_\Omega \, \psi \, d {\bf x}= 0$. 

It is well-known that long-time stable (uniformly bounded in time) quantity for the NSE \eqref{NSE1}-\eqref{NSE3} is the enstrophy variable, namely $\frac12\|\omega\|_2^2$, so that the dynamics possesses a global attractor and invariant measures \cite{constantin1988navier,foias2001navier,temam2012infinite}. Consequently, preserving the long-time stability of key physical variables, such as vorticity, is crucial for accurately capturing long-term dynamics. Numerous efforts have been made to ensure stability and dissipativity in various forms (see for instance \cite{cheng2009semi,foias1991dissipativity,foias1994some,ju2002global,shen1989convergence,shen1990long,tone2009long,tone2006long,geveci1989convergence,larsson1989long} among many others). In the existing works, there are many off-the-shelf efficient solvers available, since the problem essentially reduces to solving a Poisson equation at each time step. A more popular method is IMEX, as it strikes a balance between stability (implicit treatment of the diffusion term) and efficiency (explicit treatment of the nonlinear term). Below, we provide a few examples to illustrate this.
\begin{itemize}
	\item The simplest example is the following first-order IMEX Euler scheme \cite{gottlieb2012long}, \begin{equation}\label{ex1}
		\frac{\omega^{n+1} - \omega^n}{\Delta t} + \bm{u}^n\cdot\nabla\omega^n = \nu\Delta\omega^{n+1} + \bm{f}^{n+1},
	\end{equation}
where $\Delta t$ is the time-step size and $\omega^k$ is the approximation of the vorticity at $t^k$. This scheme is efficient because it requires only two Poisson solvers per time step: one for the vorticity and another for the stream function. Moreover, the scheme's convergence over any fixed time interval is well-established and follows standard procedures, see the derivations in earlier literature \cite{gresho1991incompressible,gresho1991some,gresho1991some,he2008euler,medjo1996navier}. More importantly, the authors \cite{gottlieb2012long} proved that \eqref{ex1} is long-time stable in $L^2$ and $H^1$ and that the global attractor as well as the invariant measures of the scheme converge to those of the NSE at a vanishing time step.\\
\item  The IMEX BDF2 scheme takes the following form \cite{ascher1995implicit,karniadakis1991high,varah1980stability,heister2017unconditional},
\begin{equation}\label{ex2}
	\frac{3\omega^{n+1} - 4\omega^n + 2\omega^{n-1}}{2\Delta t} + 2\bm{u}^n \cdot\nabla\omega^n - \bm{u}^{n-1}\cdot\omega^{n-1} = \nu\Delta\omega^{n+1} + \bm{f}^{n+1},
\end{equation}
where the Adams-Bashforth treatment of the nonlinear term is in the form of linear multi-step fashion, the so-called extrapolated Gear's scheme. The authors \cite{heister2017unconditional} proved that the scheme's vorticity and velocity are both long-time stable in the $L^2$ and $H^1$ norms, without any time-step restriction.
\item The IMEX BDF2 scheme,
\begin{equation}\label{ex3}
			\frac{3\omega^{n+1} - 4\omega^n + 2\omega^{n-1}}{2\Delta t} + (2\bm{u}^n - \bm{u}^{n-1})\cdot\nabla(2\omega^n - \omega^{n-1}) = \nu\Delta\omega^{n+1} + \bm{f}^{n+1}
\end{equation}
was investigated by Wang \cite{wang2012efficient}. Compared to \eqref{ex2}, \eqref{ex3} features a specialized explicit Adams-Bashforth treatment for the advection term. The author primarily analyzed the scheme’s convergence in terms of long-time statistical properties (stationary statistics).
\item The IMEX second-order multi-step scheme \cite{gottlieb2012stability,wang2016local,cheng2016long},
\begin{equation}
	\begin{aligned}
			&\frac{\omega^{n+1} - \omega^n}{\Delta t} + \frac34\big(\bm{u}^n\cdot\nabla\omega^n + \nabla\cdot(\bm{u}^n\omega^n)\big) - \frac14\big(\bm{u}^{n-1}\cdot\nabla\omega^{n-1} + \nabla\cdot(\bm{u}^{n-1}\omega^{n-1})\big)\\
			=&\nu\Delta\big(\frac34\omega^{n+1}  + \frac14\omega^{n-1}\big) + \bm{f}^{n+1/2}
	\end{aligned}
\end{equation} 
 has been widely investigated (the third- and fourth-order schemes mentioned are omitted here). It is observed that Adams-Moulton interpolation causes the diffusion term to be more concentrated at the next time step, meaning that the coefficient at $t^{n+1}$ dominates the sum of the remaining diffusion coefficients.
\end{itemize}

In fact, the IMEX BDF3 scheme which we will adopt in this work has been applied to the incompressible NSEs with various spatial approximations \cite{ascher1995implicit}. However, the study addressed only linear stability analysis and the effectiveness of aliasing reduction in spectral methods, while the nonlinear analysis remains unavailable, even for local-in-time error estimates. Therefore, the main objective of this paper is to apply the IMEX BDF3 scheme to the 2-D NSE \eqref{NSE1}-\eqref{NSE3}, and demonstrate that, provided the time-step size is sufficiently small, the scheme \eqref{scheme-BDF3-1}-\eqref{scheme-BDF3-3} is long-time stable. Specifically, we establish the uniform-in-time $L^2$ and $H^m$ bounds of the fully discrete pseudo-spectral numerical solution for $m\ge1$ by the global-in-time energy stability. Furthermore, we demonstrate that the desired optimal convergence rate is achieved and rigorously proved using some skills analogous to the stability analysis.

The paper is organized as follows. We begin in Section 2 with reviewing the IMEX BDF3 method with the Fourier pseudo-spectral approximation for the NSE \eqref{NSE1}-\eqref{NSE3}. A few preliminary estimates in the Fourier pseudo-spectral space are also presented. The following two sections are devoted to the long-time stability and convergence analysis of the fully discrete scheme \eqref{scheme-BDF3-1}-\eqref{scheme-BDF3-3}. Some numerical experiments are provided in Section 5. The final section gives some concluding remarks.
\section{Fully discrete numerical scheme}

\subsection{Fourier pseudo-spectral differentiation} 
We assume that the domain is given by $\Omega = [0,L]^2$, with a uniform mesh size: $N_x = N_y=N$. We also choose a mesh size $h=L/N$ with $N=2K+1$ being a positive odd integer, and set $N_x = N_y = N$. The case for an even $N$ could be similarly treated. All the spatial variables are evaluated at the regular numerical grid $\Omega_N$, in which $x_i = i h$, $y_j=jh$, $0 \le i , j \le N$. Without loss of generality, it is also assumed that $L=1$. For a 2-D periodic function $f$ with a discrete Fourier expansion 
\begin{equation} 
	f_{i,j} = \sum_{k_1,l_1=-K}^{K}  
	\hat{f}_{k_1,l_1} \exp \left( 2 k_1 \pi {\rm i} x_i \right)  
	\exp \left( 2 l_1 \pi {\rm i} y_j \right) , 
	\label{spectral-coll-1}
\end{equation}
its Fourier collocation spectral approximations to first and second order partial derivatives (in the $x$ direction) become  
\begin{align} 
	\left( {\cal D}_{Nx} f \right)_{i,j} =& \sum_{k_1,l_1=-K}^{K}  
	\left( 2 k_1 \pi {\rm i} \right) \hat{f}_{k_1,l_1} 
	\exp \left( 2 \pi {\rm i} ( k_1 x_i + l_1 y_j ) \right) ,   \label{spectral-coll-2-1}     
	\\
	\left( {\cal D}_{Nx}^2 f \right)_{i,j} =& \sum_{k_1,l_1=-K}^{K}  
	\left( - 4 \pi^2 k_1^2 \right) \hat{f}_{k_1,l_1} 
	\exp \left( 2 \pi {\rm i} ( k_1 x_i + l_1 y_j) \right) . \label{spectral-coll-2-3}   
\end{align}
The ${\cal D}_{Ny}$ and ${\cal D}_{Ny}$ operators could be similarly defined. In turn, the discrete gradient, divergence and Laplacian are formulated as  
\begin{equation} 
	\nabla_N f = \left(  \begin{array}{c} 
		{\cal D}_{Nx} f  \\ 
		{\cal D}_{Ny} f  
	\end{array}  \right)  ,  \quad 
	\nabla_N \cdot \left(  \begin{array}{c} 
		f _1 \\ 
		f _2 
	\end{array}  \right)  = {\cal D}_{Nx} f_1 + {\cal D}_{Ny} f_2 ,  
	\quad \Delta_N f =  \left( {\cal D}_{Nx}^2  + {\cal D}_{Ny}^2 \right) f , 
	\label{spectral-coll-3}  
\end{equation}
at the point-wise level. Of course, an obvious fact is observed: $\nabla_N \cdot \nabla_N f = \Delta_N f$, for any grid function $f$. See the derivations in the related references \cite{boyd2001chebyshev,canuto1982approximation,gottlieb1977numerical}

Moreover, given any periodic grid functions $f$ and $g$ (over the 2-D numerical grid), the spectral approximations to the $L^2$ inner product and $L^2$ norm are introduced as 
\begin{eqnarray} 
	\left\| f \right\|_2 = \sqrt{ \left\langle f , f \right\rangle } ,  \quad \mbox{with}~
	\left\langle f , g \right\rangle  = h^2 \sum_{i,j=0}^{N -1}   f_{i,j} g_{i,j} . 
	\label{spectral-coll-inner product-1}
\end{eqnarray}
In fact, such a discrete $L^2$ inner product can also be viewed in the Fourier space other than in physical space, with the help of Parseval equality:
\be 
\left\langle f , g \right\rangle 
=   \sum_{k_1,l_1=-K}^{K}   
\hat{f}_{k_1,l_1}  \overline{\hat{g}_{k_1,l_1}}   
=   \sum_{k_1,l_1=-K}^{K}   
\hat{g}_{k_1,l_1}  \overline{\hat{f}_{k_1,l_1}}  , 
\label{spectral-coll-inner product-2} 
\ee 
in which $\hat{f}_{k_1,l_1}$, $\hat{g}_{k_1,l_1}$ are the Fourier coefficients of the grid functions $f$ and  $g$ in the expansion as in (\ref{spectral-coll-1}). Furthermore, a detailed calculation reveals the following formulas of integration by parts formulas at the discrete level (see the related discussions \cite{cheng2015fourier,gottlieb2012stability}): 
\begin{equation} 
	\left\langle f ,  \nabla_N \cdot \left(  \begin{array}{c} 
		g_1 \\ 
		g_2 
	\end{array}  \right)  \right\rangle  = - \left\langle \nabla_N f ,  \left(  \begin{array}{c} 
		g_1 \\ 
		g_2 
	\end{array}  \right)  \right\rangle ,   \qquad 
	\left\langle f ,  \Delta_N  g  \right\rangle  
	= - \left\langle \nabla_N f ,  \nabla_N g   \right\rangle  .  
	\label{spectral-coll-inner product-3}  
\end{equation} 

In addition, a discrete average is introduced for any periodic grid function $f$: $\overline{f} = \langle f, {\bf 1} \rangle$. Such a notation will facilitate the notations in the later analysis. 

\subsubsection{A few preliminary estimates in the Fourier pseudo-spectral space} 

It is well-known that the existence of aliasing error in the nonlinear term poses a serious challenge in the numerical analysis of Fourier pseudo-spectral scheme. To overcome this subtle difficulty, a periodic extension of a grid function and a Fourier collocation interpolation operator is introduced. For simplicity of presentation, we denote ${\cal B}^K$ as the space of trigonometric polynomials in $x$, $y$, and $z$ of degree up to $K$.

\begin{defi} 
	For any periodic grid function $f$ defined over a uniform 2-D numerical grid, we denote $\f_N$ as its periodic extension. In more detail, assume that the grid function $f$ has a discrete Fourier expansion as 
	\begin{equation} 
		f_{i,j}  = \sum_{k_1,l_1=-K}^{K}  
		\hat{f}_{k_1,l_1} 
		\exp \left( 2 \pi {\rm i} ( k_1 x_i + l_1 y_j) \right) , 
		\label{spectral-coll-projection-1}
	\end{equation} 
	its periodic extension (projection) into ${\cal B}^K$ is given by 
	\begin{equation} 
		\f_N ({\bf x})  = \sum_{k_1,l_1=-K}^{K}  
		\hat{f}_{k_1,l_1} 
		\exp \left( 2 \pi {\rm i} ( k_1 x + l_1 y ) \right) .
		\label{spectral-coll-projection-2}
	\end{equation} 
	Moreover, for any periodic continuous function ${\bf f}$, which may contain larger wave length, we define its collocation interpolation operator as  
	\begin{equation} \label{spectral-coll-projection-3}
	\begin{aligned} 
		{\bf f}_{i,j}  =& \sum_{k_1,l_1=-K}^{K}  
		(\hat{f}_c)_{k_1,l_1} 
		\exp \left( 2 \pi {\rm i} ( k_1 x_i + l_1 y_j ) \right) ,   \nonumber 
		\\
		{\cal I}_N {\bf f} ({\bf x})  =& \sum_{k_1,l_1=-K}^{K}  
		(\hat{f}_c)_{k_1,l_1} 
		\exp \left( 2 \pi {\rm i} ( k_1 x + l_1 y ) \right) .
\end{aligned}
	\end{equation} 
	Note that $\hat{f}_c$ may not be the Fourier coefficients of $\f$, because of the aliasing errors. 
\end{defi}

To overcome a key difficulty associated with the $H^k$ bound of the nonlinear term obtained by collocation interpolation, the following lemma is introduced. In fact, the case of $k=0$ was proven in Weinan E's earlier works \cite{weinan1992convergence,weinan1993convergence}. The case of $k \ge 1$ was analyzed in a recent work by C. Wang and S. Gottlieb \cite{gottlieb2012stability}. We cite the result here.

\begin{lem}  \label{lem:aliasing} 
	For any $\varphi \in {\cal B}^{2K}$ in dimension $d$, we have 
	\begin{equation} 
		\left\| {\cal I}_N \varphi \right\|_{H^k}  
		\le  ( \sqrt{2} )^d  \left\|  \varphi \right\|_{H^k} ,\quad \forall k \in\mathbb{Z},k\ge0.
		\label{spectral-coll-projection-4}
	\end{equation} 
\end{lem}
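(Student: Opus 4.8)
The plan is to reduce the $H^k$ estimate to the $L^2$ estimate (the case $k=0$) by exploiting the structure of the operators $\partial^\alpha$ and ${\cal I}_N$ in Fourier space. The key observation is that ${\cal I}_N$ commutes with any partial derivative $\partial^\alpha$ in the sense relevant here: differentiating a trigonometric polynomial and then interpolating gives the same result as interpolating and then differentiating, because on ${\cal B}^K$ both operations act diagonally by multiplication by $\prod_j (2\pi {\rm i} k_j)^{\alpha_j}$ on the Fourier mode with index ${\bf k}$, and interpolation merely folds wavenumbers modulo $N$ while the derivative symbol is determined by the folded (post-aliasing) index. More precisely, for $\varphi \in {\cal B}^{2K}$ one writes $\varphi = \sum_{{\bf k}} \hat\varphi_{\bf k} e^{2\pi {\rm i} {\bf k}\cdot{\bf x}}$ with $|k_j| \le 2K$, and $\partial^\alpha \varphi = \sum_{{\bf k}} (2\pi {\rm i})^{|\alpha|} {\bf k}^\alpha \hat\varphi_{\bf k} e^{2\pi {\rm i}{\bf k}\cdot{\bf x}} \in {\cal B}^{2K}$ as well. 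Since $\|\cdot\|_{H^k}^2$ is equivalent to $\sum_{|\alpha|\le k}\|\partial^\alpha(\cdot)\|_{L^2}^2$, it suffices to control $\|{\cal I}_N(\partial^\alpha\varphi)\|_{L^2}$ for each $\alpha$ with $|\alpha|\le k$.

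First I would fix a multi-index $\alpha$ with $|\alpha| \le k$ and set $\psi := \partial^\alpha \varphi \in {\cal B}^{2K}$. Applying the already-established case $k=0$ of the lemma to $\psi$ gives $\|{\cal I}_N \psi\|_{L^2} \le (\sqrt2)^d \|\psi\|_{L^2}$. Next I would argue that ${\cal I}_N \psi = {\cal I}_N(\partial^\alpha\varphi)$ is, up to the aliasing fold, the $\alpha$-derivative of ${\cal I}_N\varphi$; in any case the quantity $\sum_{|\alpha|\le k}\|{\cal I}_N(\partial^\alpha\varphi)\|_{L^2}^2$ is exactly (a constant times) $\|{\cal I}_N\varphi\|_{H^k}^2$ once one checks that the Fourier coefficients $(\hat f_c)$ produced by collocation interpolation of $\partial^\alpha\varphi$ coincide with $(2\pi {\rm i})^{|\alpha|}{\bf k}^\alpha$ times the interpolation coefficients of $\varphi$ on the folded index ${\bf k}$ — this is where the commuting property is used and must be verified carefully. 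Summing the bound $\|{\cal I}_N\partial^\alpha\varphi\|_{L^2}\le(\sqrt2)^d\|\partial^\alpha\varphi\|_{L^2}$ over $|\alpha|\le k$ and using the $H^k$ norm equivalence then yields $\|{\cal I}_N\varphi\|_{H^k}\le(\sqrt2)^d\|\varphi\|_{H^k}$.

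The main obstacle is the bookkeeping of the aliasing map: one must be careful that applying $\partial^\alpha$ before interpolation and after interpolation really do match, because the derivative symbol attached to an aliased mode is that of the folded wavenumber, not the original one, so a naive "differentiate then fold" versus "fold then differentiate" comparison needs the hypothesis $\varphi \in {\cal B}^{2K}$ (so that only a controlled, one-step fold into ${\cal B}^K$ occurs). Once this identity of Fourier coefficients is pinned down, the rest is a routine application of the $L^2$ case term by term. Alternatively — and this is probably the cleanest write-up — I would simply cite that the $k\ge1$ case is proved in \cite{gottlieb2012stability} and present the above as the idea of the argument, since the statement is quoted rather than reproved in full; but if a self-contained proof is wanted, the derivative-commutation reduction to $k=0$ is the route to take.
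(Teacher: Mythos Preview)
The paper does not prove this lemma at all: it simply cites Weinan E's papers for $k=0$ and Gottlieb--Wang \cite{gottlieb2012stability} for $k\ge 1$. So your fallback of just citing the reference is exactly what the paper does, and if that is all you intend, you are done.

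Your proposed self-contained reduction, however, has a genuine gap. The operators ${\cal I}_N$ and $\partial^\alpha$ do \emph{not} commute on ${\cal B}^{2K}\setminus{\cal B}^K$, and you yourself flag this: when a mode at wavenumber $m$ with $K<|m|\le 2K$ aliases to $\ell=m\mp N$, the coefficient of $\partial^\alpha({\cal I}_N\varphi)$ at $\ell$ is $(2\pi i\ell)^\alpha(\hat\varphi_\ell+\hat\varphi_m)$, whereas the coefficient of ${\cal I}_N(\partial^\alpha\varphi)$ at $\ell$ is $(2\pi i\ell)^\alpha\hat\varphi_\ell+(2\pi i m)^\alpha\hat\varphi_m$. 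These differ precisely because $\ell\ne m$, so the ``identity of Fourier coefficients'' you hope to pin down is false, and the $k=0$ bound applied to $\psi=\partial^\alpha\varphi$ controls the wrong quantity. The actual mechanism in the cited proof is not commutation but the weight monotonicity $|\ell|\le|m|$ for every aliased pair: one expands $\|{\cal I}_N\varphi\|_{H^k}^2$ directly in Fourier, uses $|a+b|^2\le 2(|a|^2+|b|^2)$ on each aliased coefficient, and then replaces the weight $(1+4\pi^2|\ell|^2)^k$ on the aliased term $|\hat\varphi_m|^2$ by the larger weight $(1+4\pi^2|m|^2)^k$ to reassemble $\|\varphi\|_{H^k}^2$. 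If you want a self-contained argument, that is the route to take.
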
 

The discrete version of discrete Poincar\'e inequality, elliptic regularity, as well as an $\ell^2$ orthogonality of a scalar field $f$ with its convection term, is stated in the following lemma. The proof will be straightforward, and we skip the details for the sake of brevity. 

\begin{lem} \label{lem:Poincare} 
The following statements hold:
\begin{enumerate}[(i)]
	\item (\textbf{Discrete Poincar\'e inequality})
	There exists a constant $C_2>0$, depending only on the domain size $L$
	(but independent of the mesh size $h$ and the time-step size $\Delta t$), such that
	\begin{equation}\label{lem 2-1}
		\| f \|_2 \le C_2 \| \nabla_N f \|_2 ,
		\qquad \text{for any grid function $f$ with } \overline{f} = 0 .
	\end{equation}
	\item (\textbf{Skew-symmetry of the convection operator})
	Let $\u$ be a discrete velocity field satisfying the discrete divergence-free
	condition $\nabla_N\cdot\u = 0$. Then, for any grid function $f$,
	\begin{equation}\label{lem 2-2}
		\left\langle  f ,  \u \DOT \nabla_N f 
		+ \nabla_N \cdot \left( \u f \right)   \right\rangle      
		= \left\langle  f ,  \u \DOT \nabla_N f  \right\rangle 
		- \left\langle \nabla_N f , \u f   \right\rangle   
		= 0  .
	\end{equation}
\end{enumerate}
\end{lem}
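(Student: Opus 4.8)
The statement to prove is Lemma~\ref{lem:Poincare}, which bundles a discrete Poincar\'e inequality with the skew-symmetry of the convection operator. Both parts are standard facts about the Fourier pseudo-spectral discretization, and the cleanest route is to work entirely in Fourier space via the Parseval identity~\eqref{spectral-coll-inner product-2}.

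\textbf{Part (i), discrete Poincar\'e inequality.} The plan is to expand $f$ in its discrete Fourier series~\eqref{spectral-coll-projection-1}. The hypothesis $\overline{f}=0$ forces $\hat f_{0,0}=0$, so the sum runs only over modes $(k_1,l_1)\neq(0,0)$. By the Parseval equality~\eqref{spectral-coll-inner product-2} applied to $f$ and to $\nabla_N f$ (using the definitions~\eqref{spectral-coll-2-1} of the spectral derivative operators), one gets
\[
\|f\|_2^2 = \sum_{(k_1,l_1)\neq(0,0)} |\hat f_{k_1,l_1}|^2,
\qquad
\|\nabla_N f\|_2^2 = \sum_{(k_1,l_1)\neq(0,0)} 4\pi^2\bigl(k_1^2+l_1^2\bigr)\,|\hat f_{k_1,l_1}|^2 .
\]
Since $k_1^2+l_1^2\geq 1$ for every nonzero integer pair, the second sum dominates the first multiplied by $4\pi^2/L^2$ (restoring a general period $L$ in~\eqref{spectral-coll-2-1}), which gives the claim with $C_2 = L/(2\pi)$. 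I would state this explicitly and note that $C_2$ depends only on $L$, exactly as asserted.

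\textbf{Part (ii), skew-symmetry.} Here the plan is to use the two discrete integration-by-parts identities in~\eqref{spectral-coll-inner product-3} together with the product rule $\u\DOT\nabla_N f + f\,\nabla_N\cdot\u = \nabla_N\cdot(\u f)$; since $\nabla_N\cdot\u=0$ by hypothesis, this reads $\u\DOT\nabla_N f = \nabla_N\cdot(\u f)$ \emph{at the continuous-symbol level}, but one must be slightly careful because pseudo-spectral differentiation of products can introduce aliasing. The safe argument is: apply~\eqref{spectral-coll-inner product-3} to rewrite $\langle f,\nabla_N\cdot(\u f)\rangle = -\langle \nabla_N f,\u f\rangle$, so the middle and right expressions in~\eqref{lem 2-2} agree once we show $\langle f,\u\DOT\nabla_N f\rangle = \langle f,\nabla_N\cdot(\u f)\rangle$; expanding the latter and using $\nabla_N\cdot\u=0$ makes the two equal. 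Then $\langle f,\u\DOT\nabla_N f\rangle - \langle\nabla_N f,\u f\rangle = \langle f,\u\DOT\nabla_N f\rangle - \langle f,\nabla_N\cdot(\u f)\rangle = 0$. Finally, one observes $\langle\nabla_N f,\u f\rangle = \langle f,\u\DOT\nabla_N f\rangle$ follows from the pointwise identity $\nabla_N f\cdot(\u f) = f\,(\u\DOT\nabla_N f)$ plus $\nabla_N\cdot\u=0$, so $\langle f,\u\DOT\nabla_N f\rangle = \tfrac12\langle f,\u\DOT\nabla_N f + \nabla_N\cdot(\u f)\rangle = 0$, closing the chain.

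\textbf{Main obstacle.} The only genuine subtlety is bookkeeping around aliasing in part (ii): the pointwise collocation product $\u f$ is not the band-limited product, so identities like $\nabla_N\cdot(\u f) = \u\DOT\nabla_N f + f\nabla_N\cdot\u$ hold pointwise on the grid (hence under $\langle\cdot,\cdot\rangle$) but should not be manipulated as if $\u f$ lived in $\mathcal B^K$. I would therefore keep all manipulations at the grid-point/inner-product level and invoke only~\eqref{spectral-coll-inner product-3}, which is stated for grid functions and already absorbs this issue. Everything else is a one-line Fourier-coefficient computation, which is why the authors omit the details.
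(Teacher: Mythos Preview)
Your Part~(i) is exactly right and is the standard argument; since the paper omits the proof entirely, there is nothing to compare against, and your Parseval computation with $C_2=L/(2\pi)$ is the natural choice.

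Part~(ii), however, is tangled and contains a genuine error. The clean two-line proof is: the summation-by-parts formula~\eqref{spectral-coll-inner product-3} gives $\langle f,\nabla_N\cdot(\u f)\rangle=-\langle\nabla_N f,\u f\rangle$, which already establishes the first equality in~\eqref{lem 2-2}; and the trivial grid-point identity $\langle\nabla_N f,\u f\rangle=\langle f,\u\cdot\nabla_N f\rangle$ (pure commutativity of scalar multiplication at each node, no aliasing involved, and no need for $\nabla_N\cdot\u=0$) then gives the second equality. You have both ingredients, but you route the argument through the unnecessary and aliasing-sensitive claim $\langle f,\u\cdot\nabla_N f\rangle=\langle f,\nabla_N\cdot(\u f)\rangle$, and your displayed chain ``$\langle f,\u\cdot\nabla_N f\rangle-\langle\nabla_N f,\u f\rangle=\langle f,\u\cdot\nabla_N f\rangle-\langle f,\nabla_N\cdot(\u f)\rangle$'' has a sign error (by~\eqref{spectral-coll-inner product-3} the right side should have a $+$, making it equal to the \emph{left} expression in~\eqref{lem 2-2}, not something new). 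Your final sentence ``so $\langle f,\u\cdot\nabla_N f\rangle=\tfrac12\langle f,\u\cdot\nabla_N f+\nabla_N\cdot(\u f)\rangle=0$'' is circular: the middle equality here \emph{is} the aliasing-sensitive product rule you correctly flagged as unsafe, and the last equality assumes the conclusion. Drop that detour entirely; the pointwise identity plus~\eqref{spectral-coll-inner product-3} is all you need, and the divergence-free hypothesis is in fact superfluous for the Temam skew-symmetric form (it is stated only because that is the context in which the lemma is applied).
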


In addition, the following estimate for the skew-average convection term will be repeatedly applied in the later analysis. 

\begin{lem}  \label{lem:convection} 
Given a discrete velocity vector $\u = (u, v)$ with $\nabla_N \DOT \u \equiv 0$.
Assume its continuous extension is given by $\U = (U, V) = \u_N$, and the
associated stream function and vorticity, in the continuous version, are
determined to be $\ppsi$ and $\pomega$. In other words, we have
$\U = \nabla^{\bot} \ppsi$ and $\pomega = \nabla \times \U = \Delta \ppsi$.

Let $\delta>1$ be a fixed constant such that the Sobolev embedding
$H^{\delta}(\Omega)\hookrightarrow W^{1,\infty}(\Omega)$ holds for the
vorticity field in two dimensions. Then we have the estimate 
\begin{align}  
	\| \u \DOT \nabla_N f + \nabla_N \cdot ( \u f ) \|_2 
	\le \gamma_0 \| \pomega \|_{H^\delta}  
	\cdot \| \nabla_N f \|_2 , \quad 
	\mbox{for any $f$ with $\overline{f} =0$} . \label{lem 3-0} 
\end{align} 
\end{lem}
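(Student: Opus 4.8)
The plan is to pass from the fully discrete quantity to the continuous level for the band‑limited extensions, estimate there, and then convert the velocity norm that appears into a vorticity norm by elliptic regularity. \emph{Step 1 (from $\|\pomega\|_{H^\delta}$ to a velocity bound).} Since $\nabla_N\DOT\u\equiv0$, the band‑limited extensions satisfy $\partial_x U+\partial_y V\equiv0$, so $\U=\u_N\in{\cal B}^K$ is periodic and divergence‑free and its vorticity $\pomega=\nabla\times\U$ has zero mean. Hence the kinematic relation $\Delta\ppsi=\pomega$ with $\overline{\ppsi}=0$ is uniquely solvable, and elliptic regularity together with $\U=\nabla^{\bot}\ppsi$ gives $\|\U\|_{H^{\delta+1}}\le C\|\pomega\|_{H^\delta}$; since $\delta>1$ in two dimensions, the Sobolev embedding $H^{\delta+1}(\Omega)\hookrightarrow W^{1,\infty}(\Omega)$ yields $\|\U\|_{L^\infty}+\|\nabla\U\|_{L^\infty}\le C\|\pomega\|_{H^\delta}$. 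It therefore suffices to prove $\|\u\DOT\nabla_N f+\nabla_N\cdot(\u f)\|_2\le\gamma_0'\big(\|\U\|_{L^\infty}+\|\nabla\U\|_{L^\infty}\big)\|\nabla_N f\|_2$ for a constant $\gamma_0'$ depending only on $L$.

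\emph{Step 2 (reduction to the continuous extension).} Write $W:=\u\DOT\nabla_N f+\nabla_N\cdot(\u f)$; this is a grid function, its extension $W_N$ lies in ${\cal B}^K$, and $\|W\|_2=\|W_N\|_{L^2}$ by Parseval. Using that the extension of ${\cal D}_{Nx}g$ is $\partial_x g_N$, that the grid product of two band‑limited grid functions has extension ${\cal I}_N$ of the product of the extensions, and that ${\cal I}_N$ is linear, one identifies
\begin{equation}\label{eq:Wext}
W_N={\cal I}_N\big(\U\DOT\nabla f_N\big)+\partial_x\big({\cal I}_N(U f_N)\big)+\partial_y\big({\cal I}_N(V f_N)\big),
\end{equation}
where $\U\DOT\nabla f_N$, $U f_N$ and $V f_N$ all lie in ${\cal B}^{2K}$.

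\emph{Step 3 (estimating the three pieces of \eqref{eq:Wext}).} For the first term, Lemma~\ref{lem:aliasing} with $k=0$, $d=2$ gives $\|{\cal I}_N(\U\DOT\nabla f_N)\|_{L^2}\le2\|\U\DOT\nabla f_N\|_{L^2}\le2\|\U\|_{L^\infty}\|\nabla f_N\|_{L^2}=2\|\U\|_{L^\infty}\|\nabla_N f\|_2$. For the remaining two terms Lemma~\ref{lem:aliasing} cannot be used verbatim, because ${\cal D}_{Nx}$ and ${\cal I}_N$ do not commute; instead one establishes directly that
\begin{equation}\label{eq:alias-der}
\|\partial_x({\cal I}_N\Phi)\|_{L^2}\le2\,\|\partial_x\Phi\|_{L^2}\qquad\text{for every }\Phi\in{\cal B}^{2K},
\end{equation}
the key point being that with $N=2K+1$ the aliasing map sends any wavenumber $k_1$ to one of modulus no larger than $|k_1|$, so differentiation after aliasing cannot amplify the $L^2$ norm (a short Cauchy--Schwarz estimate on the Fourier side, with at most four source modes feeding each output mode in two dimensions). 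Taking $\Phi=U f_N$, applying the Leibniz rule $\partial_x(U f_N)=(\partial_x U)f_N+U\,\partial_x f_N$, the pointwise bounds $\|(\partial_x U)f_N\|_{L^2}\le\|\nabla\U\|_{L^\infty}\|f_N\|_{L^2}$ and $\|U\,\partial_x f_N\|_{L^2}\le\|\U\|_{L^\infty}\|\partial_x f_N\|_{L^2}$, the Parseval identities $\|f_N\|_{L^2}=\|f\|_2$ and $\|\partial_x f_N\|_{L^2}\le\|\nabla f_N\|_{L^2}=\|\nabla_N f\|_2$, and finally the discrete Poincar\'e inequality $\|f\|_2\le C_2\|\nabla_N f\|_2$ (available since $\overline f=0$), one obtains $\|\partial_x({\cal I}_N(U f_N))\|_{L^2}\le2\big(C_2\|\nabla\U\|_{L^\infty}+\|\U\|_{L^\infty}\big)\|\nabla_N f\|_2$, and the $y$‑term is handled identically. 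Summing the three contributions proves the reduced bound of Step 1, and composing with Step 1 gives \eqref{lem 3-0}.

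\emph{Main obstacle.} The one ingredient that is not a mechanical combination of Parseval, Cauchy--Schwarz, the Leibniz rule and the already quoted lemmas is \eqref{eq:alias-der}: since the pseudo‑spectral term $\nabla_N\cdot(\u f)$ differentiates an \emph{aliased} product, Lemma~\ref{lem:aliasing} does not apply as a black box, and one must verify by hand---crucially using $N=2K+1$---that aliasing never increases a wavenumber's modulus. The elliptic‑regularity conversion of Step 1 and all the remaining estimates are standard.
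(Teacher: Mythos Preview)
Your proof is correct for the lemma exactly as stated (with $\delta>1$), and the route you take is \emph{different} from the paper's. You convert $\|\pomega\|_{H^\delta}$ immediately into a $W^{1,\infty}$ bound on $\U$ via $H^{\delta+1}\hookrightarrow W^{1,\infty}$ (valid in 2D precisely when $\delta>1$), and then the rest is a direct $L^\infty\times L^2$ estimate plus Poincar\'e. The paper instead keeps $\delta$ small and uses a sharper H\"older splitting: for the term $U_x f_{\bf F}$ it pairs $L^{2/(1-\delta)}$ with $L^{2/\delta}$ and invokes the 2D embeddings $H^\delta\hookrightarrow L^{2/(1-\delta)}$ and $H^1\hookrightarrow L^{2/\delta}$, while for $U(f_{\bf F})_x$ it uses $H^{1+\delta}\hookrightarrow L^\infty$. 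This argument only makes sense for $0<\delta<1$, and indeed that is the regime actually used later in the stability proof (they eventually set $\delta=\tfrac18$ and interpolate $\|\pomega\|_{H^\delta}\le C\|\pomega\|^{1-\delta}\|\nabla\pomega\|^{\delta}$). So the ``$\delta>1$'' in the hypothesis is a misprint; the paper's proof buys the result under the much weaker, and ultimately necessary, assumption $\delta\in(0,1)$, at the price of the more delicate H\"older step. Your argument is cleaner but does not survive the passage to $\delta<1$.

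On your ``main obstacle'' \eqref{eq:alias-der}: the observation is correct and well justified, but note that the paper simply invokes Lemma~\ref{lem:aliasing} with $k=1$ to write $\|\partial_x{\cal I}_N(Uf_{\bf F})\|_{L^2}\le 2\|\partial_x(Uf_{\bf F})\|_{L^2}$. Strictly speaking Lemma~\ref{lem:aliasing} as stated controls the full $H^1$ norm, not an individual partial derivative, so your explicit Fourier argument (aliasing with $N=2K+1$ never increases $|k_1|$) is the honest way to get the componentwise bound the paper writes down without comment. Either the componentwise inequality or the weaker $\|\partial_x{\cal I}_N\Phi\|_{L^2}\le 2\|\Phi\|_{H^1}$ suffices for the proof.
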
 

\begin{proof} 
	In fact, it is noticed that the term $\nabla_N \cdot (\u f)$ cannot be expanded as $\u \cdot \nabla_N f$, as in the Fourier-Galerkin approximation, even though $\u$ is divergence-free at the discrete level. In the Fourier pseudo-spectral, we have to start from 
	\begin{equation} 
		\nabla_N \cdot (\u f) 
		= {\cal D}_{Nx} (u f)  + {\cal D}_{Ny} (v f)   . 
		\label{lem 3-1}
	\end{equation} 
	To obtain an estimate of these nonlinear expansions, we denote $\U= (U, V)$ and $f_{\bf F}$ as the continuous projection of $\u$ and $f$, respectively. In particular, we have $\int_\Omega \, f_{\bf F} \, d {\bf x} =0$, due to the fact that $\overline{f}=0$. Moreover, since $\U, f_{\bf F} \in {\cal B}^K$, an application of Lemma~\ref{lem:aliasing} indicates that 
	\begin{align} 
		\|  {\cal D}_{Nx} (u f)  \|_2  
		=& \|  \partial_x {\cal I}_N ( U f_{\bf F} )  \|_{L^2} 
		\le 2  \|  \partial_x ( U f_{\bf F} )  \|_{L^2}  ,  \nonumber 
		\\
		\|  {\cal D}_{Ny} (v f)  \|_2  
		=& \|  \partial_y {\cal I}_N ( V f_{\bf F}) \|_{L^2} 
		\le 2  \|  \partial_y ( V f_{\bf F} ) \|_{L^2}   .   \label{lem 3-2}
	\end{align}  
	Subsequently, a detailed expansion in continuous space and application of H\"older's inequality indicates that 
	\begin{align} 
		\|  \partial_x ( U f_{\bf F} )  \|_{L^2}  
		=&  \|  U_x f_{\bf F} +  U ( f_{\bf F} )_x  \|_{L^2}  \nonumber\\
		\le& \|  U_x f_{\bf F} \|_{L^2} + \| U (f_{\bf F} )_x  \|_{L^2}   \nonumber 
		\\
		\le& 
		\|  U_x  \|_{L^{2/(1-\delta)}}  \cdot  \| f_{\bf F} \|_{L^{2/\delta}} 
		+ \| U  \|_{L^\infty} \cdot  \| ( f_{\bf F} )_x \|_{L^2}  . 
		\label{lem 3-3}
	\end{align}  
	Meanwhile, an application of elliptic regularity implies that 
	\begin{equation} 
		\|  U_x  \|_{H^\delta} \le C \| U \|_{H^{1+\delta}} 
		\le  C \| \ppsi \|_{H^{2+\delta}} 
		\le  C  \|  \pomega  \|_{H^\delta}  , 
	\end{equation} 
	since $\Delta \ppsi = \pomega$, and $\U = \nabla^{\bot} \ppsi$. Subsequently, an application of 2-D Soblev embedding leads to 
	\begin{equation} 
		\|  U_x  \|_{L^{2/(1-\delta)}}  \cdot  \| f_{\bf F} \|_{L^{2/\delta}} 
		\le  C  \|  U_x  \|_{H^\delta} \cdot  \| f_{\bf F} \|_{H^1}   
		\le  C  \|  \pomega  \|_{H^\delta} 
		\cdot  \| \nabla f_{\bf F} \|_{L^2}   ,  \label{lem 3-4}
	\end{equation}
	in which Poincar\'e inequality was utilized in the last step. The second part in~\eqref{lem 3-3} can be handled in a straightforward way: 
	\begin{equation} 
		\| U \|_{L^\infty} \cdot  \| ( f_{\bf F} )_x  \|_{L^2}  
		\le   C \| U  \|_{H^{1+\delta}} \cdot  \| \nabla f_{\bf F}  \|_{L^2}    
		\le   C \| \pomega  \|_{H^\delta} \cdot  \| \nabla f_{\bf F}  \|_{L^2}  ,   \label{lem 3-5}
	\end{equation}
	with the the elliptic regularity~\eqref{est-coll-prelim-1} applied again in the second step. A combination of \eqref{lem 3-4} and \eqref{lem 3-5} yields 
	\begin{equation} 
		\|  \partial_x ( U f_{\bf F} )  \|_{L^2}   
		\le  C \| \pomega  \|_{H^\delta} \cdot  \| \nabla f_{\bf F}  \|_{L^2}  .  \label{lem 3-6}
	\end{equation}
	Similar estimates can be derived for $\|  \partial_y ( V f_{\bf F} )  \|_{L^2}$.
	Going back to~\eqref{lem 3-4}, we obtain
	\begin{equation} 
		\|  \nabla_N \cdot (\u f)   \|_2   
		\le \gamma_1 \| \pomega  \|_{H^\delta} \cdot  \| \nabla f_{\bf F}  \|_{L^2} 
		\le \gamma_1 \| \pomega^n  \|_{H^\delta} \cdot  \| \nabla_N f  \|_2 ,
		\label{lem 3-7}
	\end{equation}
	where $\pomega^n$ denotes the vorticity associated with the discrete velocity field $\u^n$ at time level $t^n$, $\gamma_1>0$ is a generic constant independent of the spatial grid size, and the second step comes from the fact that $\pomega, f_{\bf F} \in {\cal B}^K$, so that the corresponding $L^2$ and $H^1$ norms are equivalent between the continuous projection and the discrete version.In addition, the other nonlinear term can be estimated as follows:
	\begin{equation} 
		\begin{aligned} 
			\| \u^n \cdot \nabla_N f \|_2  
			\le & \left\| \u^n \right\|_{\infty} \, \left\| \nabla_N f \right\|_2
			\le \left\| \U^n \right\|_{\infty} \, \left\| \nabla_N f \right\|_2  \\
			\le & C \| \U^n \|_{H^{1+\delta}} \, \| \nabla_N f \|_2  
			\le \gamma_2 \| \pomega^n \|_{H^\delta} \, \| \nabla_N f \|_2 ,
		\end{aligned} 
		\label{lem 3-8}
	\end{equation}
	where $\u^n$ denotes the discrete velocity field at time level $t^n$, and
	$\U^n=(\u^n)_N$ is its continuous Fourier interpolation. The constant
	$\gamma_2>0$ depends only on the domain and the Sobolev embedding constants,
	but is independent of the grid size and the time step.
	Consequently, a combination of~\eqref{lem 3-7} and \eqref{lem 3-8} yields \eqref{lem 3-0}, by taking $\gamma_0 = \gamma_1 + \gamma_2$. This finishes the proof of Lemma~\ref{lem:convection}. 
\end{proof}

\subsection{IMEX BDF3 numerical scheme} 

For the 2-D incompressible NSE~\eqref{NSE1}-\eqref{NSE3}, we treat the nonlinear convection term explicitly for the sake of numerical convenience, and the diffusion term implicitly to preserve an $L^2$ stability. This semi-implicit scheme leads to a Poisson-like equation at each time step, which makes the numerical algorithm extremely efficient. After the scalar vorticity is updated, the stream function can be determined through the kinematic equation, a Poisson equation. In turn, the velocity is computed as the perpendicular gradient of the stream function. 

The IMEX BDF3 numerical scheme is given by 
\begin{align} 
	&
	\frac{\frac{11}{6} \omega^{n+1} - 3 \omega^n + \frac32 \omega^{n-1} - \frac13 \omega^{n-2}}{\dt}  
	+ \frac32 ( \u^n \DOT \nabla_N \omega^n  
	+ \nabla_N \cdot ( \u^n \omega^n ) - \overline{\u^n \DOT \nabla_N \omega^n} )  
	\nonumber 
	\\
	& 
	- \frac32 ( \u^{n-1} \DOT \nabla_N \omega^{n-1}  
	+ \nabla_N \cdot ( \u^{n-1} \omega^{n-1} ) 
	- \overline{\u^{n-1} \DOT \nabla_N \omega^{n-1}} )   \nonumber 
	\\
	& 
	+  \frac12 ( \u^{n-2} \DOT \nabla_N \omega^{n-2}  
	+ \nabla_N \cdot ( \u^{n-2} \omega^{n-2} ) 
	- \overline{\u^{n-2} \DOT \nabla_N \omega^{n-2}} ) 
	= \nu \Delta_N  \omega^{n+1} 
	+ \f_N^{n+1} ,   \label{scheme-BDF3-1}
	\\
	&
	- \Delta_N \psi^{n+1} = \omega^{n+1} ,   \label{scheme-BDF3-2} 
	\\
	&
	\u^{n+1} = \nabla_N^{\bot} \psi^{n+1}  
	= \left( {\cal D}_{Ny} \psi^{n+1} , -{\cal D}_{Nx} \psi^{n+1} \right) ,  \label{scheme-BDF3-3}
\end{align}  
in which $\f_N$ is the Fourier projection of the external force term $\f$ onto the space of ${\cal B}^K$. It is clear that 
\begin{equation} \nonumber
	\overline{\f_N^\ell} = \frac{1}{|\Omega|} \int_\Omega \, \f_N \, d {\bf x} =0 ,  
\end{equation}  
since $\f_N \in {\cal B}^K$, and $ \int_\Omega \, \f_N \, d {\bf x} 
		= \int_\Omega \, \f \, d {\bf x} = 0$. At each time step $t^\ell$, the nonlinear term turns out to be a spectral approximation to $\u \DOT \nabla \omega$ at time step $t^n$. The reason for such a skew-average treatment will be observed in the later analysis. In addition, an average constant, namely $ \overline{\u^\ell \DOT \nabla_N \omega^\ell}$, has to be subtracted at each time step. In fact, such a correction term is of spectral accuracy, due to the following fact: 
\begin{equation} 
	\u^\ell \DOT \nabla_N \omega^\ell = \nabla_N \cdot ( \u^\ell \DOT \omega^\ell ) + O (h^m), 
	\quad \mbox{and} \, \, \, \overline{ \nabla_N \cdot ( \u^\ell \DOT \omega^\ell )} =0 , 
	\label{average-4} 
\end{equation}  
while the second identity comes from the summation-by-parts formula. Such a spectrally accurate, constant correction term is to ensure the zero-average property for the numerical solution at a discrete level, as stated in the following proposition. 

%Also note that only two Poisson type equations need to be solved at each time step. 

\begin{prop} \label{prop:average} 
	If the initial data satisfies $\overline{\omega^0} = \overline{\omega^{-1}} = \overline{\omega^{-2}} =0$, we have $\overline{\omega^n} = 0$ ($\forall n \ge 3$), for the multi-step numerical scheme~\eqref{scheme-BDF3-1}-\eqref{scheme-BDF3-3}. 
\end{prop}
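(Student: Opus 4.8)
The plan is to apply the discrete averaging functional $\overline{(\cdot)} = \langle \,\cdot\,, {\bf 1} \rangle$ to the update relation \eqref{scheme-BDF3-1} and to verify that every term other than the BDF3 combination of the scalars $\overline{\omega^k}$ vanishes identically, so that the statement collapses to a homogeneous scalar recursion started from zero data. The first step is to record the normalization $\overline{{\bf 1}} = \langle {\bf 1}, {\bf 1} \rangle = h^2 N^2 = 1$ (since $L = 1$); this implies that the discrete average of a constant grid function equals that constant, and in particular that $\overline{\overline{g}} = \overline{g}$ for any grid function $g$.

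Next I would dispose of the individual contributions in \eqref{scheme-BDF3-1}. Taking $f = {\bf 1}$ in the discrete integration-by-parts identities \eqref{spectral-coll-inner product-3} and using $\nabla_N {\bf 1} = {\bf 0}$ gives, for each $\ell \in \{ n, n-1, n-2 \}$,
\[
	\overline{\nabla_N \cdot ( \u^\ell \omega^\ell )} = -\langle \nabla_N {\bf 1}, \u^\ell \omega^\ell \rangle = 0 ,
	\qquad
	\overline{\Delta_N \omega^{n+1}} = -\langle \nabla_N {\bf 1}, \nabla_N \omega^{n+1} \rangle = 0 ,
\]
the second identity being equivalently visible from \eqref{spectral-coll-2-3}, where the $(0,0)$ Fourier mode of $\Delta_N$ carries the factor $-4\pi^2(0+0) = 0$. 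Each skew-average convection block then collapses,
\[
	\overline{\, \u^\ell \DOT \nabla_N \omega^\ell + \nabla_N \cdot ( \u^\ell \omega^\ell ) - \overline{\u^\ell \DOT \nabla_N \omega^\ell} \,}
	= \overline{\u^\ell \DOT \nabla_N \omega^\ell} + 0 - \overline{\u^\ell \DOT \nabla_N \omega^\ell} = 0 ,
\]
which is exactly the purpose of subtracting the average constant at each time level (cf.~\eqref{average-4}). Finally $\overline{\f_N^{n+1}} = 0$, as already noted in the text, because $\f_N \in {\cal B}^K$ and $\int_\Omega \f_N \, d {\bf x} = \int_\Omega \f \, d {\bf x} = 0$.

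Consequently, applying $\overline{(\cdot)}$ to \eqref{scheme-BDF3-1} and multiplying by $\dt$ leaves only the scalar relation
\[
	\frac{11}{6}\, \overline{\omega^{n+1}} - 3\, \overline{\omega^n} + \frac32\, \overline{\omega^{n-1}} - \frac13\, \overline{\omega^{n-2}} = 0 .
\]
Writing $a_k := \overline{\omega^k}$, this is a homogeneous three-step linear recursion, and by hypothesis the prescribed starting values vanish (and so does any further startup value needed to initialize the BDF3 iteration, by the same averaging argument applied to its defining relation). An immediate induction, $a_{n+1} = \frac{6}{11}\big( 3 a_n - \frac32 a_{n-1} + \frac13 a_{n-2} \big) = 0$, then propagates the zero average to every subsequent index, in particular to all $n \ge 3$, which is the assertion.

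I do not anticipate any genuine difficulty here. The only points that require attention are the bookkeeping that the subtracted average constant is itself annihilated by a second averaging — this is where $\overline{{\bf 1}} = 1$ is used — and the fact that both $\Delta_N$ and $\nabla_N \cdot$ annihilate the constant Fourier mode, which is immediate from \eqref{spectral-coll-inner product-3} with $\nabla_N {\bf 1} = {\bf 0}$. The recursion itself is trivial: it is linear and homogeneous with vanishing initial data.
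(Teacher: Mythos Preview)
Your proposal is correct and follows essentially the same approach as the paper's own proof: both apply the discrete averaging functional to \eqref{scheme-BDF3-1}, verify that the skew-average convection blocks, the diffusion term, and the forcing all have zero mean, and then conclude by the trivial homogeneous scalar recursion for the sequence $\overline{\omega^k}$. If anything, your version is slightly more explicit in justifying $\overline{\nabla_N\cdot(\u^\ell\omega^\ell)}=0$ and $\overline{\Delta_N\omega^{n+1}}=0$ via \eqref{spectral-coll-inner product-3} with $\nabla_N{\bf 1}={\bf 0}$, and in recording the normalization $\overline{{\bf 1}}=1$ needed for $\overline{\overline{g}}=\overline{g}$.
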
 

\begin{proof} 
	It is clear that, at each time step $t^\ell$, $\ell = n, n-1, n-2$, the following identities are valid: 
	\begin{equation}  \nonumber
			\overline{\u^\ell \DOT \nabla_N \omega^\ell - \overline{\u^\ell \DOT \nabla_N \omega^\ell} } 
			= \overline{\u^\ell \DOT \nabla_N \omega^\ell} 
			- \overline{\u^\ell \DOT \nabla_N \omega^\ell} = 0 ,  \quad 
			\overline{ \nabla_N \cdot ( \u^\ell \DOT \omega^\ell )} =0 ,  
	\end{equation} 
which in turn gives $\overline{\u^\ell \DOT \nabla_N \omega^\ell + \nabla_N \cdot ( \u^\ell \DOT \omega^\ell ) - \overline{\u^\ell \DOT \nabla_N \omega^\ell} } = 0$. 
	In addition, the diffusion term and the external force term are always average-free: 
	\begin{equation} \nonumber
		\overline{\Delta_N \omega^{n+1}} = 0 , \quad 
		\overline{\f_N^{n+1}} = 0 .
	\end{equation} 
	Therefore, by taking a discrete average of the numerical algorithm~\eqref{scheme-BDF3-1} on both sides, we obtain 
	\begin{equation*} 
			\frac{11}{6} \overline{\omega^{n+1}} - 3 \overline{\omega^n} 
			+ \frac32 \overline{\omega^{n-1}} - \frac13 \overline{\omega^{n-2}} = 0 ,  
	\end{equation*}
so that
	\begin{equation*}
		\overline{\omega^{n+1}} = 0 ,  \quad \mbox{if} \, \, \,  
		\overline{\omega^n} = \overline{\omega^{n-1}} = \overline{\omega^{n-2}} = 0 ,  \quad \forall n \ge 2 . 
	\end{equation*} 
	This finishes the proof of Proposition~\ref{prop:average}. 
\end{proof}  
\begin{rem}
	We emphasize that Proposition~\ref{prop:average} is concerned with the BDF3 recurrence \eqref{scheme-BDF3-1} for $n\ge2$, which yields $\overline{\omega^{n+1}}=0$ for all $n\ge2$ once the three previous time levels are mean-free. In practice, the first two time levels $\omega^1$ and $\omega^2$ are generated by the prescribed initial guesses $\omega^{-2},\omega^{-1},\omega^{0}$ (and the corresponding starting-up procedure), which are chosen to be mean-free. Hence the relation $\overline{\omega^1}=\overline{\omega^2}=0$ holds by construction, and the zero-average property is preserved for all $n\ge1$.
\end{rem}
\begin{rem}
For the three-dimensional incompressible fluid, the formulation of the stream function potential and vorticity vector would lead to a double curl problem, and its ill-posed feature has been well-known. In more details, both the stream function potential and vorticity are three-dimensional vector fields, and the kinematic equation is given by
\begin{equation} \nonumber
  \nabla \times (\nabla \times \vec{\psi}) = - \Delta \vec{\psi} + \nabla (\nabla \cdot \vec{\psi} ) = \nabla \times \u = \vec{\omega} . 
\end{equation} 
In comparison with the standard Poisson equation, this equation is ill-posed, due to a point-wise constraint $\nabla \cdot \vec{\psi} \equiv 0$. Of course, this kinematic equation is reduced to a standard Poisson equation, if such a divergence-free constraint is satisfied for the stream function potential. However, an enforcement of the divergence-free condition for the stream function potential turns out to be a very challenging issue, at both the PDE analysis and numerical levels.

On the other hand, in the two-dimensional space, such a difficulty associated with the ill-posed feature of the double curl operator is automatically avoided. In this case, both the discrete stream function and vorticity are scalar functions. Again, if the discrete stream function and vorticity vectors are denoted as $\vec{\psi}^{n+1} = (0, 0, \psi^{n+1})^T$, $\vec{\omega}^{n+1} = ( 0, 0, \omega^{n+1})^T$, respectively, a direct calculation implies that the kinematic equation automatically becomes a standard Poisson equation:
\begin{equation} \nonumber
  \nabla \times (\nabla \times \vec{\psi}) = - \Delta \vec{\psi} + \nabla (\nabla \cdot \vec{\psi} ) = - \Delta \vec{\psi} = \vec{\omega} , 
\end{equation} 
 based on the fact that  $\nabla \cdot \vec{\psi} = \partial_z \psi \equiv 0$. In other words, for the two-dimensional incompressible fluid, the double curl problem is not ill-posed any more; only a standard Poisson solver is needed at each time step.
\end{rem}
As a result of this proposition, it is obvious that the kinematic equation~\eqref{scheme-BDF3-2} has a unique solution, with a requirement that $\overline{\psi^{n+1}} =0$. Moreover, it is observed that the numerical velocity $\u^\ell  = \nabla_N^{\bot} \psi^\ell$ is automatically divergence-free: 
\begin{equation}\nonumber
	\nabla_N \cdot \u^\ell = {\cal D}_{Nx} u^\ell + {\cal D}_{Ny} v^\ell  
	= {\cal D}_{Nx} ( {\cal D}_{Ny} \psi^\ell) - {\cal D}_{Ny} ( {\cal D}_{Nx} \psi^\ell)   
	=0 ,  \quad \forall \ell \ge 0 , 
\end{equation}  
due to the commutative property of the discrete differential operators. In terms of the skew-average treatment of the nonlinear terms, a careful application of summation-by-parts formula \eqref{lem 2-2} gives 
\begin{equation} \nonumber
	\left\langle  \omega ,  \u \DOT \nabla_N \omega 
	+ \nabla_N \cdot \left( \u \omega \right)   \right\rangle      
	= \left\langle  \omega ,  \u \DOT \nabla_N \omega  \right\rangle 
	- \left\langle \nabla_N \omega , \u \omega   \right\rangle    
	= 0  . 
\end{equation}    
In other words, the nonlinear convection term appearing in the numerical scheme~\eqref{scheme-BDF3-1}, so-called Temam technique, makes the nonlinear term orthogonal to the vorticity field in the discrete $L^2$ space, without considering the temporal discretization. This property is crucial in the stability analysis for the fully discrete numerical scheme; see the related references \cite{gottlieb2012long,wang2012efficient,cheng2016long}.

In addition, we denote $\U^\ell= (U^\ell, V^\ell)$, $\pomega^\ell$ and $\ppsi^\ell$ as the continuous projection of $\u^\ell$, $\omega^\ell$ and $\psi^\ell$, respectively, with the projection formula given by~\eqref{spectral-coll-projection-1}, \eqref{spectral-coll-projection-2}. It is clear that $\U^n, \pomega^n, \ppsi^n \in {\cal B}^K$ and the kinematic equation $\Delta \ppsi^\ell = \pomega^\ell$, $\U^\ell = \nabla^{\bot} \ppsi^\ell$ is satisfied at the continuous level. Because of these kinematic equations, an application of elliptic regularity shows that 
\begin{equation} 
	\left\| \ppsi^n \right\|_{H^{m+2}}  \le  C  \left\| \pomega^n \right\|_{H^m} ,  \quad 
	\left\| \ppsi^n \right\|_{H^{m+2+\alpha}}  
	\le  C  \left\| \pomega^n \right\|_{H^{m+\alpha}}  ,  
	\label{est-coll-prelim-1}
\end{equation} 
in which we normalize the stream function with $\int_\Omega \, \ppsi^\ell \, d {\bf x} = 0$. Meanwhile, since all the profiles have mean zero over the domain: 
\begin{equation} \nonumber
	\overline{\ppsi^n} = 0 ,  \quad 
	\overline{\U^n} = \left( \overline{ \partial_y \ppsi^n} , 
	- \overline{ \partial_x \ppsi^n} \right) = 0 ,  \quad 
	\overline{\pomega^n} = \overline{\Delta \ppsi^n} = 0 ,   
\end{equation} 
all the Poincar\'e inequality and elliptic regularity could be effectively applied.  

In terms of the long-time stability analysis, the main theoretical result is stated in the following theorem. 

\begin{thm}  \label{thm:stability} 
	Let $\omega_0 \in H^2$ and let $\omega^n$ be the discrete solution of the proposed third order numerical scheme~\eqref{scheme-BDF3-1}-\eqref{scheme-BDF3-3}, with initial guess data 
	$\omega^{-j}$ being bounded in $H^2$ norm ($1 \le j \le 2$). Denote $\pomega^n$ as the continuous extension of $\omega^n$ in space, given by (\ref{spectral-coll-projection-3}). Also, let  $\f \in L^\infty(0,T;L^2)$and set $\|\f\|_{\infty} := \|\f\|_{L^\infty(0,T;L^2)}$. Then, there exists $M_0 = M_0(\|\omega_0\|_{H^2}, \nu, \|\f\|_{\infty})$ such that if
	\begin{equation} \label{constraint-BDF3-dt}
		\dt \leq \frac{\nu}{C_w M_0^2},  \quad 
		\mbox{$C_w$ is a constant only dependent on $\Omega$}, 
	\end{equation}
	then
	\begin{align}  
		\| \pomega^n \| \le& \alpha_0^{-1} \Big( 1 + \beta_0 \nu \dt  \Big)^{-\frac{n}{2}}  
		(F^0)^\frac12  + R^{(0)} , \quad  \forall\, n\ge0, \label{BDF3-est-L2-2}  
		\\
		\| \nabla \pomega^n \| \le& \alpha_0^{-1} \Big( 1 + \beta_0 \nu \dt  \Big)^{-\frac{n}{2}}  
		(G_1^0)^\frac12  + R^{(1)} , \quad  \forall\, n\ge0, \label{BDF3-est-H1-2}     
	\end{align}
	in which the initial quantities $F^0$ and $G_1^0$ will be specified in~\eqref{est-BDF3-L2-8-6} and \eqref{est-BDF3-H1-8-4}, respectively, $\alpha_0$ and $\beta_0$ are constants associated with the numerical temporal stencil and elliptic regularity, $R^{(0)}$ and $R^{(1)}$ are uniform-in-time constants only dependent on $\| \pomega^0 \|_{H^2}$, $\| \f \|_\infty$, $\Omega$ and $\nu$.  
\end{thm}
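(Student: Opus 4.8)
The plan is to argue by induction on the time level $n$, carrying a uniform \emph{a priori} hypothesis $\|\pomega^k\|_{H^2}\le M_0$ for all $k\le n$, where $M_0=M_0(\|\omega_0\|_{H^2},\nu,\|\f\|_\infty)$ is fixed only at the end. Under this hypothesis I would derive one-step energy inequalities for the $L^2$, the $H^1$, and (as an auxiliary quantity needed to estimate the convection) the $H^2$ norm of $\omega^{n+1}$, close each by a Gr\"onwall-type recursion, and finally choose $M_0$ so that the resulting uniform bounds are consistent with the hypothesis, thereby closing the induction. The engine throughout is the $G$-stability of the BDF3 stencil in the Nevanlinna--Odeh sense: there is a multiplier $\eta\in[0,1)$ and a symmetric positive-definite matrix $G$ such that testing the BDF3 difference operator against $\omega^{n+1}-\eta\omega^n$ (and, in the higher-norm estimates, against $-\Delta_N(\omega^{n+1}-\eta\omega^n)$) produces a telescoping $G$-norm plus a nonnegative remainder controlling the backward differences. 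Since $\overline{\omega^k}=0$ by Proposition~\ref{prop:average}, the discrete Poincar\'e inequality \eqref{lem 2-1} and the elliptic regularity \eqref{est-coll-prelim-1} apply, and all $H^m$ norms of $\pomega^k$ are equivalent on ${\cal B}^K$ to the discrete norms of $\omega^k$.

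\textbf{The enstrophy ($L^2$) estimate.} Take the discrete inner product of \eqref{scheme-BDF3-1} with $\omega^{n+1}-\eta\omega^n$. The BDF3 term gives, by the $G$-stability identity,
\[
\Big\langle\tfrac1\dt\big(\tfrac{11}{6}\omega^{n+1}-3\omega^n+\tfrac32\omega^{n-1}-\tfrac13\omega^{n-2}\big),\,\omega^{n+1}-\eta\omega^n\Big\rangle\ \ge\ \tfrac1\dt\big(F^{n+1}-F^n\big)+\tfrac{c_0}{\dt}\,{\cal R}^{n},
\]
with $F^n=\|(\omega^n,\omega^{n-1},\omega^{n-2})\|_G^2$ and ${\cal R}^n$ a nonnegative form in the differences $\omega^{m+1}-\omega^m$, $n-2\le m\le n$. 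The diffusion term contributes $-\nu\|\nabla_N\omega^{n+1}\|^2+\nu\eta\langle\nabla_N\omega^{n+1},\nabla_N\omega^n\rangle$; I would split the cross term by Young's inequality, keep net dissipation $\gtrsim\nu\|\nabla_N\omega^{n+1}\|^2$, and fold the leftover gradient term into an augmented energy $\widehat F^n=F^n+\nu\dt\,\|\nabla_N\omega^n\|_G^2$. The forcing term is bounded, via Cauchy--Schwarz, \eqref{lem 2-1} and Young, by a small multiple of $\nu\|\nabla_N\omega^{n+1}\|^2$ plus a uniform residual $\lesssim\nu^{-1}\|\f\|_\infty^2$. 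The decisive term is the extrapolated convection: using the skew-symmetry \eqref{lem 2-2} and the fact that $\u^\ell\DOT\nabla_N\omega^\ell+\nabla_N\cdot(\u^\ell\omega^\ell)-\overline{\u^\ell\DOT\nabla_N\omega^\ell}$ is mean-free and $\ell^2$-orthogonal to $\omega^\ell$, each inner product of a convection term with $\omega^{n+1}-\eta\omega^n$ may be rewritten purely in terms of the backward differences $\omega^{m+1}-\omega^m$ ($n-2\le m\le n$); bounding the convection operator by Lemma~\ref{lem:convection} (with $\|\pomega^\ell\|_{H^\delta}\le CM_0$ from the induction hypothesis) and applying Young's inequality with a parameter $\propto\nu/M_0^2$, the part proportional to $\|\nabla_N\omega^\ell\|^2$ is absorbed by the carried gradient dissipation — with coefficient $\le\nu$, hence \emph{no} constraint on $M_0$ — while the part proportional to $\|\omega^{m+1}-\omega^m\|^2$ is absorbed by $\tfrac{c_0}{\dt}{\cal R}^n$ together with the difference content of $\widehat F^n$, precisely when $\dt\le\nu/(C_wM_0^2)$. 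This is exactly the origin of the time-step restriction \eqref{constraint-BDF3-dt}, and crucially the convection term leaves \emph{no} residual. Collecting everything, multiplying by $\dt$ and invoking Poincar\'e to dominate $\widehat F^{n+1}$ by the dissipation, one arrives at
\[
\widehat F^{n+1}\ \le\ \big(1+\beta_0\nu\dt\big)^{-1}\big(\widehat F^{n}+C\dt\,\nu^{-1}\|\f\|_\infty^2\big),
\]
and iterating yields \eqref{BDF3-est-L2-2} with $F^0=\widehat F^0$ and $R^{(0)}\sim\nu^{-1}\|\f\|_\infty$.

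\textbf{The $H^1$ estimate (and the auxiliary $H^2$ bound).} I would repeat the template with the weight $-\Delta_N(\omega^{n+1}-\eta\omega^n)$: the BDF3 part telescopes $G_1^n=\|(\nabla_N\omega^n,\nabla_N\omega^{n-1},\nabla_N\omega^{n-2})\|_G^2$ plus a nonnegative remainder in $\nabla_N(\omega^{m+1}-\omega^m)$, the diffusion gives net dissipation $\gtrsim\nu\|\Delta_N\omega^{n+1}\|^2$, and the forcing leaves a residual $\lesssim\nu^{-1}\|\f\|_\infty^2$. For the convection I would exploit the two-dimensional structure (no vortex stretching, together with the cancellation $\langle\u^\ell\DOT\nabla_N g,\,g\rangle=0$ from Lemma~\ref{lem:Poincare}), which keeps the top-order contribution sub-critical after integration by parts and the aliasing control of Lemma~\ref{lem:aliasing}; combined with the already-established uniform enstrophy bound — which, through a discrete uniform-Gr\"onwall step, controls the coefficient appearing in the $H^1$ recursion without any smallness, after which the sharp decay-plus-residual form can be recovered — this produces \eqref{BDF3-est-H1-2} with $G_1^0=\widehat G_1^0$ and $R^{(1)}\sim\nu^{-1}\|\f\|_\infty$. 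The auxiliary $H^2$ bound used above is obtained by the same argument with the weight $\Delta_N^2(\omega^{n+1}-\eta\omega^n)$, again leaning on the uniform lower-order bounds; the general $H^m$ bounds advertised in the abstract then follow by induction on $m$, each level using only the two preceding orders. Finally I fix $M_0$ larger than the uniform bounds produced by the $L^2$, $H^1$ and $H^2$ recursions — consistent because the convection contributes no $M_0$-dependent residual, only the forcing does — which closes the outer induction on $n$.

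\textbf{Where the difficulty lies.} The core obstacle is the interplay between the \emph{non-A-stable} BDF3 stencil and the \emph{explicit, non-skew-symmetric} Adams--Bashforth convection. Since BDF3 admits no ordinary energy identity, the argument hinges entirely on the Nevanlinna--Odeh multiplier and on the nonnegative remainder ${\cal R}^n$; the delicate point is to show that ${\cal R}^n$ genuinely dominates all the backward-difference terms forced on us by the explicit convection — uniformly in $n$, with constants sharp enough that the absorption succeeds exactly under the scaling $\dt\lesssim\nu/M_0^2$ and leaves no residual — for otherwise one recovers only finite-time, not uniform-in-time, control. A secondary difficulty is the self-consistency of the nested induction: the a priori $H^2$ bound feeding the convection estimates must itself be recovered within the induction from strictly lower-order information (hence the role of the two-dimensional structure and the discrete uniform Gr\"onwall lemma), all while keeping careful track of the pseudo-spectral aliasing errors through Lemmas~\ref{lem:aliasing}, \ref{lem:Poincare} and \ref{lem:convection}.
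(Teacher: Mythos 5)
Your $L^2$ layer is essentially the paper's argument: the paper also tests \eqref{scheme-BDF3-1} against a shifted multiplier (concretely $3\omega^{n+1}-2\omega^n$), obtains the telescoping-plus-difference control not from an abstract $G$-stability statement but from the explicit Lemma~\ref{lem:BDF3-stencil} combined with the extra estimate \eqref{BDF3-telescope-2}, rewrites the extrapolated convection in terms of backward differences via the skew-symmetry \eqref{lem 2-2}, and absorbs the difference terms under exactly the constraint \eqref{constraint-BDF3-dt-1}, i.e.\ $\dt\lesssim \nu/\tilde C_1^2$. One caution there: the coercivity of the Nevanlinna--Odeh remainder in the backward differences, which your absorption step relies on, is asserted rather than proved; the paper manufactures that coercivity explicitly (the $\tfrac{13}{12}\|\omega^{n+1}-\omega^n\|_2^2-\tfrac{7}{12}\|\omega^{n}-\omega^{n-1}\|_2^2-\tfrac16\|\omega^{n-1}-\omega^{n-2}\|_2^2$ structure), and some such construction is needed, not optional.

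The genuine gaps are in the higher-order layers. First, for the $H^1$ estimate your convection treatment invokes ``no vortex stretching'' and the cancellation $\langle \u\DOT\nabla_N g, g\rangle=0$; but after testing with $-\Delta_N(3\omega^{n+1}-2\omega^n)$ no such cancellation is available (even at the continuous, same-time level $\langle \u\cdot\nabla\omega,\Delta\omega\rangle\neq0$), and with the extrapolated convection at levels $n,n-1,n-2$ hitting $\Delta_N$ of levels $n+1,n$ there is nothing to cancel. The step that actually closes the $H^1$ recursion in the paper is different: bound the convection in $L^2$ by $\gamma_0\|\pomega^\ell\|_{H^\delta}\|\nabla_N\omega^\ell\|_2$, then use the already-established uniform $L^2$ bound $C_4$ and the interpolations \eqref{est-BDF3-H1-5-2}--\eqref{est-BDF3-H1-5-3} to reduce everything to $\|\Delta_N\omega^\ell\|_2^{1+\delta}$ with $1+\delta<2$, and apply Young's inequality so the dissipation absorbs it and only an a-priori-constant-free residual $C_{5,\nu}$ survives. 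Your ``discrete uniform-Gr\"onwall'' remark does not substitute for this; it is not developed, and it is precisely where the $M_0$-independence of the residual must be earned. Second, your induction hypothesis $\|\pomega^k\|_{H^2}\le M_0$ overshoots what is needed and cannot be closed under the theorem's hypotheses: Lemma~\ref{lem:convection} only requires an $H^\delta$ bound with small $\delta$ (the paper takes $\delta=\tfrac18$), which is recovered from the uniform $L^2$ and $H^1$ bounds by interpolation \eqref{a priori-3}, \eqref{a priori-4}; no discrete $H^2$ estimate is needed for Theorem~\ref{thm:stability}. Worse, your auxiliary uniform $H^2$ estimate (testing with $\Delta_N^2(\cdot)$) forces the forcing term $\langle \f,\Delta_N^2(\cdot)\rangle$ to be integrated by parts onto $\nabla_N\f$, which requires $\f\in L^\infty(0,T;H^1)$ --- compare Theorem 3.1, which assumes $\f\in L^\infty(0,T;H^{s-1})$ for the $H^s$ bound --- whereas the theorem only provides $\f\in L^\infty(0,T;L^2)$. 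As written, your outer induction therefore cannot be closed; dropping the $H^2$ layer, weakening the hypothesis to an $H^\delta$ bound, and replacing the $H^1$ cancellation claim by the interpolation-plus-Young absorption would bring the plan in line with a complete proof.
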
 
\begin{rem}
	For the continuous 2-D NSE, the classical theory guarantees a global-in-time $L^2$ bound on the vorticity under the sole assumption $\omega_0\in L^2$. However, the multi-step nature of the proposed IMEX BDF3 scheme \eqref{scheme-BDF3-1}-\eqref{scheme-BDF3-3} makes the numerical analysis much more complicated than the one for the continuous equation. In more details, the $L^2$-style quantity $F^n$, introduced in \eqref{est-BDF3-L2-8-6} and proved to have a global-in-time bound, contains a few $H^1$ terms of the numerical solution, with coefficients of order $\mathcal{O}(\Delta t)$. Therefore, to theoretically derive a uniform bound for $F^n$ in time, we need to require an $H^1$ regularity class of the initial data.

Moreover, based on the time step size constraint \eqref{constraint-BDF3-dt-1} (needed in the nonlinear estimate), a global-in-time $H^1$ estimate of the numerical solution becomes necessary to accomplish the theoretical analysis. Similarly, $H^1$-style quantity $G_1^n$, introduced in \eqref{est-BDF3-H1-8-4}, contains a few $H^2$ terms of the numerical solution, with coefficients of order $\mathcal{O}(\Delta t)$. As a consequence, to theoretically derive a uniform bound for $G_1^n$ in time, we have to require an $H^2$ regularity class of the initial data.

The global-in-time bounds of $F^n$ and $G_1^n$ are necessary steps to derive both the uniform-in-time $L^2$ and $H^1$ bounds of the numerical solutions, and these two estimates have to be combined together. In other words, because of the time step size constraint \eqref{constraint-BDF3-dt-1}, a global-in-time estimate for $F^n$ could not be independently accomplished, and the bound estimate for $G_1^n$ plays an essential role to recover the value of $\tilde{C}_1$, which in turn requires an $H^2$ initial data. Therefore, the stronger regularity assumption on the initial vorticity reflects a limitation of the current analytical technique rather than a contradiction with the classical continuous theory. This assumption is technical and is not intended to be optimal.
\end{rem}

\section{Proof of Theorem \ref{thm:stability}: long-time stability analysis for the third-order multi-level scheme \eqref{scheme-BDF3-1}-\eqref{scheme-BDF3-3}}	
	Before the detailed stability analysis, we present the telescope formula %in \cite{JLiu2013} 
	for the third-order BDF temporal discretization operator in the following lemma. %; also see~\cite{yao17} for the related discussion. %error estimaes for the starting approximations in the following lemmas which will be frequently used in our proofs.
	
	\begin{lem} \label{lem:BDF3-stencil}
		For the third-order BDF temporal discretization operator, there exists $\alpha_i$, $i=1,\ldots,10$, $\alpha_1 \ne 0$, such that 
		\begin{equation} 
			\begin{aligned} 
				& \left\langle \frac{11}{6} f^{n+1} - 3 f^n + \frac32 f^{n-1} - \frac13 f^{n-2} ,
				2 f^{n+1} - f^n\right \rangle \\
				=&  \| \alpha_1 f^{n+1} \|_2^2 - \| \alpha_1 f^n \|_2^2
				+ \| \alpha_2 f^{n+1} + \alpha_3 f^n \|_2^2
				- \| \alpha_2 f^n + \alpha_3 f^{n-1} \|_2 ^2 
				\\
				&
				+ \| \alpha_4 f^{n+1} + \alpha_5 f^n + \alpha_6 f^{n-1} \|_2^2
				- \| \alpha_4 f^n + \alpha_5 f^{n-1} + \alpha_6 f^{n-2} \|_2^2 
				\\
				&+ \| \alpha_7 f^{n+1} + \alpha_8 f^n + \alpha_9 f^{n-1}
				+ \alpha_{10} f^{n-2} \|_2^2 . 
			\end{aligned} 
			\label{BDF3-telescope-1} 
		\end{equation} 
	\end{lem}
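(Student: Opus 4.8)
Identity~\eqref{BDF3-telescope-1} makes no use of the fact that $f^{n+1},f^n,f^{n-1},f^{n-2}$ are grid functions; it is a purely algebraic statement in an arbitrary inner-product space, and all that is needed is to produce ten real constants $\alpha_1,\dots,\alpha_{10}$ with $\alpha_1\ne0$ for which it holds. The plan is to match coefficients. First I would expand the left-hand side by bilinearity into the ten independent pairings $\langle f^p,f^q\rangle$, $p,q\in\{n+1,n,n-1,n-2\}$. Since the multiplier $2f^{n+1}-f^n$ involves only $f^{n+1}$ and $f^n$, the three pairings $\langle f^{n-1},f^{n-1}\rangle$, $\langle f^{n-2},f^{n-2}\rangle$, $\langle f^{n-1},f^{n-2}\rangle$ carry zero coefficient, and the remaining seven are explicit rationals (for example $\tfrac{11}{3}$ in front of $\|f^{n+1}\|_2^2$, $-\tfrac{47}{6}$ in front of $\langle f^{n+1},f^n\rangle$, and $-\tfrac23$ in front of $\langle f^{n+1},f^{n-2}\rangle$).

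Next I would expand the right-hand side. Setting $\mathcal E^{m}:=\|\alpha_1 f^{m}\|_2^2+\|\alpha_2 f^{m}+\alpha_3 f^{m-1}\|_2^2+\|\alpha_4 f^{m}+\alpha_5 f^{m-1}+\alpha_6 f^{m-2}\|_2^2$, the first three differences of squares in~\eqref{BDF3-telescope-1} are exactly $\mathcal E^{n+1}-\mathcal E^{n}$, while the last term is the single square $\|\alpha_7 f^{n+1}+\alpha_8 f^n+\alpha_9 f^{n-1}+\alpha_{10}f^{n-2}\|_2^2$. Expanding these into the same ten pairings and equating with the left-hand side yields a system of ten quadratic equations in the $\alpha_i$ (for instance $2\alpha_7\alpha_{10}=-\tfrac23$, $\alpha_{10}^2-\alpha_6^2=0$, $\alpha_1^2+\alpha_2^2+\alpha_4^2+\alpha_7^2=\tfrac{11}{3}$, and so on). Its cascading structure allows a back-substitution; I would record the resulting constants and then confirm~\eqref{BDF3-telescope-1} by a direct re-expansion.

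The one substantive point is that this quadratic system must be shown to admit a real solution of the prescribed shape: one in which the first three squares genuinely assemble into a positive semidefinite quadratic form $\mathcal E^{m}$ on triples and the residual collapses to exactly one perfect square. In matrix language, writing the left-hand side as $x^{T}Sx$ with $x=(f^{n+1},f^n,f^{n-1},f^{n-2})$ and $\mathcal E^{n+1}-\mathcal E^{n}=x^{T}(\tilde G_{+}-\tilde G_{-})x$, where $G\succeq0$ is the $3\times3$ Gram matrix of $\mathcal E^{m}$ embedded on the first, respectively the last, three coordinates, what is needed is that $S-(\tilde G_{+}-\tilde G_{-})$ be positive semidefinite of rank one, i.e.\ equal to $vv^{T}$ with $v=(\alpha_7,\alpha_8,\alpha_9,\alpha_{10})$. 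This is the discrete counterpart of the $G$-stability (in the sense of Dahlquist's theory) of the third-order BDF one-leg scheme relative to the multiplier $2\zeta-1$ — precisely the reason BDF3, unlike BDF2, must be tested against $2f^{n+1}-f^n$ rather than against $f^{n+1}$ — and it is the heart of the lemma. I would establish it constructively, by exhibiting $G$ (equivalently $\alpha_1,\dots,\alpha_6$), which in particular forces $\alpha_1,\alpha_3,\alpha_6\ne0$, and then reading off $v$ from the rank-one defect; everything past that point is routine algebra.
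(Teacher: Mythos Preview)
The paper does not actually prove Lemma~\ref{lem:BDF3-stencil}; it is merely stated as a ``telescope formula'' and then used. Your proposal therefore goes beyond what the paper offers.

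Your approach is correct and is the standard one: expand both sides in the ten pairings $\langle f^p,f^q\rangle$, reduce the identity to a system of quadratic equations in $\alpha_1,\dots,\alpha_{10}$, and solve by back-substitution. The specific coefficients you quote ($\tfrac{11}{3}$, $-\tfrac{47}{6}$, $-\tfrac{2}{3}$) are right, and your recognition that the structure is exactly a $G$-stability decomposition $S=\tilde G_{+}-\tilde G_{-}+vv^{T}$ with $G\succeq 0$ and a rank-one residual is the right conceptual frame (this is the Nevanlinna--Odeh multiplier technique for BDF3, adapted here to the multiplier $2\zeta-1$). The only thing your write-up leaves implicit is the verification that the resulting $G$ is indeed positive semidefinite with $G_{11}>0$ so that a real lower-triangular factor with $\alpha_1\ne 0$ exists; once you exhibit the explicit $\alpha_i$ (or $G$ and $v$) and check this, the proof is complete. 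Your side remark that $\alpha_3,\alpha_6\ne 0$ is stronger than what the lemma asserts and is not needed.
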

	
	Meanwhile, the telescope formula~\eqref{BDF3-telescope-1} is not sufficient to control the nonlinear inner product. To overcome this difficulty, we need additional stabilization terms in the temporal differentiation. In more details, we begin with the following equivalent form of the BDF3 temporal stencil
	\begin{equation*} 
		\frac{11}{6} f^{n+1} - 3 f^n + \frac32 f^{n-1} - \frac13 f^{n-2}  =  \frac23 ( f^{n+1} - f^n )  + \frac76 ( f^{n+1} - 2 f^n + f^{n-1} ) 
		+ \frac13 ( f^{n-1} - f^{n-2} ) , 
	\end{equation*}
	which in turn givers the following estimate
	\begin{equation}   
		\label{BDF3-telescope-2} 
		\begin{aligned} 
			&  \left\langle  \frac{11}{6} f^{n+1} - 3 f^n + \frac32 f^{n-1} 
			- \frac13 f^{n-2} ,  f^{n+1} - f^n  \right\rangle 
			\\
			= & 
			\frac23  \| f^{n+1} - f^n \|_2^2 
			+ \frac76 \langle f^{n+1} - 2 f^n + f^{n-1} , f^{n+1} - f^n \rangle   
			+ \frac13 \langle f^{n-1} - f^{n-2} , f^{n+1} - f^n \rangle 
			\\
			\ge &  \frac23  \| f^{n+1} - f^n \|_2^2  - \frac16 \| f^{n+1} - f^n \|_2^2 - \frac16 \| f^{n-1} - f^{n-2} \|_2^2   
			\\
			&  + \frac{7}{12} \left( \| f^{n+1} - f^n \|_2^2 
			- \| f^n - f^{n-1} \|_2^2  + \| f^{n+1} - 2 f^n + f^{n-1} \|_2^2  \right) 
			\\
			= & \frac{13}{12}  \| f^{n+1} - f^n \|_2^2 
			- \frac{7}{12} \| f^n - f^{n-1} \|_2^2 
			- \frac16 \| f^{n-1} - f^{n-2} \|_2^2    
			+ \frac{7}{12} \| f^{n+1} - 2 f^n + f^{n-1} \|_2^2  .  
		\end{aligned} 
	\end{equation} 
	A combination of~\eqref{BDF3-telescope-1} and \eqref{BDF3-telescope-2} results in 
	\begin{equation} 
		\begin{aligned} 
			& \left\langle \frac{11}{6} f^{n+1} - 3 f^n + \frac32 f^{n-1} - \frac13 f^{n-2} ,
			3 f^{n+1} - 2 f^n \right\rangle \\
			\ge &  \| \alpha_1 f^{n+1} \|_2^2 - \| \alpha_1 f^n \|_2^2
			+ \| \alpha_2 f^{n+1} + \alpha_3 f^n \|_2^2
			- \| \alpha_2 f^n + \alpha_3 f^{n-1} \|_2 ^2 
			\\
			&
			+ \| \alpha_4 f^{n+1} + \alpha_5 f^n + \alpha_6 f^{n-1} \|_2^2
			- \| \alpha_4 f^n + \alpha_5 f^{n-1} + \alpha_6 f^{n-2} \|_2^2 
			%\\
			%  &+ \| \alpha_7 f^{n+1} + \alpha_8 f^n + \alpha_9 f^{n-1}
			%   + \alpha_{10} f^{n-2} \|_2^2 
			\\
			& 
			+ \frac{13}{12}  \| f^{n+1} - f^n \|_2^2 - \frac{7}{12} \| f^n - f^{n-1} \|_2^2 
			- \frac16 \| f^{n-1} - f^{n-2} \|_2^2    
			+ \frac{7}{12} \| f^{n+1} - 2 f^n + f^{n-1} \|_2^2 . 
		\end{aligned} 
		\label{BDF3-telescope-3} 
	\end{equation} 
	This inequality will play an important role in the global-in-time analysis. 
	
	To deal with a multi-step method, the $H^{\delta}$  a-priori bound is assumed for the numerical solution at all previous time steps: 
	\begin{equation}
		\| \pomega^k \|_{H^\delta} \le \tilde{C}_1 , 
		\quad  \forall 1 \le k \le n , \label{a priori-1}
	\end{equation}  
	for some $\delta > 0$. Note that $\tilde{C}_1$ is a global constant in time. We are going to prove that such a bound for the numerical solution is also available at time step $t^{n+1}$. Of course, an application of induction could justify such an a-priori assumption. 
	
	As a result of this a-priori assumption, a careful application of Lemma~\ref{lem:convection} reveals that 
	\begin{equation} 
		\| \u^\ell \DOT \nabla_N  \omega^\ell 
		+ \nabla_N \cdot ( \u^\ell  \omega^\ell ) \|_2 
		\le  \gamma_0 \| \pomega^\ell \|_{H^\delta}  \cdot  
		\|  \nabla_N  \omega^\ell  \|_2  
		\le \gamma_0 \tilde{C}_1 \|  \nabla_N  \omega^\ell  \|_2 , 
		\label{a priori-2} 
	\end{equation} 
	for $\ell = n, n-1, n-2$, due to the fact that $\pomega^\ell$ is the associated vorticity variable in the continuous version, corresponding to $\u^\ell$. 
	
	Meanwhile, a careful application of Sobolev interpolation inequality gives 
	\begin{equation} 
		\| \pomega^\ell  \|_{H^\delta} 
		\le  C \| \pomega^\ell \|_{L^2}^{1-\delta} \cdot  
		\|  \pomega^\ell  \|_{H^1}^\delta  
		\le Q_0 \| \pomega^\ell \|_{L^2}^{1-\delta} \cdot  
		\|  \nabla \pomega^\ell  \|^\delta  
		\le Q_0 \| \omega^\ell \|_2^{1-\delta} \cdot  
		\|  \nabla_N \omega^\ell  \|_2^\delta   , 
		\label{a priori-3} 
	\end{equation} 
	for $\ell = n, n-1, n-2$, in which the constant $Q_0$ only depends on $\Omega$. The second step comes from the Poincar\'e inequality, combined with the fact that $\int_\Omega \, \pomega^\ell \, d {\bf x}=0$; the third step is based on the fact that $\pomega^\ell \in {\cal B}^K$, so that $\| \pomega^\ell \|_{L^2} = \| \omega^\ell \|_2$, and $\| \nabla \pomega^\ell \| = \| \nabla_N \omega^\ell \|_2$. 
	
	The inequalities~\eqref{a priori-2} and \eqref{a priori-3} will play an important role in the nonlinear analysis. 
	
	In addition, since both $\omega^k$ and $\Delta_N \omega^k$ have a zero-average, at any time step $t^k$, as stated in Proposition~\ref{prop:average}, we see that their discrete inner product with the subtracted average constant term always vanishes: 
	\begin{equation} 
		\langle \overline{\u^\ell \DOT \nabla_N \omega^\ell} , \omega^k \rangle = 0 , \quad 
		\langle \overline{\u^\ell \DOT \nabla_N \omega^\ell} , \Delta_N \omega^k \rangle = 0  , 
		\quad 
		\mbox{since $\overline{\omega^k} = \overline{\Delta_N \omega^k} = 0$}  , 
		\quad \forall k, \ell \ge 0 . 
		\label{average-5} 
	\end{equation} 
	This identity will also be repeatedly used in the later analysis. 
	
	\subsection{The leading $\ell^\infty (0,T; L^2) \cap \ell^2 (0,T; H^1)$ estimate for the vorticity} 
	Taking a discrete inner product with~\eqref{scheme-BDF3-1} by $3 \omega^{n+1} - 2 \omega^n$ gives 
	\begin{align} 
		& 
		\Big\langle \frac{11}{6} \omega^{n+1} - 3 \omega^n + \frac32 \omega^{n-1} - \frac13 \omega^{n-2} ,
		3 \omega^{n+1} - 2 \omega^n \Big\rangle   
		+ \nu \dt   \big\langle \nabla_N  \omega^{n+1}  , 
		\nabla_N ( 3 \omega^{n+1} - 2 \omega^n ) \big\rangle 
		\nonumber 
		\\
		= & 
		- \frac32 \dt \left\langle \u^n \DOT \nabla_N \omega^n 
		+ \nabla_N \cdot ( \u^n \omega^n ) , 3 \omega^{n+1} - 2 \omega^n \right\rangle   \nonumber 
		\\
		&  
		+ \frac32 \dt \left\langle \u^{n-1} \DOT \nabla_N \omega^{n-1} 
		+ \nabla_N \cdot \left( \u^{n-1} \omega^{n-1} \right) , 
		3 \omega^{n+1} - 2 \omega^n \right\rangle   \nonumber 
		\\
		&
		- \frac12 \dt \left\langle \u^{n-2} \DOT \nabla_N \omega^{n-2} 
		+ \nabla_N \cdot \left( \u^{n-2} \omega^{n-2} \right) , 
		3 \omega^{n+1} - 2 \omega^n \right\rangle 
		+  \dt \left\langle  \f^{n+1} , 3 \omega^{n+1} - 2 \omega^n \right\rangle  .  
		\label{est-BDF3-L2-1}
	\end{align} 
	Notice that the identity~\eqref{average-5} has been applied. 
	
	The estimate for the temporal differentiation term follows the same idea as in the combined telescope formula~\eqref{BDF3-telescope-3}: 
	\begin{equation} 
		\begin{aligned} 
			& \langle \frac{11}{6} \omega^{n+1} - 3 \omega^n 
			+ \frac32 \omega^{n-1} - \frac13 \omega^{n-2} ,
			3 \omega^{n+1} - 2 \omega^n \rangle \\
			\ge &  \| \alpha_1 \omega^{n+1} \|_2^2 - \| \alpha_1 \omega^n \|_2^2
			+ \| \alpha_2 \omega^{n+1} + \alpha_3 \omega^n \|_2^2
			- \| \alpha_2 \omega^n + \alpha_3 \omega^{n-1} \|_2 ^2 
			\\
			&
			+ \| \alpha_4 \omega^{n+1} + \alpha_5 \omega^n + \alpha_6 \omega^{n-1} \|_2^2
			- \| \alpha_4 \omega^n + \alpha_5 \omega^{n-1} + \alpha_6 \omega^{n-2} \|_2^2 
			\\
			& 
			+ \frac{13}{12}  \| \omega^{n+1} - \omega^n \|_2^2 - \frac{7}{12} \| \omega^n - \omega^{n-1} \|_2^2 
			- \frac16 \| \omega^{n-1} - \omega^{n-2} \|_2^2    
			+ \frac{7}{12} \| \omega^{n+1} - 2 \omega^n + \omega^{n-1} \|_2^2 . 
		\end{aligned} 
		\label{est-BDF3-L2-2} 
	\end{equation} 
	The diffusion term could be analyzed in a straightforward way: 
	\begin{equation} 
		\begin{aligned}  
			& 
			\langle \nabla_N  \omega^{n+1}  , 
			\nabla_N ( 3 \omega^{n+1} - 2 \omega^n ) \rangle 
			=  \| \nabla_N \omega^{n+1} \|_2^2 
			+  2 \langle \nabla_N  \omega^{n+1}  , 
			\nabla_N ( \omega^{n+1} - \omega^n )  \rangle  
			\\
			= & 
			\| \nabla_N \omega^{n+1} \|_2^2 
			+  \|  \nabla_N  \omega^{n+1}  \|_2^2 - \| \nabla_N \omega^n \|_2^2 
			+ \| \nabla_N ( \omega^{n+1} - \omega^n )  \|_2^2 .   
		\end{aligned} 
		\label{est-BDF3-L2-3} 
	\end{equation} 
	A bound for the external force term can be derived as 
	\begin{equation} 
		\begin{aligned} 
			\left\langle  \f^{n+1} , 3 \omega^{n+1}  - 2 \omega^n \right\rangle  
			\le& \| \f^{n+1} \|_2  \cdot \| 3 \omega^{n+1}  - 2 \omega^n \|_2 
			\le C_2 \| \f^{n+1} \|_2  \cdot \| \nabla_N ( 3 \omega^{n+1} - 2 \omega^n ) \|_2    
			\\
			\le& 
			\frac{1}{32} \nu  \| \nabla_N ( 3 \omega^{n+1} - 2 \omega^n ) \|_2^2 
			+ 8 C_2^2  \nu^{-1} \| \f^{n+1} \|_2^2 
			\\
			\le  & 
			\frac{1}{16}  \nu  \| \nabla_N \omega^{n+1} \|_2^2 
			+ \frac14  \nu  \| \nabla_N ( \omega^{n+1} - \omega^n ) \|_2^2  
			+ 8 C_2^2 M^2 \nu^{-1}  , 
		\end{aligned} 
		\label{est-BDF3-L2-4}
	\end{equation}
	in which the discrete Poincar\'e inequality~\eqref{lem 2-1} was used in the second step, while the Cauchy inequality was applied in the last two steps.
	In terms of the nonlinear inner products, we first look at time step $t^n$.  Because of the following equality 
	\begin{equation} 
		\left\langle  \u^n \DOT \nabla_N \omega^n  
		+ \nabla_N \cdot \left( \u^n \omega^n \right) , \omega^n  \right\rangle       
		= 0  ,  \quad \mbox{(by~\eqref{lem 2-2})} , \label{est-BDF3-L2-5-1}
	\end{equation}
	we see that  
	\begin{equation} 
		\begin{aligned} 
			& 
			- \dt \left\langle \u^n \DOT \nabla_N \omega^n 
			+ \nabla_N \cdot \left( \u^n \omega^n \right) , 3 \omega^{n+1} - 2 \omega^n \right\rangle   
			\\
			=&
			- \dt \left\langle \u^n \DOT \nabla_N \omega^n 
			+ \nabla_N \cdot \left( \u^n \omega^n \right) , 3 \omega^{n+1} - 3 \omega^n \right\rangle 
			\\
			= & 
			-3 \dt \left\langle \u^n \DOT \nabla_N \omega^n 
			+ \nabla_N \cdot \left( \u^n \omega^n \right) ,  \omega^{n+1} -  \omega^n \right\rangle  .   
		\end{aligned} 
		\label{est-BDF3-L2-5-2} 
	\end{equation} 
	Meanwhile, an application of estimate~\eqref{a priori-2} (with $\ell=n$) indicates the following bound 
	\begin{equation} 
		\begin{aligned} 
			& 
			- \frac32 \dt \left\langle \u^n \DOT \nabla_N \omega^n 
			+ \nabla_N \cdot \left( \u^n \omega^n \right) , 3 \omega^{n+1} - 2 \omega^n \right\rangle   
			\\
			= & 
			-\frac92 \dt \left\langle \u^n \DOT \nabla_N \omega^n 
			+ \nabla_N \cdot \left( \u^n \omega^n \right) ,  \omega^{n+1} -  \omega^n \right\rangle  
			\\
			\le & 
			\frac92 \dt \| \u^n \DOT \nabla_N \omega^n 
			+ \nabla_N \cdot ( \u^n \omega^n ) \|_2 \cdot \| \omega^{n+1} -  \omega^n \|_2 
			\\
			\le & 
			\frac{9 \gamma_0 \dt}{2} \| \pomega^n \|_{H^\delta}  
			\cdot \| \nabla_N \omega^n \|_2 \cdot \| \omega^{n+1} -  \omega^n \|_2 \\
			\le &\frac{9 \gamma_0 \tilde{C}_1 \dt}{2} 
			\| \nabla_N \omega^n \|_2 \cdot \| \omega^{n+1} -  \omega^n \|_2 
			\\
			\le & 
			\frac{\nu}{4} \dt \|  \nabla_N \omega^n  \|_2^2
			+ \frac{81}{4} \gamma_0^2 \tilde{C}_1^2 \nu^{-1}  \dt    
			\| \omega^{n+1} - \omega^n \|_2^2 . 
		\end{aligned}   
		\label{est-BDF3-L2-5-3} 
	\end{equation} 
	
	The nonlinear terms at time steps $t^{n-1}$ and $t^{n-2}$ could be treated in a similar fashion. Some technical details are skipped for the sake of brevity.  
	\begin{equation} 
		\begin{aligned} 
			& 
			\dt \left\langle \u^{n-1} \DOT \nabla_N \omega^{n-1} 
			+ \nabla_N \cdot \left( \u^{n-1} \omega^{n-1} \right) , 3 \omega^{n+1} - 2 \omega^n \right\rangle   
			\\
			=&
			\dt \left\langle \u^{n-1} \DOT \nabla_N \omega^{n-1} 
			+ \nabla_N \cdot \left( \u^{n-1} \omega^{n-1} \right) ,  \omega^{n+1} - \omega^{n-1} 
			+ 2 (\omega^{n+1} -  \omega^n ) \right\rangle , 
		\end{aligned} 
		\label{est-BDF3-L2-6-1} 
	\end{equation} 
	\begin{equation} 
		\begin{aligned} 
			& 
			\frac32 \dt \left\langle \u^{n-1} \DOT \nabla_N \omega^{n-1} 
			+ \nabla_N \cdot \left( \u^{n-1} \omega^{n-1} \right) , 3 \omega^{n+1} - 2 \omega^n \right\rangle   
			\\
			= & 
			\frac32 \dt \left\langle \u^{n-1} \DOT \nabla_N \omega^{n-1} 
			+ \nabla_N \cdot \left( \u^{n-1} \omega^{n-1} \right) ,  \omega^{n+1} - \omega^{n-1} 
			+ 2 (\omega^{n+1} -  \omega^n )  \right\rangle  
			\\
			\le & 
			\frac32 \dt \| \u^{n-1} \DOT \nabla_N \omega^{n-1} 
			+ \nabla_N \cdot ( \u^{n-1} \omega^{n-1} ) \|_2 
			\cdot \| \omega^{n+1} - \omega^{n-1} 
			+ 2 (\omega^{n+1} -  \omega^n ) \|_2 
			\\
			\le & 
			\frac{3 \gamma_0 \dt}{2} \| \pomega^{n-1} \|_{H^\delta}  
			\cdot \| \nabla_N \omega^{n-1} \|_2 \cdot \| \omega^{n+1} - \omega^{n-1} 
			+ 2 (\omega^{n+1} -  \omega^n )  \|_2  
			\\
			\le & 
			\frac{3 \gamma_0 \tilde{C}_1 \dt}{2} 
			\| \nabla_N \omega^{n-1} \|_2 \cdot \| \omega^{n+1} - \omega^{n-1} 
			+ 2 (\omega^{n+1} -  \omega^n )  \|_2 
			\\
			\le & 
			\frac{\nu}{4} \dt \|  \nabla_N \omega^{n-1}  \|_2^2
			+ \frac94 \gamma_0^2 \tilde{C}_1^2 \nu^{-1}  \dt    
			\| \omega^{n+1} - \omega^{n-1} 
			+ 2 (\omega^{n+1} -  \omega^n )  \|_2^2 , 
		\end{aligned}   
		\label{est-BDF3-L2-6-2} 
	\end{equation} 
	\begin{equation} 
		\begin{aligned} 
			& 
			\dt \left\langle \u^{n-2} \DOT \nabla_N \omega^{n-2} 
			+ \nabla_N \cdot \left( \u^{n-2} \omega^{n-2} \right) , 3 \omega^{n+1} - 2 \omega^n \right\rangle   
			\\
			=&
			\dt \left\langle \u^{n-2} \DOT \nabla_N \omega^{n-2} 
			+ \nabla_N \cdot \left( \u^{n-2} \omega^{n-2} \right) ,  \omega^{n+1} - \omega^{n-2} 
			+ 2 (\omega^{n+1} -  \omega^n ) \right\rangle , 
		\end{aligned} 
		\label{est-BDF3-L2-7-1} 
	\end{equation} 
	\begin{equation} 
		\begin{aligned} 
			& 
			- \frac12 \dt \left\langle \u^{n-1} \DOT \nabla_N \omega^{n-1} 
			+ \nabla_N \cdot \left( \u^{n-1} \omega^{n-1} \right) , 3 \omega^{n+1} - 2 \omega^n \right\rangle   
			\\
			= & 
			- \frac12 \dt \left\langle \u^{n-2} \DOT \nabla_N \omega^{n-2} 
			+ \nabla_N \cdot \left( \u^{n-2} \omega^{n-2} \right) ,  \omega^{n+1} - \omega^{n-2} 
			+ 2 (\omega^{n+1} -  \omega^n )  \right\rangle  
			\\
			\le & 
			\frac12 \dt \| \u^{n-2} \DOT \nabla_N \omega^{n-2} 
			+ \nabla_N \cdot ( \u^{n-2} \omega^{n-2} ) \|_2 
			\cdot \| \omega^{n+1} - \omega^{n-2} 
			+ 2 (\omega^{n+1} -  \omega^n ) \|_2 
			\\
			\le & 
			\frac{\gamma_0 \dt}{2} \| \pomega^{n-2} \|_{H^\delta}  
			\cdot \| \nabla_N \omega^{n-2} \|_2 \cdot \| \omega^{n+1} - \omega^{n-2} 
			+ 2 (\omega^{n+1} -  \omega^n )  \|_2  
			\\
			\le & 
			\frac{\gamma_0 \tilde{C}_1 \dt}{2} 
			\| \nabla_N \omega^{n-2} \|_2 \cdot \| \omega^{n+1} - \omega^{n-2} 
			+ 2 (\omega^{n+1} -  \omega^n )  \|_2 
			\\
			\le & 
			\frac{3 \nu}{16} \dt \|  \nabla_N \omega^{n-2}  \|_2^2
			+ \frac13 \gamma_0^2 \tilde{C}_1^2 \nu^{-1}  \dt    
			\| \omega^{n+1} - \omega^{n-2} 
			+ 2 (\omega^{n+1} -  \omega^n )  \|_2^2 .  
		\end{aligned}   
		\label{est-BDF3-L2-7-2} 
	\end{equation} 
	
	Therefore, a substitution of~\eqref{est-BDF3-L2-2}-\eqref{est-BDF3-L2-4}, 
	\eqref{est-BDF3-L2-5-3}, \eqref{est-BDF3-L2-6-2} and \eqref{est-BDF3-L2-7-2} into \eqref{est-BDF3-L2-1} gives 
	\begin{equation} 
		\begin{aligned} 
			&
			\| \alpha_1 \omega^{n+1} \|_2^2 - \| \alpha_1 \omega^n \|_2^2
			+ \| \alpha_2 \omega^{n+1} + \alpha_3 \omega^n \|_2^2
			- \| \alpha_2 \omega^n + \alpha_3 \omega^{n-1} \|_2 ^2 
			\\
			&
			+ \| \alpha_4 \omega^{n+1} + \alpha_5 \omega^n + \alpha_6 \omega^{n-1} \|_2^2
			- \| \alpha_4 \omega^n + \alpha_5 \omega^{n-1} + \alpha_6 \omega^{n-2} \|_2^2 
			\\
			& 
			+ \frac{13}{12}  \| \omega^{n+1} - \omega^n \|_2^2 - \frac{7}{12} \| \omega^n - \omega^{n-1} \|_2^2 
			- \frac16 \| \omega^{n-1} - \omega^{n-2} \|_2^2    
			\\
			& 
			+ \frac{31 \nu \dt}{16} \| \nabla_N \omega^{n+1} \|_2^2 
			- \frac{5 \nu \dt}{4} \| \nabla_N \omega^n \|_2^2 
			+ \frac34 \nu \dt \| \nabla_N ( \omega^{n+1} - \omega^n )  \|_2^2 
			\\
			\le & 
			\frac{\nu \dt}{4} \| \nabla_N \omega^{n-1} \|_2^2 
			+ \frac{3 \nu \dt}{16} \| \nabla_N \omega^{n-2} \|_2^2 
			+ 8 C_2^2 M^2 \nu^{-1} \dt  
			+ \gamma_0^2 \tilde{C}_1^2 \nu^{-1} \dt  
			\Big( \frac{81}{4} \|  \omega^{n+1}  - \omega^n \|_2^2   
			\\
			& 
			+ \frac94 \|  \omega^{n+1}  - \omega^{n-1} + 2 ( \omega^{n+1} - \omega^n ) \|_2^2    
			+ \frac13 \|  \omega^{n+1}  - \omega^{n-2} + 2 ( \omega^{n+1} - \omega^n ) \|_2^2  \Big) 
			% + 8 C_2^2 M^2 \nu^{-1} \dt      
			\\
			\le & 
			\gamma_0^2 \tilde{C}_1^2 \nu^{-1} \dt  
			\Big( \frac{209}{4} \|  \omega^{n+1}  - \omega^n \|_2^2  
			+ \frac{32}{3} \|  \omega^n  - \omega^{n-1} \|_2^2  
			+ \frac53 \| \omega^{n-1} - \omega^{n-2}  \|_2^2 \Big)  
			\\
			&  
			+ \frac{\nu \dt}{4} \| \nabla_N \omega^{n-1} \|_2^2 
			+ \frac{3 \nu \dt}{16} \| \nabla_N \omega^{n-2} \|_2^2 
			+ C_3 \dt ,  
		\end{aligned} 
		\label{est-BDF3-L2-8-1} 
	\end{equation} 
	in which $C_3 = 8 C_2^2 M^2 \nu^{-1} $ and the Cauchy inequality has been repeatedly applied: 
	\begin{equation} 
		\begin{aligned} 
			& 
			\|  \omega^{n+1}  - \omega^{n-1} + 2 ( \omega^{n+1} - \omega^n ) \|_2^2
			= \|  3 ( \omega^{n+1} - \omega^n ) + \omega^n - \omega^{n-1}  \|_2^2    
			\\
			\le & 
			12  \|  \omega^{n+1}  - \omega^n \|_2^2 + 4  \| \omega^n - \omega^{n-1} \|_2^2  , 
			\\
			& 
			\|  \omega^{n+1}  - \omega^{n-2} + 2 ( \omega^{n+1} - \omega^n ) \|_2^2
			= \|  3 ( \omega^{n+1} - \omega^n ) + \omega^n - \omega^{n-1} 
			+ \omega^{n-1} - \omega^{n-2} \|_2^2    
			\\
			\le & 
			15  \|  \omega^{n+1}  - \omega^n \|_2^2 + 5  \| \omega^n - \omega^{n-1} \|_2^2 
			+ 5  \| \omega^{n-1} - \omega^{n-2} \|_2^2 . 
		\end{aligned} 
		\label{est-BDF3-L2-8-2} 
	\end{equation} 
	Moreover, inequality~\eqref{est-BDF3-L2-8-1} could be rewritten as 
	\begin{equation} 
		\begin{aligned} 
			&
			\| \alpha_1 \omega^{n+1} \|_2^2 + \| \alpha_2 \omega^{n+1} + \alpha_3 \omega^n \|_2^2
			+ \| \alpha_4 \omega^{n+1} + \alpha_5 \omega^n + \alpha_6 \omega^{n-1} \|_2^2  
			+ \frac{31 \nu \dt}{16} \| \nabla_N \omega^{n+1} \|_2^2  
			\\
			& 
			+ \Big( \frac{13}{12}  - \frac{209}{4} \gamma_0^2 \tilde{C}_1^2 \nu^{-1} \dt \Big) 
			\| \omega^{n+1} - \omega^n \|_2^2 
			+ \frac34 \nu \dt \| \nabla_N ( \omega^{n+1} - \omega^n )  \|_2^2  
			\\
			\le & 
			\| \alpha_1 \omega^n \|_2^2
			+ \| \alpha_2 \omega^n + \alpha_3 \omega^{n-1} \|_2 ^2 
			+ \| \alpha_4 \omega^n + \alpha_5 \omega^{n-1} + \alpha_6 \omega^{n-2} \|_2^2 
			+ \frac{5 \nu \dt}{4} \| \nabla_N \omega^n \|_2^2 
			\\
			& 
			+ \frac{\nu \dt}{4} \| \nabla_N \omega^{n-1} \|_2^2 
			+ \frac{3 \nu \dt}{16} \| \nabla_N \omega^{n-2} \|_2^2 
			+ \Big( \frac{7}{12} + \frac{32 \gamma_0^2 \tilde{C}_1^2 \nu^{-1} \dt}{3}  \Big) 
			\|  \omega^n  - \omega^{n-1} \|_2^2
			\\ 
			&   
			+ \Big( \frac16 + \frac{5 \gamma_0^2 \tilde{C}_1^2 \nu^{-1} \dt}{3}  \Big)  
			\| \omega^{n-1} - \omega^{n-2}  \|_2^2  + C_3 \dt .   
		\end{aligned} 
		\label{est-BDF3-L2-8-3} 
	\end{equation} 
	Under a constraint for the time step 
	\begin{equation} 
		\frac{209}{4} \gamma_0^2 \tilde{C}_1^2 \dt  \le \frac{\nu}{6}  ,  \quad \mbox{i.e.}, \quad 
		\dt \le  \frac{2 \nu}{627 \gamma_0^2 \tilde{C}_1^2}  ,  \label{constraint-BDF3-dt-1}
	\end{equation}   
	we get  
	\begin{equation} 
		\begin{aligned} 
			&
			\| \alpha_1 \omega^{n+1} \|_2^2 + \| \alpha_2 \omega^{n+1} + \alpha_3 \omega^n \|_2^2
			+ \| \alpha_4 \omega^{n+1} + \alpha_5 \omega^n + \alpha_6 \omega^{n-1} \|_2^2  
			+ \frac{31 \nu \dt}{16} \| \nabla_N \omega^{n+1} \|_2^2  
			\\
			& 
			+ \frac{11}{12} \| \omega^{n+1} - \omega^n \|_2^2 
			+ \frac34 \nu \dt \| \nabla_N ( \omega^{n+1} - \omega^n )  \|_2^2  
			\\
			\le & 
			\| \alpha_1 \omega^n \|_2^2
			+ \| \alpha_2 \omega^n + \alpha_3 \omega^{n-1} \|_2 ^2 
			+ \| \alpha_4 \omega^n + \alpha_5 \omega^{n-1} + \alpha_6 \omega^{n-2} \|_2^2 
			+ \frac{5 \nu \dt}{4} \| \nabla_N \omega^n \|_2^2 
			\\ 
			& 
			+ \frac{\nu \dt}{4} \| \nabla_N \omega^{n-1} \|_2^2 
			+ \frac{3 \nu \dt}{16} \| \nabla_N \omega^{n-2} \|_2^2 
			+ \frac58 \|  \omega^n  - \omega^{n-1} \|_2^2  
			+ \frac{5}{24} \| \omega^{n-1} - \omega^{n-2}  \|_2^2  + C_3 \dt .     
		\end{aligned} 
		\label{est-BDF3-L2-8-4} 
	\end{equation} 
	An addition of $\nu \dt ( \frac12 \| \nabla_N \omega^n \|_2^2 +  \frac{7}{32} \| \nabla_N \omega^{n-1} \|_2^2 +  \frac{1}{64} \| \nabla_N \omega^{n-2} \|_2^2 ) + \frac14 \| \omega^n - \omega^{n-1} \|_2^2$ to both sides of this inequality yields 
	\begin{equation} 
		\begin{aligned} 
			&
			\| \alpha_1 \omega^{n+1} \|_2^2 + \| \alpha_2 \omega^{n+1} + \alpha_3 \omega^n \|_2^2
			+ \| \alpha_4 \omega^{n+1} + \alpha_5 \omega^n + \alpha_6 \omega^{n-1} \|_2^2  
			+ \frac{31 \nu \dt}{16} \| \nabla_N \omega^{n+1} \|_2^2  
			\\
			& 
			+  \frac{\nu \dt}{2} \| \nabla_N \omega^n \|_2^2 
			+  \frac{7 \nu \dt}{32} \| \nabla_N \omega^{n-1} \|_2^2 
			+  \frac{\nu \dt}{64} \| \nabla_N \omega^{n-2} \|_2^2  
			+ \frac{11}{12} \| \omega^{n+1} - \omega^n \|_2^2 
			\\
			& 
			+  \frac14 \|  \omega^n  - \omega^{n-1} \|_2^2  
			+ \frac34 \nu \dt \| \nabla_N ( \omega^{n+1} - \omega^n )  \|_2^2  
			\\
			\le & 
			\| \alpha_1 \omega^n \|_2^2
			+ \| \alpha_2 \omega^n + \alpha_3 \omega^{n-1} \|_2 ^2 
			+ \| \alpha_4 \omega^n + \alpha_5 \omega^{n-1} + \alpha_6 \omega^{n-2} \|_2^2 
			+ \frac{7 \nu \dt}{4} \| \nabla_N \omega^n \|_2^2  
			\\
			& 
			+  \frac{15 \nu \dt}{32} \| \nabla_N \omega^{n-1} \|_2^2 
			+  \frac{13 \nu \dt}{64} \| \nabla_N \omega^{n-2} \|_2^2   
			+ \frac78 \|  \omega^n  - \omega^{n-1} \|_2^2  
			+ \frac{5}{24} \| \omega^{n-1} - \omega^{n-2}  \|_2^2  + C_3 \dt .     
		\end{aligned} 
		\label{est-BDF3-L2-8-5} 
	\end{equation} 
	For simplicity of presentation, the following quantity is introduced 
	\begin{equation} 
		\begin{aligned} 
			F^n = & 
			\| \alpha_1 \omega^n \|_2^2
			+ \| \alpha_2 \omega^n + \alpha_3 \omega^{n-1} \|_2 ^2 
			+ \| \alpha_4 \omega^n + \alpha_5 \omega^{n-1} + \alpha_6 \omega^{n-2} \|_2^2 
			+ \frac{7 \nu \dt}{4} \| \nabla_N \omega^n \|_2^2 
			\\
			& 
			+  \frac{15 \nu \dt}{32} \| \nabla_N \omega^{n-1} \|_2^2 
			+  \frac{13 \nu \dt}{64} \| \nabla_N \omega^{n-2} \|_2^2   
			+ \frac78 \|  \omega^n  - \omega^{n-1} \|_2^2  
			+ \frac{5}{24} \| \omega^{n-1} - \omega^{n-2}  \|_2^2 , 
		\end{aligned} 
		\label{est-BDF3-L2-8-6} 
	\end{equation} 
	so that we obtain 
	\begin{equation} 
		\begin{aligned} 
			&
			F^{n+1}  
			+ \frac{3 \nu \dt}{16} \| \nabla_N \omega^{n+1} \|_2^2  
			+  \frac{\nu \dt}{32} \| \nabla_N \omega^n \|_2^2 
			+  \frac{\nu \dt}{64} \| \nabla_N \omega^{n-1} \|_2^2 
			%  +  \frac{\nu \dt}{64} \| \nabla_N \omega^{n-2} \|_2^2  
			\\
			& 
			+ \frac{1}{24} ( \| \omega^{n+1} - \omega^n \|_2^2 
			+ \|  \omega^n  - \omega^{n-1} \|_2^2  )  
			\le   F^n  + C_3 \dt .     
		\end{aligned} 
		\label{est-BDF3-L2-8-7} 
	\end{equation} 
	On the other hand, the following estimates are available: 
	\begin{align} 
		& 
		\| \omega^\ell \|_2^2 \le C_2^2 \| \nabla_N \omega^\ell \|_2^2 ,  \, \, \,  \ell = n+1 , n , n-1 , \label{est-BDF3-L2-8-8-1} 
		\\
		& 
		\| \alpha_1 \omega^{n+1} \|_2^2
		+ \| \alpha_2 \omega^{n+1} + \alpha_3 \omega^n \|_2 ^2 
		+ \| \alpha_4 \omega^{n+1} + \alpha_5 \omega^n + \alpha_6 \omega^{n-1} \|_2^2  
		\nonumber 
		\\
		\le & 
		\alpha_1^* \| \omega^{n+1} \|_2^2 
		+ \alpha_2^* \| \omega^n \|_2^2 + \alpha_3^* \| \omega^{n-1} \|_2^2 ,\nonumber 
		\\
		\le & 
		\beta_1 ( \frac18 \| \nabla_N \omega^{n+1} \|_2^2  
		+  \frac{1}{64} \| \nabla_N \omega^n \|_2^2 
		+  \frac{1}{128} \| \nabla_N \omega^{n-1} \|_2^2  \Big) , 
		\label{est-BDF3-L2-8-8-2}       
	\end{align} 
	where
	\begin{align*}
		&\alpha_1^* = \alpha_1^2 + 2 \alpha_2^2 + 3 \alpha_4^2 , \, \, 
		\alpha_2^* = 2 \alpha_3^2 + 3 \alpha_5^2 , \, \, 
		\alpha_3^* = 3 \alpha_6^2 \\
		&\beta_1 = \max \{8 ( \alpha_1^* )^{-1} , 64 (\alpha_2^* )^{-1} , 
		128 ( \alpha_3^* )^{-1}\} C_2^{-2} .
	\end{align*}
	Going back~\eqref{est-BDF3-L2-8-7}, we arrive at 
	\begin{equation} 
		F^{n+1}  + \beta_0  \nu \dt F^{n+1}  \le   F^n  + C_3 \dt ,  \quad 
		\beta_0 = \min \{\beta_1^{-1} , 24^{-1}\} ,   
		\label{est-BDF3-L2-8-9} 
	\end{equation} 
	provided that $\nu \le 1$, $\dt \le 1$. An application of recursive argument to the above inequality implies that 
	\begin{equation*}
		F^{n+1} \le (1+ \beta_0 \nu \dt )^{-(n+1)} F^0 
		+ \beta_0^{-1} \nu^{-1} C_3 
		\le F^0 +  \beta_0^{-1} \nu^{-1} C_3,
	\end{equation*}
	so that
	\begin{equation}
		\begin{aligned} 
			&
			\| \omega^{n+1} \|_2^2  \le \alpha_1^{-2} F^{n+1} 
			\le \alpha_1^{-2} ( F^0 +  \beta_0^{-1} \nu^{-1} C_3 ) , 
			\\
			& 
			\| \omega^{n+1} \|_2  \le \alpha_1^{-1}
			( F^0 +  \beta_0^{-1} \nu^{-1} C_3 )^{-\frac12} := C_4 , \quad   
			\forall n \ge 0 . 
		\end{aligned} 
		\label{est-BDF3-L2-8-10}
	\end{equation}
	It is obvious that $C_4$ is a time dependent value.	In addition, we also have an $\ell^2 (0, T; H^1)$ bound for the numerical solution: 
	\begin{equation}
		\frac{\nu}{8} \dt \sum_{k=3}^{N}  \|  \nabla_N  \omega^k  \|_2^2  
		\le   F^0  +  C_3  \, T^*,  \label{est-BDF3-L2-9}
	\end{equation}
where $T^* \le T$ denotes the final discrete time.
	However, it is observed that the a-priori estimate~\eqref{est-BDF3-L2-8-9} is not sufficient to bound the $H^\delta$ norm~\eqref{a priori-1} of the vorticity field. In turn, we have to perform a higher-order 
	estimate $\ell^\infty (0,T;  H^1) \cap \ell^2 (0, T; H^2)$ for the numerical solution.

	\subsection{The higher order $\ell^\infty (0, T;  H^1) \cap \ell^2 (0, T; H^2)$ estimate for the vorticity} 
	
	Taking the discrete inner product with~\eqref{scheme-BDF3-1} by $-\Delta_N ( 3 \omega^{n+1} - 2 \omega^n)$ gives 
	\begin{align} 
		& 
		- \Big\langle \frac{11}{6} \omega^{n+1} - 3 \omega^n + \frac32 \omega^{n-1} - \frac13 \omega^{n-2} ,
		\Delta_N ( 3 \omega^{n+1} - 2 \omega^n ) \Big\rangle   \nonumber\\
		&+ \nu \dt   \langle \Delta_N  \omega^{n+1}  , 
		\Delta_N ( 3 \omega^{n+1} - 2 \omega^n ) \rangle 
		\nonumber 
		\\
		= & 
		\frac32 \dt\left\langle \u^n \DOT \nabla_N \omega^n 
		+ \nabla_N \cdot ( \u^n \omega^n )  - \left(\u^{n-1} \DOT \nabla_N \omega^{n-1} 
		+ \nabla_N \cdot( \u^{n-1} \omega^{n-1})\right), 
		\Delta_N ( 3 \omega^{n+1} - 2 \omega^n ) \right\rangle 
		 \nonumber 
		\\
		&
		+ \frac12 \dt \left\langle \u^{n-2} \DOT \nabla_N \omega^{n-2} 
		+ \nabla_N \cdot \left( \u^{n-2} \omega^{n-2} \right) , 
		\Delta_N ( 3 \omega^{n+1} - 2 \omega^n ) \right\rangle\nonumber\\
		&-  \dt \left\langle  \f^{n+1} , \Delta_N ( 3 \omega^{n+1} - 2 \omega^n ) \right\rangle  .  
		\label{est-BDF3-H1-1}
	\end{align} 
	Again, the identity~\eqref{average-5} has been repeatedly applied. 
	
	Similar estimates could be derived for the inner products associated with the temporal differentiation term and the diffusion term, following the same techniques as in the combined telescope formula~\eqref{BDF3-telescope-3}, as well as the triangular equality:  
	\begin{equation} 
		\begin{aligned} 
			& - \left\langle \frac{11}{6} \omega^{n+1} - 3 \omega^n 
			+ \frac32 \omega^{n-1} - \frac13 \omega^{n-2} ,
			\Delta_N ( 3 \omega^{n+1} - 2 \omega^n ) \right\rangle \\
			= & \left\langle \nabla_N \left( \frac{11}{6} \omega^{n+1} - 3 \omega^n 
			+ \frac32 \omega^{n-1} - \frac13 \omega^{n-2} \right) ,
			\nabla_N ( 3 \omega^{n+1} - 2 \omega^n ) \right\rangle \\
			\ge &  \| \alpha_1 \nabla_N \omega^{n+1} \|_2^2 - \| \alpha_1 \nabla_N \omega^n \|_2^2
			+ \| \nabla_N ( \alpha_2 \omega^{n+1} + \alpha_3 \omega^n ) \|_2^2
			- \| \nabla_N ( \alpha_2 \omega^n + \alpha_3 \omega^{n-1} ) \|_2 ^2 
			\\
			&
			+ \| \nabla_N ( \alpha_4 \omega^{n+1} + \alpha_5 \omega^n 
			+ \alpha_6 \omega^{n-1} ) \|_2^2
			- \| \nabla_N ( \alpha_4 \omega^n + \alpha_5 \omega^{n-1} 
			+ \alpha_6 \omega^{n-2} ) \|_2^2 
			\\
			& 
			+ \frac{13}{12}  \| \nabla_N ( \omega^{n+1} - \omega^n ) \|_2^2 
			- \frac{7}{12} \| \nabla_N ( \omega^n - \omega^{n-1} ) \|_2^2 
			- \frac16 \| \nabla_N ( \omega^{n-1} - \omega^{n-2} ) \|_2^2  ,  
			% + \frac{7}{12} \| \nabla_N ( \omega^{n+1} - 2 \omega^n + \omega^{n-1} ) \|_2^2 . 
		\end{aligned} 
		\label{est-BDF3-H1-2} 
	\end{equation} 
	\begin{equation} 
		\begin{aligned}  
			& 
			\langle \Delta_N  \omega^{n+1}  , 
			\Delta_N ( 3 \omega^{n+1} - 2 \omega^n ) \rangle \\
			=&  \| \Delta_N \omega^{n+1} \|_2^2 
			+  2 \langle \Delta_N  \omega^{n+1}  , 
			\Delta_N ( \omega^{n+1} - \omega^n )  \rangle  
			\\
			= & 
			\| \Delta_N \omega^{n+1} \|_2^2 
			+  \|  \Delta_N  \omega^{n+1}  \|_2^2 - \| \Delta_N \omega^n \|_2^2 
			+ \| \Delta_N ( \omega^{n+1} - \omega^n )  \|_2^2 .   
		\end{aligned} 
		\label{est-BDF3-H1-3} 
	\end{equation} 
	Again, the Cauchy inequality is applied to obtain a bound for the inner product term associated with the external force: 
	\begin{equation} 
		\begin{aligned} 
			- \left\langle  \f^{n+1} , \Delta_N ( 3 \omega^{n+1}  - 2 \omega^n ) \right\rangle  
			\le& \| \f^{n+1} \|_2  \cdot \| \Delta_N ( 3 \omega^{n+1}  - 2 \omega^n ) \|_2   
			\\
			\le& 
			\frac{1}{32} \nu  \| \Delta_N ( 3 \omega^{n+1} - 2 \omega^n ) \|_2^2 
			+ 8 M^2 \nu^{-1} . 
		\end{aligned} 
		\label{est-BDF3-H1-4}
	\end{equation}
	
	Regarding the nonlinear term at time step $t^n$, an application of inequality~\eqref{a priori-2} (with $\ell =n$) gives 
	\begin{equation} 
		\begin{aligned} 
			& 
			\frac32 \left\langle \u^n \DOT \nabla_N \omega^n 
			+ \nabla_N \cdot ( \u^n \omega^n ) , 
			\Delta_N ( 3 \omega^{n+1} - 2 \omega^n ) \right\rangle  
			\\
			\le & 
			\frac32 \| \u^n \DOT \nabla_N \omega^n 
			+ \nabla_N \cdot ( \u^n \omega^n ) \|_2 \cdot  
			\| \Delta_N ( 3 \omega^{n+1} - 2 \omega^n ) \|_2  
			\\
			\le & 
			\frac32 \gamma_0 \| \pomega^n \|_{H^\delta} 
			\cdot \| \nabla_N \omega^n \|_2 
			\cdot \| \Delta_N ( 3 \omega^{n+1} - 2 \omega^n ) \|_2 
			\\
			\le & 
			18 \gamma_0^2 \nu^{-1} \| \pomega^n \|_{H^\delta}^2  
			\cdot \| \nabla_N \omega^n \|_2^2  
			+ \frac{\nu}{32} \| \Delta_N ( 3 \omega^{n+1} - 2 \omega^n ) \|_2^2 .  
		\end{aligned} 
		\label{est-BDF3-H1-5-1}
	\end{equation}
	Meanwhile, the Sobolev interpolation estimate leads to 
	\begin{align} 
		\| \nabla_N \omega^n \|_2  
		\le  & \| \omega^n \|_2^\frac12 \cdot \| \Delta_N \omega^n \|_2^\frac12 
		\le  C_4^\frac12 \| \Delta_N \omega^n \|_2^\frac12  ,  \label{est-BDF3-H1-5-2}
		\\
		\| \pomega^n \|_{H^\delta} 
		\le & Q_0  \| \omega^n \|_2^{1-\delta} \cdot \| \nabla_N \omega^n \|_2^\delta  
		\quad \mbox{(by~\eqref{a priori-3} with $\ell =n$)}  \nonumber 
		\\
		\le & 
		Q_0  \| \omega^n \|_2^{1-\frac{\delta}{2}} 
		\cdot \| \Delta_N \omega^n \|_2^{\frac{\delta}{2}} \nonumber \\
		\le & Q_0  C_4^{1-\frac{\delta}{2}} 
		\| \Delta_N \omega^n \|_2^{\frac{\delta}{2}} , \label{est-BDF3-H1-5-3} 
	\end{align} 
	in which the global-in-time $\ell^2$ bound~\eqref{est-BDF3-L2-8-9} has been repeatedly applied. In turn, the following bound becomes available: 
	\begin{equation} 
		\begin{aligned} 
			18 \gamma_0^2 \nu^{-1} \| \pomega^n \|_{H^\delta}^2  
			\cdot \| \nabla_N \omega^n \|_2^2   
			\le & 18 \gamma_0^2 Q_0^2 C_4^{3 - \delta} \nu^{-1} 
			\| \Delta_N \omega^n \|_2^{1 + \delta}   
			\le 
			C_{5, \nu} + \frac{\nu}{6} \| \Delta_N \omega^n \|_2^2 , 
		\end{aligned} 
		\label{est-BDF3-H1-5-4}
	\end{equation} 
	where the Young's inequality is applied in the last step, due to the fact that $1+\delta < 2$. Notice that $C_{5, \nu}$ is a uniform-in-time constant, dependent on $\gamma_0$, $Q_0$ and $C_4$, and its dependence on $\nu^{-1}$ is in a polynomial form. Subsequently, its combination with~\eqref{est-BDF3-H1-5-1} yields 
	\begin{equation} 
		\begin{aligned} 
			& 
			\frac32 \left\langle \u^n \DOT \nabla_N \omega^n 
			+ \nabla_N \cdot ( \u^n \omega^n ) , 
			\Delta_N ( 3 \omega^{n+1} - 2 \omega^n ) \right\rangle  
			\\
			\le  & 
			C_{5, \nu} + \frac{\nu}{6} \| \Delta_N \omega^n \|_2^2  
			+ \frac{\nu}{32} \| \Delta_N ( 3 \omega^{n+1} - 2 \omega^n ) \|_2^2 .  
		\end{aligned} 
		\label{est-BDF3-H1-5-5}
	\end{equation}
	
	The nonlinear inner products at time steps $t^{n-1}$ and $t^{n-2}$ could be analyzed using similar techniques. 
	\begin{align} 
		& 
		\| \nabla_N \omega^{n-1} \|_2  
		\le   \| \omega^{n-1} \|_2^\frac12 \cdot \| \Delta_N \omega^{n-1} \|_2^\frac12 
		\le  C_4^\frac12 \| \Delta_N \omega^{n-1} \|_2^\frac12  ,  \label{est-BDF3-H1-6-1}
		\\
		& 
		\| \pomega^{n-1} \|_{H^\delta} 
		\le  Q_0  \| \omega^{n-1} \|_2^{1-\delta} \cdot \| \nabla_N \omega^{n-1} \|_2^\delta  
		\quad \mbox{(by~\eqref{a priori-3} with $\ell =n-1$)}   \nonumber 
		\\
		& \qquad \qquad  \,\,
		\le   Q_0  \| \omega^{n-1} \|_2^{1-\frac{\delta}{2}} 
		\cdot \| \Delta_N \omega^{n-1} \|_2^{\frac{\delta}{2}} 
		\le Q_0  C_4^{1-\frac{\delta}{2}} 
		\| \Delta_N \omega^{n-1} \|_2^{\frac{\delta}{2}} , \label{est-BDF3-H1-6-2} 
		\\
		& 
		18 \gamma_0^2 \nu^{-1} \| \pomega^{n-1} \|_{H^\delta}^2  
		\cdot \| \nabla_N \omega^{n-1} \|_2^2   
		\le  18 \gamma_0^2 Q_0^2 C_4^{3 - \delta} \nu^{-1} 
		\| \Delta_N \omega^{n-1} \|_2^{1 + \delta}   
		\le 
		C_{5, \nu} + \frac{\nu}{6} \| \Delta_N \omega^{n-1} \|_2^2 , 
		\label{est-BDF3-H1-6-3}
	\end{align} 
	\begin{equation} 
		\begin{aligned} 
			& 
			- \frac32 \left\langle \u^{n-1} \DOT \nabla_N \omega^{n-1} 
			+ \nabla_N \cdot ( \u^{n-1} \omega^{n-1} ) , 
			\Delta_N ( 3 \omega^{n+1} - 2 \omega^n ) \right\rangle  
			\\
			\le & 
			\frac32 \| \u^{n-1} \DOT \nabla_N \omega^{n-1} 
			+ \nabla_N \cdot ( \u^{n-1} \omega^{n-1} ) \|_2 \cdot  
			\| \Delta_N ( 3 \omega^{n+1} - 2 \omega^n ) \|_2  
			\\
			\le & 
			\frac32 \gamma_0 \| \pomega^{n-1} \|_{H^\delta} 
			\cdot \| \nabla_N \omega^{n-1} \|_2 
			\cdot \| \Delta_N ( 3 \omega^{n+1} - 2 \omega^n ) \|_2 
			\\
			\le & 
			18 \gamma_0^2 \nu^{-1} \| \pomega^{n-1} \|_{H^\delta}^2  
			\cdot \| \nabla_N \omega^{n-1} \|_2^2  
			+ \frac{\nu}{32} \| \Delta_N ( 3 \omega^{n+1} - 2 \omega^n ) \|_2^2  
			\\
			\le & 
			C_{5, \nu} + \frac{\nu}{6} \| \Delta_N \omega^{n-1} \|_2^2  
			+ \frac{\nu}{32} \| \Delta_N ( 3 \omega^{n+1} - 2 \omega^n ) \|_2^2 , 
		\end{aligned} 
		\label{est-BDF3-H1-6-4}
	\end{equation}
	
	\begin{align} 
		& 
		\| \nabla_N \omega^{n-2} \|_2  
		\le   \| \omega^{n-2} \|_2^\frac12 \cdot \| \Delta_N \omega^{n-2} \|_2^\frac12 
		\le  C_4^\frac12 \| \Delta_N \omega^{n-2} \|_2^\frac12  ,  \label{est-BDF3-H1-7-1}
		\\
		& 
		\| \pomega^{n-2} \|_{H^\delta} 
		\le  Q_0  \| \omega^{n-2} \|_2^{1-\delta} \cdot \| \nabla_N \omega^{n-2} \|_2^\delta  
		\le Q_0  C_4^{1-\frac{\delta}{2}} 
		\| \Delta_N \omega^{n-2} \|_2^{\frac{\delta}{2}} , \label{est-BDF3-H1-7-2} 
		\\
		& 
		2 \gamma_0^2 \nu^{-1} \| \pomega^{n-2} \|_{H^\delta}^2  
		\cdot \| \nabla_N \omega^{n-2} \|_2^2   
		\le  2 \gamma_0^2 Q_0^2 C_4^{3 - \delta} \nu^{-1} 
		\| \Delta_N \omega^{n-2} \|_2^{1 + \delta}   
		\le 
		\frac19 C_{5, \nu} + \frac{\nu}{6} \| \Delta_N \omega^{n-2} \|_2^2 , 
		\label{est-BDF3-H1-7-3}
	\end{align} 
	\begin{equation} 
		\begin{aligned} 
			& 
			\frac12 \left\langle \u^{n-2} \DOT \nabla_N \omega^{n-2} 
			+ \nabla_N \cdot ( \u^{n-2} \omega^{n-2} ) , 
			\Delta_N ( 3 \omega^{n+1} - 2 \omega^n ) \right\rangle  
			\\
			\le & 
			\frac12 \| \u^{n-2} \DOT \nabla_N \omega^{n-2} 
			+ \nabla_N \cdot ( \u^{n-2} \omega^{n-2} ) \|_2 \cdot  
			\| \Delta_N ( 3 \omega^{n+1} - 2 \omega^n ) \|_2  
			\\
			\le & 
			\frac12 \gamma_0 \| \pomega^{n-2} \|_{H^\delta} 
			\cdot \| \nabla_N \omega^{n-2} \|_2 
			\cdot \| \Delta_N ( 3 \omega^{n+1} - 2 \omega^n ) \|_2 
			\\
			\le & 
			2 \gamma_0^2 \nu^{-1} \| \pomega^{n-2} \|_{H^\delta}^2  
			\cdot \| \nabla_N \omega^{n-2} \|_2^2  
			+ \frac{\nu}{32} \| \Delta_N ( 3 \omega^{n+1} - 2 \omega^n ) \|_2^2  
			\\
			\le & 
			\frac19 C_{5, \nu} + \frac{\nu}{6} \| \Delta_N \omega^{n-2} \|_2^2  
			+ \frac{\nu}{32} \| \Delta_N ( 3 \omega^{n+1} - 2 \omega^n ) \|_2^2 . 
		\end{aligned} 
		\label{est-BDF3-H1-7-4}
	\end{equation}
	
	Finally, a substitution of~\eqref{est-BDF3-H1-2}-\eqref{est-BDF3-H1-4}, 
	\eqref{est-BDF3-H1-5-5}, \eqref{est-BDF3-H1-6-4} and \eqref{est-BDF3-H1-7-4} into \eqref{est-BDF3-H1-1} results in  
	\begin{equation} 
		\begin{aligned} 
			&
			\| \alpha_1 \nabla_N \omega^{n+1} \|_2^2 - \| \alpha_1 \nabla_N \omega^n \|_2^2
			+ \| \nabla_N ( \alpha_2 \omega^{n+1} + \alpha_3 \omega^n ) \|_2^2
			- \| \nabla_N ( \alpha_2 \omega^n + \alpha_3 \omega^{n-1} ) \|_2 ^2 
			\\
			&
			+ \| \nabla_N ( \alpha_4 \omega^{n+1} + \alpha_5 \omega^n + \alpha_6 \omega^{n-1} ) \|_2^2
			- \| \nabla_N ( \alpha_4 \omega^n + \alpha_5 \omega^{n-1} + \alpha_6 \omega^{n-2} ) \|_2^2 
			\\
			& 
			+ \frac{13}{12}  \| \nabla_N ( \omega^{n+1} - \omega^n ) \|_2^2 
			- \frac{7}{12} \| \nabla_N ( \omega^n - \omega^{n-1} ) \|_2^2 
			- \frac16 \| \nabla_N ( \omega^{n-1} - \omega^{n-2} ) \|_2^2    
			\\
			& 
			+ 2 \nu \dt \| \Delta_N \omega^{n+1} \|_2^2 
			- \frac{7 \nu \dt}{6} \| \Delta_N \omega^n \|_2^2 
			+ \nu \dt \| \Delta_N ( \omega^{n+1} - \omega^n )  \|_2^2 
			\\
			\le & 
			\frac{\nu \dt}{6} ( \| \Delta_N \omega^{n-1} \|_2^2 
			+  \| \Delta_N \omega^{n-2} \|_2^2 ) 
			+ \frac{\nu \dt}{8} \| \Delta_N ( 3 \omega^{n+1} - 2 \omega^n ) \|_2^2
			+ ( 8 M^2 \nu^{-1} + \frac{19}{9} C_{5, \nu} ) \dt  
			\\
			\le & 
			\frac{\nu \dt}{6} ( \| \Delta_N \omega^{n-1} \|_2^2 
			+  \| \Delta_N \omega^{n-2} \|_2^2 ) 
			+ \frac{\nu \dt}{4} \| \Delta_N \omega^{n+1} \|_2^2 
			+ \nu \dt \| \Delta_N ( \omega^{n+1} - \omega^n ) \|_2^2 + C_6 \dt,
		\end{aligned} 
		\label{est-BDF3-H1-8-1} 
	\end{equation} 
	in which $C_6 = 8 M^2 \nu^{-1} + \frac{19}{9} C_{5, \nu} $ and the Cauchy inequality has been applied in the last step. Subsequently, inequality~\eqref{est-BDF3-H1-8-1} could be rewritten as 
	\begin{equation} 
		\begin{aligned} 
			&
			\| \alpha_1 \nabla_N \omega^{n+1} \|_2^2 
			+ \| \nabla_N ( \alpha_2 \omega^{n+1} + \alpha_3 \omega^n ) \|_2^2
			+ \| \nabla_N ( \alpha_4 \omega^{n+1} + \alpha_5 \omega^n + \alpha_6 \omega^{n-1} ) \|_2^2  
			\\
			& 
			+ \frac{7 \nu \dt}{4} \| \Delta_N \omega^{n+1} \|_2^2  
			+ \frac{13}{12}  \| \nabla_N ( \omega^{n+1} - \omega^n ) \|_2^2 
			\\
			\le & 
			\| \alpha_1 \nabla_N  \omega^n \|_2^2
			+ \| \nabla_N ( \alpha_2 \omega^n + \alpha_3 \omega^{n-1} ) \|_2 ^2 
			+ \| \nabla_N ( \alpha_4 \omega^n + \alpha_5 \omega^{n-1} + \alpha_6 \omega^{n-2} ) \|_2^2
			\\
			& 
			+  \frac{7 \nu \dt}{6} \| \Delta_N \omega^n \|_2^2 
			+ \frac{\nu \dt}{6} ( \| \Delta_N \omega^{n-1} \|_2^2 
			+  \| \Delta_N \omega^{n-2} \|_2^2 ) + \frac{7}{12} \| \nabla_N ( \omega^n - \omega^{n-1} ) \|_2^2 
			\\
			& 
			+ \frac16 \| \nabla_N ( \omega^{n-1} - \omega^{n-2} ) \|_2^2 
			+ C_6 \dt .         
		\end{aligned} 
		\label{est-BDF3-H1-8-2} 
	\end{equation} 
	Meanwhile, an addition of $\nu \dt ( \frac38 \| \Delta_N \omega^n \|_2^2 +  \frac{3}{16} \| \Delta_N \omega^{n-1} \|_2^2 +  \frac{1}{96} \| \Delta_N \omega^{n-2} \|_2^2 ) + \frac14 \| \nabla_N ( \omega^n - \omega^{n-1} ) \|_2^2 $ to both sides of this inequality gives  
	\begin{equation} 
		\begin{aligned} 
			&
			\| \alpha_1 \nabla_N \omega^{n+1} \|_2^2 
			+ \| \nabla_N ( \alpha_2 \omega^{n+1} + \alpha_3 \omega^n ) \|_2^2
			+ \| \nabla_N ( \alpha_4 \omega^{n+1} + \alpha_5 \omega^n + \alpha_6 \omega^{n-1} ) \|_2^2  
			\\
			& 
			+ \frac{7 \nu \dt}{4} \| \Delta_N \omega^{n+1} \|_2^2  
			+ \frac{3 \nu \dt}{8} \| \Delta_N \omega^n \|_2^2  
			+ \frac{3 \nu \dt}{16} \| \Delta_N \omega^{n-1} \|_2^2  
			+ \frac{\nu \dt}{96} \| \Delta_N \omega^{n-2} \|_2^2 
			\\
			&  
			+ \frac{13}{12}  \| \nabla_N ( \omega^{n+1} - \omega^n ) \|_2^2 
			+ \frac14  \| \nabla_N ( \omega^n - \omega^{n-1} ) \|_2^2 
			\\
			\le & 
			\| \alpha_1 \nabla_N  \omega^n \|_2^2
			+ \| \nabla_N ( \alpha_2 \omega^n + \alpha_3 \omega^{n-1} ) \|_2 ^2 
			+ \| \nabla_N ( \alpha_4 \omega^n + \alpha_5 \omega^{n-1} + \alpha_6 \omega^{n-2} ) \|_2^2
			\\
			& 
			+  \frac{37 \nu \dt}{24} \| \Delta_N \omega^n \|_2^2 
			+ \frac{17 \nu \dt}{48} \| \Delta_N \omega^{n-1} \|_2^2 
			+  \frac{17 \nu \dt}{96} \| \Delta_N \omega^{n-2} \|_2^2 
			\\
			& 
			+ \frac56 \| \nabla_N ( \omega^n - \omega^{n-1} ) \|_2^2 
			+ \frac16 \| \nabla_N ( \omega^{n-1} - \omega^{n-2} ) \|_2^2 
			+ C_6 \dt .         
		\end{aligned} 
		\label{est-BDF3-H1-8-3} 
	\end{equation} 
	Similar to the leading order $\ell^2$ estimate, we introduce the following quantity  
	\begin{equation} 
		\begin{aligned} 
			G_1^n = & 
			\| \alpha_1 \nabla_N  \omega^n \|_2^2
			+ \| \nabla_N ( \alpha_2 \omega^n + \alpha_3 \omega^{n-1} ) \|_2 ^2 
			+ \| \nabla_N ( \alpha_4 \omega^n + \alpha_5 \omega^{n-1} + \alpha_6 \omega^{n-2} ) \|_2^2
			\\
			& 
			+  \frac{37 \nu \dt}{24} \| \Delta_N \omega^n \|_2^2 
			+ \frac{17 \nu \dt}{48} \| \Delta_N \omega^{n-1} \|_2^2 
			+  \frac{17 \nu \dt}{96} \| \Delta_N \omega^{n-2} \|_2^2 
			\\
			& 
			+ \frac56 \| \nabla_N ( \omega^n - \omega^{n-1} ) \|_2^2 
			+ \frac16 \| \nabla_N ( \omega^{n-1} - \omega^{n-2} ) \|_2^2 , 
		\end{aligned} 
		\label{est-BDF3-H1-8-4} 
	\end{equation} 
	so that~\eqref{est-BDF3-H1-8-3} could be simplified as 
	\begin{equation} 
		\begin{aligned} 
			&
			G_1^{n+1}  
			+ \frac{5 \nu \dt}{24} \| \Delta_N \omega^{n+1} \|_2^2  
			+  \frac{\nu \dt}{48} \| \Delta_N \omega^n \|_2^2 
			+  \frac{\nu \dt}{96} \| \Delta_N \omega^{n-1} \|_2^2 
			%  +  \frac{\nu \dt}{64} \| \nabla_N \omega^{n-2} \|_2^2  
			\\
			& 
			+ \frac14 \| \nabla_N ( \omega^{n+1} - \omega^n ) \|_2^2 
			+ \frac{1}{12} \|  \nabla_N ( \omega^n  - \omega^{n-1} ) \|_2^2   
			\le   G_1^n  + C_6 \dt .     
		\end{aligned} 
		\label{est-BDF3-H1-8-5} 
	\end{equation} 
	Meanwhile, to obtain a useful uniform-in-time bound, the following estimates are needed: 
	\begin{align} 
		& 
		\| \nabla_N \omega^\ell \|_2^2 \le C_2^2 \| \Delta_N \omega^\ell \|_2^2 , \qquad \ell = n+1 , n , n-1, \label{est-BDF3-H1-8-6-1} 
		\\
		& 
		\| \alpha_1 \nabla_N \omega^{n+1} \|_2^2
		+ \left\| \nabla_N ( \alpha_2 \omega^{n+1} + \alpha_3 \omega^n ) \right\|_2 ^2 
		+ \left\| \nabla_N ( \alpha_4 \omega^{n+1} + \alpha_5 \omega^n 
		+ \alpha_6 \omega^{n-1} ) \right\|_2^2  
		\nonumber 
		\\
		\le & 
		\alpha_1^* \| \nabla_N \omega^{n+1} \|_2^2 
		+ \alpha_2^* \| \nabla_N \omega^n \|_2^2 + \alpha_3^* \| \nabla_N \omega^{n-1} \|_2^2 ,   
		\nonumber 
		\\
		\le & 
		\beta_1 ( \frac18 \| \Delta_N \omega^{n+1} \|_2^2  
		+  \frac{1}{64} \| \Delta_N \omega^n \|_2^2 
		+  \frac{1}{128} \| \Delta_N \omega^{n-1} \|_2^2  \Big) , 
		\label{est-BDF3-H1-8-6-2}   
	\end{align} 
	where 
	\begin{align*}
		&\alpha_1^* = \alpha_1^2 + 2 \alpha_2^2 + 3 \alpha_4^2 , \, \, 
		\alpha_2^* = 2 \alpha_3^2 + 3 \alpha_5^2 , \, \, 
		\alpha_3^* = 3 \alpha_6^2,\\
		&\beta_1 = \max\left\{8 ( \alpha_1^* )^{-1} , 64 (\alpha_2^* )^{-1} , 
		128 ( \alpha_3^* )^{-1} \right\} C_2^{-2}.
	\end{align*}
	 Then, we arrive at 
	\begin{equation} 
		G_1^{n+1}  + \beta_0  \nu \dt G_1^{n+1}  \le   G_1^n  + C_6 \dt ,  \quad 
		\beta_0 = \min \{ \beta_1^{-1} , 24^{-1} \},   
		\label{est-BDF3-H1-8-7} 
	\end{equation} 
	provided that $\nu \le 1$, $\dt \le 1$. Again, a recursive analysis to the above inequality reveals that  
	\begin{equation*}
		G_1^{n+1} \le (1+ \beta_0 \nu \dt )^{-(n+1)} G_1^0  
		+ \beta_0^{-1} \nu^{-1} C_6 
		\le G_1^0 +  \beta_0^{-1} \nu^{-1} C_6 ,
	\end{equation*}
	so that
	\begin{equation}
		\begin{aligned} 
			&
			\| \nabla_N \omega^{n+1} \|_2^2  \le \alpha_1^{-2} G_1^{n+1} 
			\le \alpha_1^{-2} ( G_1^0 +  \beta_0^{-1} \nu^{-1} C_6 ) , 
			\\
			& 
			\| \nabla_N \omega^{n+1} \|_2  \le \alpha_1^{-1}
			( G_1^0 +  \beta_0^{-1} \nu^{-1} C_6 )^{-\frac12} := C_7 , \quad   
			\forall n \ge 0 . 
		\end{aligned} 
		\label{est-BDF3-H1-8-8}
	\end{equation}
	Of course, $C_7$ is a time dependent value. % however, its time dependence is in exponential decay so that a global in time bound is available. 
	
	In addition, an $\ell^2 (0, T; H^2)$ bound is also available for the numerical solution: 
	\begin{equation}
		\frac{\nu}{8} \dt \sum_{k=3}^{N}  \|  \Delta_N  \omega^k  \|_2^2  
		\le   G_1^0  +  C_6  \, T^*.  \label{est-BDF3-H1-9}
	\end{equation}
	
	\subsection{Recovery of the a-priori $H^\delta$ assumption~\eqref{a priori-1}} 
	
	With the $\ell^\infty (0,T; \ell^2)$ and $L^\infty (0,T; H^1)$ estimate of the numerical solution for the vorticity variable, given by~\eqref{est-BDF3-L2-8-10} and \eqref{est-BDF3-H1-8-8}, respectively, we are able to recover the $H^\delta$ assumption~\eqref{a priori-1}: 
	\begin{align} 
		\|  \pomega^{n+1} \|_{H^\delta}  
		\le  & Q_0 \|  \pomega^{n+1}  \|^{1-\delta}    
		\cdot \|  \nabla \pomega^{n+1} \|^{\delta}  
		\le Q_0 C_4^{1-\delta} C_7^\delta ,  \quad  
		\mbox{(by~\eqref{a priori-3})}  . 
		\label{a priori-4}
	\end{align}
	For simplicity, by taking $\delta=\frac18$, we see that~\eqref{a priori-1} is also valid at time step $t^{n+1}$ if we set 
	\begin{equation} 
		\tilde{C}_1 = Q_0 C_4^\frac78 C_7^\frac18 .  \label{a priori-5}
	\end{equation}
	Meanwhile, it is noticed that both $C_4$ and $C_7$ are independent of $\tilde{C}_1$ in the theoretical analysis. Instead, the constant $\tilde{C}_1$ is only used in the time step constraint~\eqref{constraint-BDF3-dt-1}. As a result, an induction argument could be effectively applied so that the a-priori $H^\delta$ assumption \eqref{a priori-1} is valid at any time step under 
	such a global time step constraint.  
	%Again, note that both $C_4$ and $C_7$ contains an exponential decay in time and therefore bounded by a given constant in time. 

	In other words, under \eqref{constraint-BDF3-dt-1}, a global-in-time constant constraint for the time step, the proposed BDF3 scheme \eqref{scheme-BDF3-1}-\eqref{scheme-BDF3-3} is unconditionally stable (in terms of spatial grid size and final time), in both the discrete $L^2$ and $H^1$ norms. The quantities $R^{(0)}$ and $R^{(1)}$ could be taken as $R^{(0)} = \alpha_1^{-1} ( \beta_0^{-1} \nu^{-1} C_3)^\frac12$ and $R^{(1)} = \alpha_1^{-1} ( \beta_0^{-1} \nu^{-1} C_6)^\frac12$, as given by \eqref{est-BDF3-L2-8-10} and \eqref{est-BDF3-H1-8-8}, respectively. This validates the estimates~\eqref{BDF3-est-L2-2}  and \eqref{BDF3-est-H1-2}, and an asymptotic decay for the $L^2$ and $H^1$ norm for the vorticity (equivalent to $H^1$ and $H^2$ norms for the velocity), has also been theoretically established. The proof of Theorem~\ref{thm:stability} is finished. 
\begin{rem}
	Notice that the constants $C_4$ and $C_7$ are independent of $\tilde{C}_1$. In more details, $C_2$ is a discrete Poincar\'e inequality constant, only dependent on $\Omega$, and $C_3 = 8 C_2^2 M^2 \nu^{-1}$, as given by \eqref{est-BDF3-L2-8-1}, and $C_4 = \alpha_1^{-1} (F^0 + \beta_0^{-1} \nu^{-1} C_3 )^{-\frac12}$, as given by \eqref{est-BDF3-L2-8-10}. Therefore, $C_4$ depends only on $\Omega$, initial data, external force term $\f$, as well as the viscosity parameter $\nu$. Moreover, $C_{5, \nu}$ (as given by \eqref{est-BDF3-H1-5-4}) is a constant created in the application of Young's inequality, so that it is only dependent on $\Omega$, $C_4$ and $\nu$, therefore, only dependent on $\Omega$, initial data $\f$ and $\nu$. Subsequently, constant $C_6= 8M^2 \nu^{-1} + \frac{19}{9} C_{5, \nu}$, as introduced in \eqref{est-BDF3-H1-8-1}, only depends on $\Omega$, initial data, $\f$ and $\nu$ as well. Of course, $C_7 = \alpha_1^{-1} (G_1^0 + \beta_0^{-1} \nu^{-1} C_6 )^{-\frac12}$, as defined in \eqref{est-BDF3-H1-8-8}, becomes a constant only dependent on $\Omega$, initial data, $\f$ and $\nu$.
\end{rem}
	
	\subsection{Higher order $\ell^\infty (0, T; H^s)$ estimate} 
	
	To simplify the notations, the following notations of differential operators are recalled, at both the continuous and discrete levels:  
	\begin{eqnarray} \nonumber
		\nabla^s f = \left\{  \begin{array}{l} 
			\Delta^k  f  ,   \quad\,\,\,\,\, \mbox{if $s=2k$} , 
			\\
			\nabla \Delta^k f , \quad \mbox{if $s=2k+1$} , 
		\end{array} \right.  \qquad 
		\nabla_N^s f = \left\{  \begin{array}{l} 
			\Delta_N^k  f  ,   \qquad\,\,\, \mbox{if $s=2k$} , 
			\\
			\nabla_N \Delta_N^k f , \quad \mbox{if $s=2k+1$} . 
		\end{array} \right. 
	\end{eqnarray} 
	
	The uniform-in-time stability analysis for the BDF3 scheme~\eqref{scheme-BDF3-1}-\eqref{scheme-BDF3-3}, in the higher order $H^s$ (with $s \ge 2$), could be performed in a similar style as in the $\ell^\infty (0, T; H^1)$ estimate. The theoretical result is stated in the following theorem; the technical details are skipped for the sake of brevity, and left to interested readers. 
	
	\begin{thm} 
		For the IMEX BDF3 scheme~\eqref{scheme-BDF3-1}-\eqref{scheme-BDF3-3}, we assume that $\omega_0 \in H^{s+1}$, the initial guess data $\omega^{-j}$ are bounded in $H^{s+1}$ ($1 \le j \le 3$),  $\f \in L^\infty(0,T;H^{s-1})$ and set $\|\f\|_{\infty}^{(s-1)} := \|\f\|_{L^\infty(0,T;H^{s-1})} =M^{(s-1)}$. Under the time step constraint~\eqref{constraint-BDF3-dt}, we have 
		\begin{equation*}
				\| \nabla_N^s \pomega^n \| \le \alpha_1^{-1} \Big( 1 + \beta_0 \nu \dt  \Big)^{-\frac{n}{2}}  
			(G_s^0)^\frac12  + R^{(s)} , \quad  \forall\, n\ge0,
		\end{equation*}
		where
		\begin{equation}\nonumber
			\begin{aligned} 
				G_s^0 = & 
				\| \alpha_1 \nabla_N^s  \omega^0 \|_2^2
				+ \| \nabla_N^s ( \alpha_2 \omega^0 + \alpha_3 \omega^{-1} ) \|_2 ^2 
				+ \| \nabla_N^s ( \alpha_4 \omega^0 + \alpha_5 \omega^{-1} + \alpha_6 \omega^{-2} ) \|_2^2
				\\
				& 
				+  \frac{37 \nu \dt}{24} \| \nabla_N^{s+1} \omega^0 \|_2^2 
				+ \frac{17 \nu \dt}{48} \| \nabla_N^{s+1} \omega^{-1} \|_2^2 
				+  \frac{17 \nu \dt}{96} \| \nabla_N^{s+1} \omega^{-2} \|_2^2 
				\\
				& 
				+ \frac56 \| \nabla_N^s ( \omega^0 - \omega^{-1} ) \|_2^2 
				+ \frac16 \| \nabla_N^s ( \omega^{-1} - \omega^{-2} ) \|_2^2 , 
			\end{aligned} 
		\end{equation}
		with $R^{(s)}$ a uniform in time constant  only dependent on $\| \omega^0 \|_{H^{s+1}}$, $\| \f \|_\infty^{(s-1)}$, $\Omega$ and $\nu$.  
	\end{thm}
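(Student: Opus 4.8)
\noindent The strategy is to argue by induction on $s$, the cases $s=0$ and $s=1$ being precisely Theorem~\ref{thm:stability}. For the inductive step we assume that the uniform-in-time bounds on $\|\nabla_N^{r}\omega^{k}\|_{2}$ have already been established for every $0\le r\le s-1$ and every $k$ (with constants $C^{*}_{r}$), together with the $\ell^{2}(0,T;H^{s})$ bound obtained at the previous level. The basic estimate is produced by taking the discrete inner product of~\eqref{scheme-BDF3-1} with $(-\Delta_N)^{s}(3\omega^{n+1}-2\omega^{n})$, where $(-\Delta_N)^{s}$ denotes the positive Fourier multiplier $|2\pi\mathbf{k}|^{2s}$; since $\nabla_N^{s}$ commutes with $\Delta_N$ and with all the linear combinations involved, the summation-by-parts identities~\eqref{spectral-coll-inner product-3} turn every linear pairing into one between $\nabla_N^{s}$ (respectively $\nabla_N^{s+1}$) of its arguments. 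Structurally this reduces to the $\ell^\infty(0,T;H^1)$ estimate, with $\omega^{k}$ replaced by $\nabla_N^{s}\omega^{k}$ and $\Delta_N\omega^{k}$ replaced by $\nabla_N^{s+1}\omega^{k}$ throughout.

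With this dictionary, the temporal term is handled by applying the combined telescope inequality~\eqref{BDF3-telescope-3} with $f^{k}=\nabla_N^{s}\omega^{k}$, which produces the telescoping quantity $G_s^{n}$ together with the controlled stabilization contributions $\|\nabla_N^{s}(\omega^{n+1}-\omega^{n})\|_2^2$ and so on; the diffusion term expands exactly as in~\eqref{est-BDF3-H1-3} and yields a coercive $2\nu\dt\,\|\nabla_N^{s+1}\omega^{n+1}\|_2^2$ with a small, controllable negative part; the external force term is treated by splitting the multiplier $|2\pi\mathbf{k}|^{2s}=|2\pi\mathbf{k}|^{s-1}\cdot|2\pi\mathbf{k}|^{s+1}$, that is, by pairing $\nabla_N^{s-1}\f_N^{n+1}$ against $\nabla_N^{s+1}(3\omega^{n+1}-2\omega^{n})$, followed by Cauchy--Schwarz and Young's inequality, the top-order factor being absorbed into the dissipation. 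This is exactly where the hypothesis $\f\in L^\infty(0,T;H^{s-1})$ is consumed: $\|\nabla_N^{s-1}\f_N^{n+1}\|_2\le\|\f(\cdot,t^{n+1})\|_{H^{s-1}}\le M^{(s-1)}$, since the Fourier projection does not increase Sobolev norms.

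The genuinely new ingredient --- and the main obstacle --- is the nonlinear term. One first needs a higher-order counterpart of Lemma~\ref{lem:convection}, namely an estimate of the form
\[
\bigl\|\nabla_N^{s-1}\bigl(\u^{\ell}\DOT\nabla_N f+\nabla_N\cdot(\u^{\ell}f)\bigr)\bigr\|_2
\le\gamma_0^{(s)}\,\|\pomega^{\ell}\|_{H^{s-1+\delta}}\,\|\nabla_N^{s} f\|_2,\qquad\overline{f}=0,
\]
obtained along the lines of Lemma~\ref{lem:convection}: pass to the continuous projections, control the interpolated products in $H^{s}$ by Lemma~\ref{lem:aliasing}, apply a Moser/Kato--Ponce product estimate to $\partial(U f_{\mathbf F})$ in continuous Sobolev norms, invoke the elliptic regularity~\eqref{est-coll-prelim-1} to trade $\|\U^{\ell}\|_{H^{s+\delta}}$ for $\|\pomega^{\ell}\|_{H^{s-1+\delta}}$, and finally use the equivalence of discrete and continuous norms on $\mathcal{B}^{K}$. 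Granting this, the nonlinear pairings for $\ell=n,n-1,n-2$ are bounded, after integration by parts, by $\|\nabla_N^{s-1}\mathcal{N}^{\ell}\|_2\cdot\|\nabla_N^{s+1}(3\omega^{n+1}-2\omega^{n})\|_2$ (writing $\mathcal{N}^{\ell}:=\u^{\ell}\DOT\nabla_N\omega^{\ell}+\nabla_N\cdot(\u^{\ell}\omega^{\ell})$), and one repeats the interpolation device of~\eqref{est-BDF3-H1-5-2}--\eqref{est-BDF3-H1-5-4}: using the inductive bound $\|\nabla_N^{s-1}\omega^{\ell}\|_2\le C^{*}_{s-1}$ and the Sobolev interpolation $\|\pomega^{\ell}\|_{H^{s-1+\delta}}\le C\|\pomega^{\ell}\|_{H^{s-1}}^{1-\delta/2}\|\pomega^{\ell}\|_{H^{s+1}}^{\delta/2}$, one replaces $\|\pomega^{\ell}\|_{H^{s-1+\delta}}^{2}\,\|\nabla_N^{s}\omega^{\ell}\|_2^{2}$ by a uniform-in-time constant times $\|\nabla_N^{s+1}\omega^{\ell}\|_2^{1+\delta}$, which is sub-quadratic because $\delta<1$, so Young's inequality absorbs it into the dissipation plus a uniform constant $C_{s,\nu}$ of polynomial growth in $\nu^{-1}$. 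The point that must be watched is precisely this exponent count: the highest derivative occurring anywhere is $H^{s+1}$ while the convection term costs effectively $s$ derivatives, leaving the same half-derivative of slack that makes the $H^{1}$ argument close, provided the interpolation weight on the top-order norm stays strictly below $1$ (this forces $\delta<1$, e.g. $\delta=\tfrac18$ as before).

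Collecting all the pieces reproduces an inequality of the form~\eqref{est-BDF3-H1-8-1} with $\Delta_N$ replaced by $\nabla_N^{s+1}$; adding the same multiples of the lower-level dissipation as in~\eqref{est-BDF3-H1-8-2}--\eqref{est-BDF3-H1-8-7}, and using the discrete Poincar\'e inequality~\eqref{lem 2-1} to dominate the $\alpha_i$-weighted combinations of $\nabla_N^{s}\omega$ by $\nabla_N^{s+1}\omega$, one arrives at the recursion
\[
G_s^{n+1}+\beta_0\,\nu\,\dt\,G_s^{n+1}+(\text{nonnegative terms})\le G_s^{n}+C^{(s)}\dt .
\]
A geometric-series argument then yields the stated decay estimate with $R^{(s)}=\alpha_1^{-1}\bigl(\beta_0^{-1}\nu^{-1}C^{(s)}\bigr)^{1/2}$, and simultaneously an $\ell^{2}(0,T;H^{s+1})$ bound that feeds the next inductive step. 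No new time-step restriction is created: because the nonlinear contribution carries the factor $\dt$ exactly like the dissipation, Young's inequality absorbs it without any smallness of $\dt$, so the only genuine restriction is the $L^{2}$-level one already recorded in~\eqref{constraint-BDF3-dt-1}, which is what~\eqref{constraint-BDF3-dt} encodes.
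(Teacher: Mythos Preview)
Your proposal is correct and is precisely the elaboration the paper gestures at: the paper itself omits the proof entirely, merely stating that the $H^{s}$ estimate ``could be performed in a similar style as in the $\ell^\infty(0,T;H^1)$ estimate'' and leaving the details to the reader. Your inductive scheme, test function $(-\Delta_N)^{s}(3\omega^{n+1}-2\omega^{n})$, treatment of the force term via the splitting $|2\pi\mathbf{k}|^{2s}=|2\pi\mathbf{k}|^{s-1}\cdot|2\pi\mathbf{k}|^{s+1}$, and the interpolation device bringing the nonlinear contribution down to $\|\nabla_N^{s+1}\omega^\ell\|_2^{1+\delta}$ all mirror the $H^{1}$ argument exactly, and your observation that no new time-step restriction appears (because the $H^{1}$-level nonlinear bound is obtained via Young's inequality rather than via a smallness condition on $\dt$) is also consistent with the paper's structure.
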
  
	\begin{rem}
	In the theoretical derivation, it is clear that all the generative constants, from $C_3$, $C_4$, $C_{5, \nu}$, $C_6$ and $C_7$, singularly depends on $\nu^{-1}$ in a polynomial pattern. In turn, a larger value of $\nu >0$ leads to milder values of these generative constants, so that the global-in-time analysis becomes easier. Therefore, we only focus on the case of a smaller value of $0 < \nu \le 1$, since the estimate with $\nu > 1$ will become easier.

In fact, in the derivation of \eqref{est-BDF3-L2-8-9}, the assumption of $\nu \le 1$ is for simplicity of presentation. In the case of $\nu > 1$, we are able to replace the coefficient of $\beta_0 \nu$ by $\beta_0$ on the left hand side of \eqref{est-BDF3-L2-8-9}, and all the subsequent estimates could similarly go through. As a result, a less $nu$-dependent global-in-time $\ell^2$ and $H_h^1$ bounds will be established for the numerical solution, in the case of $nu >1$.

In the practical scientific computation problems, it is well known that a small value of $\nu > 0$ corresponds to a higher Reynolds number, and the scientific computing becomes more challenging. Because of this fact, we focus on the theoretical analysis with $0 < \nu \le 1$ for simplicity of presentation.

With the above arguments, we see that there is no need to modify the statement of Theorems 2.1 and 3.1.
	\end{rem}
	\begin{rem}
		With a uniform-in-time $H^m$ bound derived for the numerical solution \eqref{scheme-BDF3-1}-\eqref{scheme-BDF3-3}, a statistical convergence is expected to be available, using similar techniques presented in \cite{gottlieb2012stability,wang2012efficient}. The details are left to future works.
	\end{rem}

	\section{An optimal rate convergence analysis for the BDF3 scheme} 
	
	Now we proceed into the convergence analysis. Denote $(\omega_e, \u_e, \psi_e)$ as the exact solution to the original NSE system~\eqref{NSE1}-\eqref{NSE3}. With an initial data with sufficient regularity, the exact solution is assumed to preserve a regularity of class $\mathcal{R}$: 
	\begin{equation}
		\omega_{e} \in \mathcal{R} := C^4 (0,T; C^0) \cap H^3 (0, T; H^1) \cap L^\infty (0,T; H^{m+2}) . 
		\label{assumption:regularity.1}
	\end{equation}
	
	To facilitate the error estimate, we need to ensure the numerical error is mean free. Define $\U_N (\, \cdot \, ,t) := {\cal P}_N \u_e (\, \cdot \, ,t)$, $\ppsi_N (\, \cdot \, ,t) := {\cal P}_N \psi_e (\, \cdot \, ,t)$, $\pomega_N (\, \cdot \, ,t) := {\cal P}_N \omega_e (\, \cdot \, ,t)$, the (spatial) Fourier projection of the exact solution into ${\cal B}^K$, in the velocity, stream function, and vorticity variables, respectively.  The following projection approximation is standard: if $\omega_e \in L^\infty(0,T;H^\ell_{\rm per}(\Omega))$, for some $\ell\in\mathbb{N}$,
	\begin{equation} 
		\| \pomega_N - \omega_e \|_{L^\infty(0,T;H^k)}  
		\le C h^{\ell-k} \| \omega_e \|_{L^\infty(0,T;H^\ell)},  \quad \forall \ 0 \le k \le \ell . 
		\label{projection-est-0} 
	\end{equation} 
	By $\pomega_N^m$ we denote $\pomega_N(\, \cdot \, , t^m)$, with $t^m = m\cdot \dt$. Since $\pomega_N \in {\cal B}^K$, the zero-average property is available at the discrete level: 
	\begin{equation} 
		\overline{\pomega_N^m} = \frac{1}{|\Omega|}\int_\Omega \, \pomega_N ( \cdot, t^m) \, d {\bf x}
		= 0 ,  \quad \forall \ m \in\mathbb{N}.  
		\label{mass conserv-1} 
	\end{equation} 
	On the other hand, the solution of the BDF3 scheme~\eqref{scheme-BDF3-1}-\eqref{scheme-BDF3-3} is also zero-average at the discrete level, as stated in Proposition~\ref{prop:average}:  
	\begin{equation} 
		\overline{\omega^m} = 0 ,  \quad \forall \ m \in \mathbb{N} .  
		\label{mass conserv-2} 
	\end{equation} 
	Meanwhile, we use the mass conservative projection for the initial data:  %$u^0 = U^0 = {\mathcal P}_h U_N (\, \cdot \, , t=0)$, that is
	\begin{equation}
		\omega^0_{i,j} = ( \pomega_N^0 )_{i,j} := \pomega_N (x_i, y_j, t=0) . 
		\label{initial data-0}
	\end{equation}	
	The error grid functions, for the vorticity, velocity and stream function variables, are defined as 
	\begin{equation} 
		e_\omega^m := \pomega_N^m - \omega^m ,  \, \, \, 
		e_{\u}^m := \U_N^m - \u^m = (e_u^m , e_v^m ), \, \, \, 
		e_\psi^m := \ppsi_N^m - \psi^m ,  \quad \forall \ m \in \mathbb{N}  .   
		\label{error function-1}
	\end{equation} 
	Therefore, it follows that  $\overline{e_\omega^m} =0$, for any $m \in \mathbb{N}$. % so that the discrete norm $\nrm{ \, \cdot \, }_{-1}$ is well defined for the error grid function. 
	
	For the third order BDF scheme~\eqref{scheme-BDF3-1}-\eqref{scheme-BDF3-3}, the $\ell^\infty (0, T; \ell^2) \cap \ell^2 (0, T; H^1)$ convergence result is stated below. 
	
	\begin{thm}
		\label{thm:convergence}
		Given initial data $\pomega_N^{0} \in C_{\rm per}^{m +2} (\overline{\Omega})$, with periodic boundary conditions, suppose the unique solution for the the NSE equation~\eqref{NSE1}-\eqref{NSE3} is of regularity class $\mathcal{R}$. Then, provided $\dt$ and $h$ are sufficiently small, %smallness argument
		for all positive integers $k$, such that $\dt \cdot k \le T$, the following convergence estimate is valid: 
		\begin{equation}
			\| e_{\omega}^k \|_2 +  \Big( \nu \dt   \sum_{j=1}^{k} \| \nabla_N e_{\omega}^j \|_2^2 \Big)^\frac12  
			\le C ( \dt^3 + h^m ),   \label{convergence-0}
		\end{equation}
		where $C>0$ depends on the final time $T$, but is independent of $\dt$ and $h$.
	\end{thm}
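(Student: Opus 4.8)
The plan is to transcribe the long-time stability argument of Section~3 to the error equation, feeding in the uniform-in-time bounds of Theorem~\ref{thm:stability} wherever the a-priori assumption~\eqref{a priori-1} was used there. \emph{Step~1 (consistency).} First I would show that the spatial Fourier projection $\pomega_N$ of the exact vorticity, together with $\U_N := {\cal P}_N \u_e$, satisfies the scheme~\eqref{scheme-BDF3-1}-\eqref{scheme-BDF3-3} up to a local truncation error $\tau^{n+1}$. Three sources enter $\tau^{n+1}$: the BDF3 approximation of $\del_t$, which is $O(\dt^3)$ and is controlled pointwise in time by $\del_t^4 \omega_e \in L^\infty(0,T;L^2)$ (from $\omega_e \in C^4(0,T;C^0)$); the Adams--Bashforth extrapolation of the convection term, which is $O(\dt^3)$ in an $\ell^2$-in-time sense and is controlled by the third time derivatives of the nonlinearity, hence by $\omega_e \in H^3(0,T;H^1)$; and the spatial/aliasing error, which is $O(h^m)$. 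For the last one, since $\nabla_N,\Delta_N$ act exactly on ${\cal B}^K$ and ${\cal P}_N$ commutes with $\del_t$ and $\Delta$, the only discrepancies come from replacing $\omega_e,\u_e$ by their projections in the nonlinear term and from the failure of the discrete Leibniz rule, i.e. $\nabla_N\cdot(\U_N\pomega_N) \ne \U_N\DOT\nabla_N\pomega_N$; both are bounded by $Ch^m \|\omega_e\|_{L^\infty(0,T;H^{m+2})}$ via the projection estimate~\eqref{projection-est-0} and the aliasing bound of Lemma~\ref{lem:aliasing}. Altogether $\dt \sum_n \|\tau^{n+1}\|_2^2 \le C(\dt^6 + h^{2m})$.

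\emph{Step~2 (error equation and linearised nonlinearity).} Subtracting the scheme from the consistency relation gives a BDF3 evolution for $e_\omega^n = \pomega_N^n - \omega^n$ with forcing $\tau^{n+1}$; the nonlinear contribution at each level $\ell \in \{n,n-1,n-2\}$ is $B(\U_N^\ell,\pomega_N^\ell) - B(\u^\ell,\omega^\ell)$, where $B(\a,g) := \a\DOT\nabla_N g + \nabla_N\cdot(\a g) - \overline{\a\DOT\nabla_N g}$ is bilinear. Using bilinearity I would split this \emph{exactly} as $B(e_{\u}^\ell,\pomega_N^\ell) + B(\u^\ell, e_\omega^\ell)$, so that each term carries exactly one error factor and no quadratic-in-error term ever appears; in particular no a-priori bound on the error is needed. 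Applying Lemma~\ref{lem:convection} to each piece --- with $e_{\u}^\ell$, resp. $\u^\ell$, as the discretely divergence-free field and the continuous extension of $e_\omega^\ell$, resp. $\pomega^\ell$, as its associated vorticity --- together with the discrete Poincar\'e inequality, the bound $\|\pomega_N^\ell\|_{H^\delta} \le C\|\omega_e\|_{L^\infty(0,T;H^{m+2})}$, and the uniform-in-time bound $\|\pomega^\ell\|_{H^\delta} \le \tilde{C}_1$ furnished by Theorem~\ref{thm:stability}, I get
\[
\| B(\U_N^\ell,\pomega_N^\ell) - B(\u^\ell,\omega^\ell) \|_2 \le \Gamma \, \| \nabla_N e_\omega^\ell \|_2 ,
\]
with $\Gamma$ a fixed constant depending only on $\Omega$, $\nu$, $\|\omega_e\|_{L^\infty(0,T;H^{m+2})}$ and the stability constants. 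This is precisely the structure of~\eqref{a priori-2}, with $\gamma_0\tilde{C}_1$ replaced by $\Gamma$; the average-subtraction terms, moreover, contribute nothing to the energy identity since $\overline{e_\omega^\ell} = 0$, exactly as in~\eqref{average-5}.

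\emph{Step~3 (energy estimate and conclusion).} I would then take the discrete inner product of the error equation with $3e_\omega^{n+1} - 2e_\omega^n$, mirroring Section~3.1: the combined telescope inequality~\eqref{BDF3-telescope-3} handles the temporal term and produces an energy quantity ${\cal E}^n$ built from the $e_\omega^\ell$ exactly as $F^n$ in~\eqref{est-BDF3-L2-8-6} is built from the $\omega^\ell$, plus the positive jump terms $\|e_\omega^{n+1}-e_\omega^n\|_2^2$; identity~\eqref{est-BDF3-L2-3} handles the diffusion term; the Cauchy--Schwarz inequality with~\eqref{lem 2-1} handles $\langle \tau^{n+1}, 3e_\omega^{n+1}-2e_\omega^n\rangle$; and the nonlinear terms are absorbed by Cauchy--Young precisely as in~\eqref{est-BDF3-L2-5-3}--\eqref{est-BDF3-L2-7-2}, now with $\Gamma$ in place of $\gamma_0\tilde{C}_1$. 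Under a time-step restriction $\dt \le c\,\nu/\Gamma^2$ (the analogue of~\eqref{constraint-BDF3-dt-1}), which together with the constraint~\eqref{constraint-BDF3-dt} of Theorem~\ref{thm:stability} is what ``$\dt$ sufficiently small'' means here, the jump terms are absorbed by the telescope positivity and the $\nabla_N e_\omega^{n-1},\nabla_N e_\omega^{n-2}$ terms by the shift used in~\eqref{est-BDF3-L2-8-5}, leaving
\[
{\cal E}^{n+1} + c_1 \nu\dt \| \nabla_N e_\omega^{n+1} \|_2^2 \le {\cal E}^n + C \nu^{-1} \dt \, \|\tau^{n+1}\|_2^2 .
\]
Since Step~2 left no undifferentiated $\|e_\omega^\ell\|_2^2$ on the right, plain summation over $n$ (no Gr\"onwall factor required), combined with the truncation bound of Step~1 and a well-prepared start-up (so that $e_\omega^0 = 0$ by~\eqref{initial data-0} and ${\cal E}^n \le C(\dt^6+h^{2m})$ for $n \le 2$), gives ${\cal E}^k \le C(T)(\dt^6 + h^{2m})$ and $\nu\dt\sum_j \|\nabla_N e_\omega^j\|_2^2 \le C(T)(\dt^6 + h^{2m})$. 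Using $\|e_\omega^k\|_2 \le \alpha_1^{-1}({\cal E}^k)^{1/2}$ yields~\eqref{convergence-0}, and a further application of~\eqref{projection-est-0} upgrades it to the full error $\|\omega_e^k - \omega^k\|_2 \le C(\dt^3 + h^m)$.

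\emph{Main obstacle.} The only genuinely new work is Step~1: establishing that the skew-averaged, Adams--Bashforth-extrapolated pseudo-spectral convection term is a third-order-in-time, $O(h^m)$-in-space approximation of $\u_e\DOT\nabla\omega_e$. This requires tracking the factor $2$ produced by the Temam symmetrisation against the halved extrapolation weights $\tfrac32,-\tfrac32,\tfrac12$, so that the effective weights $3,-3,1$ form a consistent third-order extrapolation, and controlling the aliasing discrepancy between $\nabla_N\cdot(\u\omega)$ and $\u\DOT\nabla_N\omega$ through Lemma~\ref{lem:aliasing} and the regularity of $\omega_e$. Once $\tau^{n+1}$ is under control, the rest is essentially a transcription of the stability proof: the decisive point is that the exact bilinear splitting of Step~2, together with the uniform-in-time $H^\delta$ bounds of Theorem~\ref{thm:stability}, makes the nonlinear error terms structurally identical to those already estimated in Section~3.1.
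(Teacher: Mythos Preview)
Your overall plan---derive a consistency relation, subtract to get an error equation, split the nonlinear error bilinearly, and then rerun the Section~3.1 energy argument---is the same as the paper's. The gap is in Step~3, and it invalidates your ``no Gr\"onwall'' claim.

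You assert that with the bound $\|B(\U_N^\ell,\pomega_N^\ell)-B(\u^\ell,\omega^\ell)\|_2 \le \Gamma\,\|\nabla_N e_\omega^\ell\|_2$ in hand, the nonlinear inner products can be handled ``precisely as in~\eqref{est-BDF3-L2-5-3}--\eqref{est-BDF3-L2-7-2}''. But the very first step in~\eqref{est-BDF3-L2-5-2}--\eqref{est-BDF3-L2-5-3} is the skew-symmetry reduction $\langle B(\u^n,\omega^n),3\omega^{n+1}-2\omega^n\rangle = 3\langle B(\u^n,\omega^n),\omega^{n+1}-\omega^n\rangle$, which rests on $\langle B(\u^n,\omega^n),\omega^n\rangle=0$. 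For the error equation the analogous identity would be $\langle B(\U_N^n,\pomega_N^n)-B(\u^n,\omega^n),\,e_\omega^n\rangle=0$, and this is \emph{false}: only the piece $B(\u^n,e_\omega^n)$ is skew against $e_\omega^n$ (via~\eqref{lem 2-2}), whereas $\langle B(e_{\u}^n,\pomega_N^n),e_\omega^n\rangle$ has no reason to vanish. Without that reduction you cannot pair $\Gamma\,\|\nabla_N e_\omega^\ell\|_2$ with the jump $\|e_\omega^{n+1}-e_\omega^n\|_2$; you are stuck with the full test function $3e_\omega^{n+1}-2e_\omega^n$. If you then bound this by Poincar\'e and Cauchy, you produce $O(1)\cdot\dt\,\|\nabla_N e_\omega^\ell\|_2^2$ terms on the right whose coefficients do \emph{not} carry a factor of $\dt$ and hence cannot be absorbed by the $\nu\dt\,\|\nabla_N e_\omega^{n+1}\|_2^2$ diffusion for small $\nu$; if instead you Cauchy against $\|3e_\omega^{n+1}-2e_\omega^n\|_2$ directly, you land on undifferentiated $\|e_\omega^k\|_2^2$ terms after all.

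The fix is exactly what the paper does: treat the two pieces of the nonlinear error differently. For $B(\u^\ell,e_\omega^\ell)$ use the skew-symmetry reduction and Lemma~\ref{lem:convection} with the uniform bound $\|\pomega^\ell\|_{H^\delta}\le\tilde C_1$, giving the jump-controlled terms as in~\eqref{est-BDF3-L2-5-3}--\eqref{est-BDF3-L2-7-2}. For $B(e_{\u}^\ell,\pomega_N^\ell)$ abandon Lemma~\ref{lem:convection} and instead use the $W^{1,\infty}$ bound~\eqref{assumption:W1-infty bound} on $\pomega_N^\ell$ together with the elliptic regularity $\|e_{\u}^\ell\|_2\le\gamma^*\|e_\omega^\ell\|_2$; this yields harmless $\dt\,\|e_\omega^k\|_2^2$ terms on the right, which are then closed by a discrete Gr\"onwall inequality. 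The constant $C$ in~\eqref{convergence-0} consequently grows like $e^{CT}$, which is why the theorem statement says $C$ depends on $T$.
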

For the sake of readability, the detailed proof is relegated to Appendix~A.

\section{Numerical experiments} 
All numerical experiments in this section are conducted on periodic domains, 
which constitute the natural setting for the Fourier pseudo-spectral discretization adopted in this work.
Benchmark problems involving no-slip wall boundaries, such as the lid-driven cavity flow, 
would require fundamentally different spatial discretizations and are therefore not considered here.
\subsection{Convergence, conservation and long-time stability tests}
We consider the NSE \eqref{NSE1}-\eqref{NSE3} equipped with the following sufficiently smooth initial conditions \cite{wang2022linearly}:
\begin{equation}\label{4.1}
	\left\{\begin{aligned}
		& u(x,y,0) = \sin(2\pi x)\cos(2\pi y), \\
		& v(x,y,0) = -\cos(2\pi x)\sin(2\pi y), \quad(x,y)\in[0,1]\times[0,1].
	\end{aligned}\right.
\end{equation}
The initial condition for $\omega$ is deduced from the initial condition \eqref{4.1} for the velocity, i.e.,
\begin{equation*}
	\omega = \omega_0 = \partial_xv(x,y,0) - \partial_yu(x,y,0),
\end{equation*}
then the numerical solution is compared to the exact Taylor-Green vortex solution of the NSE \eqref{NSE1}-\eqref{NSE3}:
\begin{equation}\nonumber
	\left\{\begin{aligned}
		& u_e(x,y,t) = \sin(2\pi x)\cos(2\pi y)\exp(-8\nu\pi^2t), \\
		& v_e(x,y,t) = -\cos(2\pi x)\sin(2\pi y)\exp(-8\nu\pi^2t).
	\end{aligned}\right.
\end{equation}
For spatial discretization, we use uniform mesh with $N_x = N_y = 128$. In temporal discretization, we set the final time $T=1$ and several time-step size $\Delta t_i = 0.02\times2^{-i}~(i=0,\ldots,4)$. 
We employ the second-order Runge-Kutta method for computing $\omega_1$ and the IMEX-BDF2 method for computing $\omega_2$, which ensures the third order convergence in time. We set $\nu = 0.001$ to demonstrate the performance of the scheme for the convection-dominated NSE. 
Note that only one FFT-based Poisson solver is needed at each time step and the computational efficiency is significantly enhanced because of the explicit extrapolation.
The numerical results are plotted in Figs \ref{fig1} and \ref{fig2}, where
\begin{figure}[htbp]
	\centering
	\subfigure{\includegraphics[width=.4\linewidth]{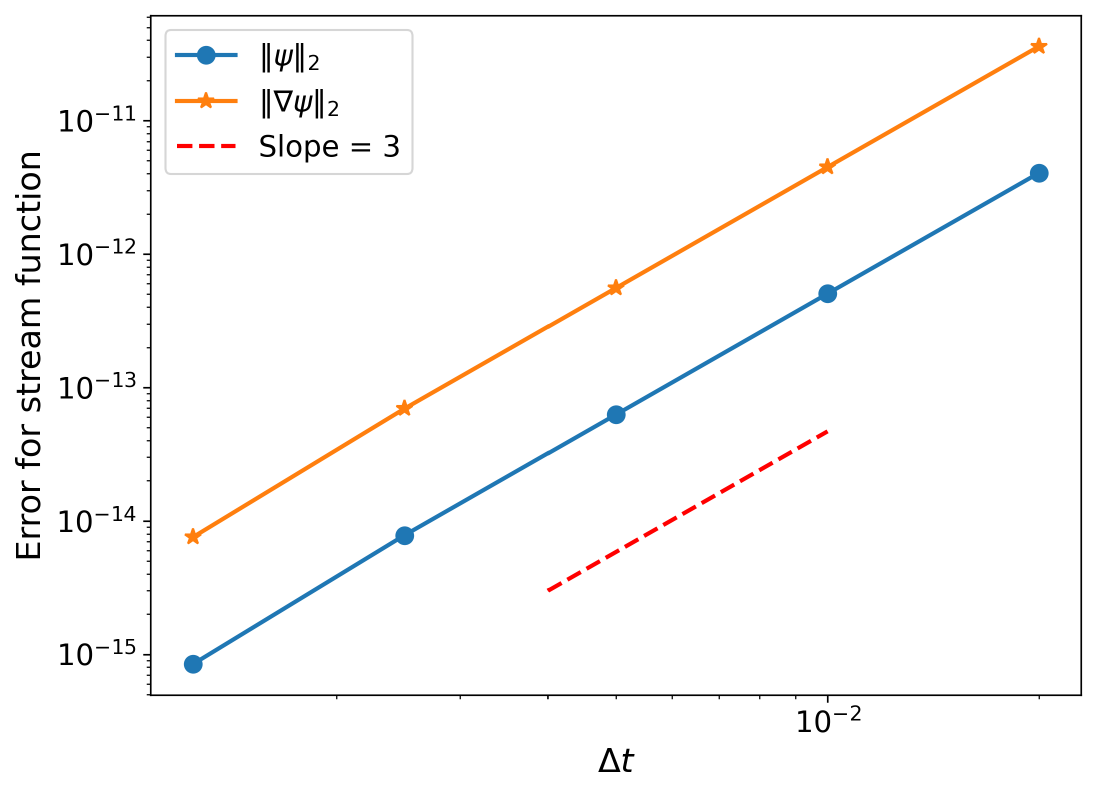}}
	\subfigure{\includegraphics[width=.4\linewidth]{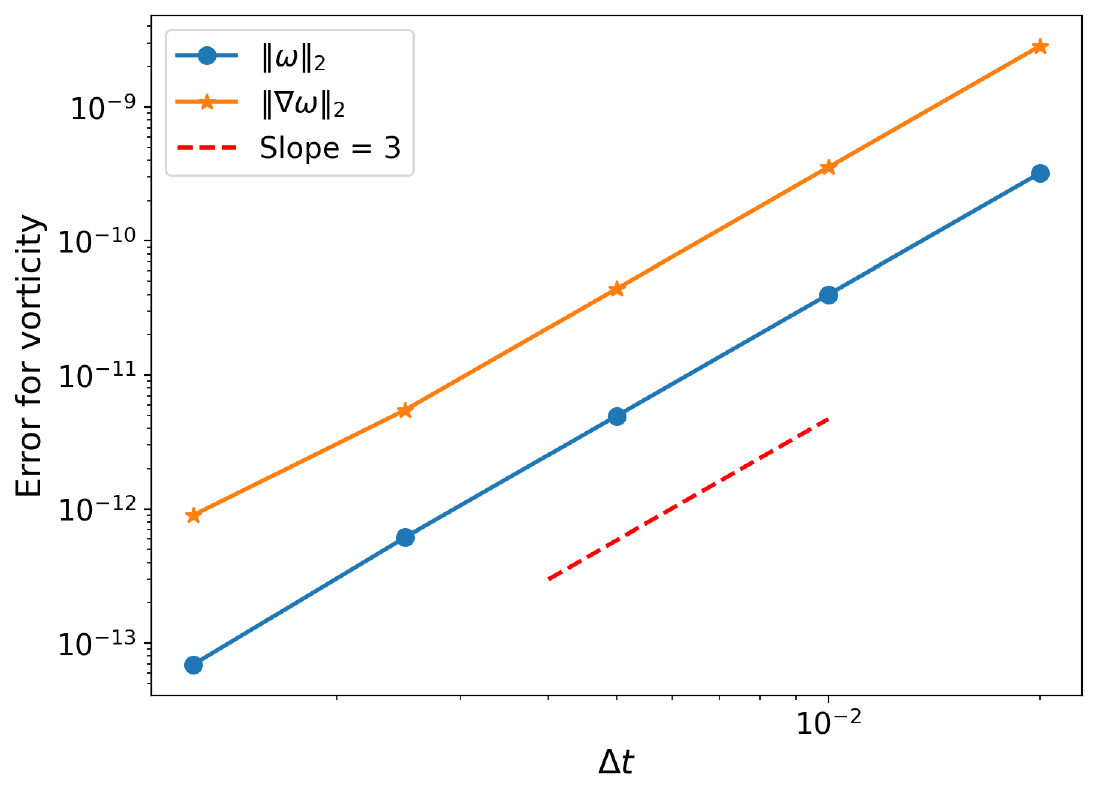}}
	\caption{$\ell^{\infty}(0,T;L^2)$ and $\ell^2(0,T;H^1)$ errors for stream function and vorticity by the IMEX BDF3 scheme \eqref{scheme-BDF3-1}-\eqref{scheme-BDF3-3}.}
	\label{fig1}
\end{figure} 
\begin{figure}[htbp]
	\centering
	\subfigure{\includegraphics[width=.4\linewidth]{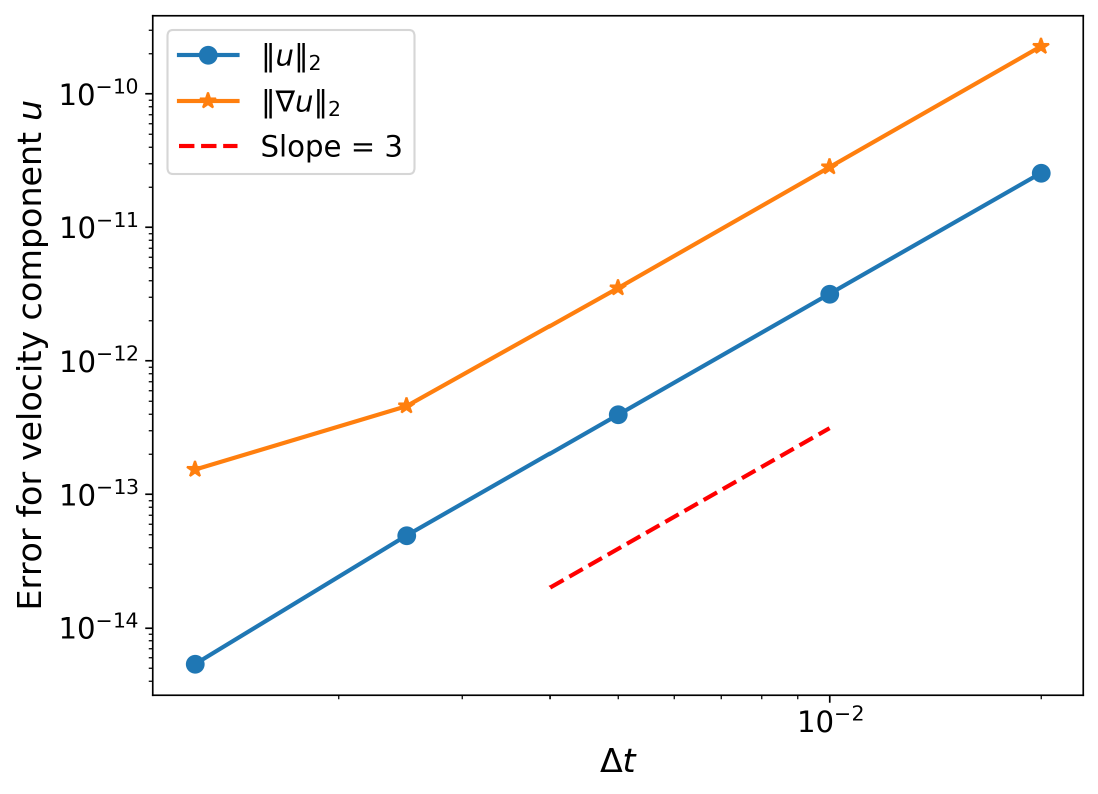}}
	\subfigure{\includegraphics[width=.4\linewidth]{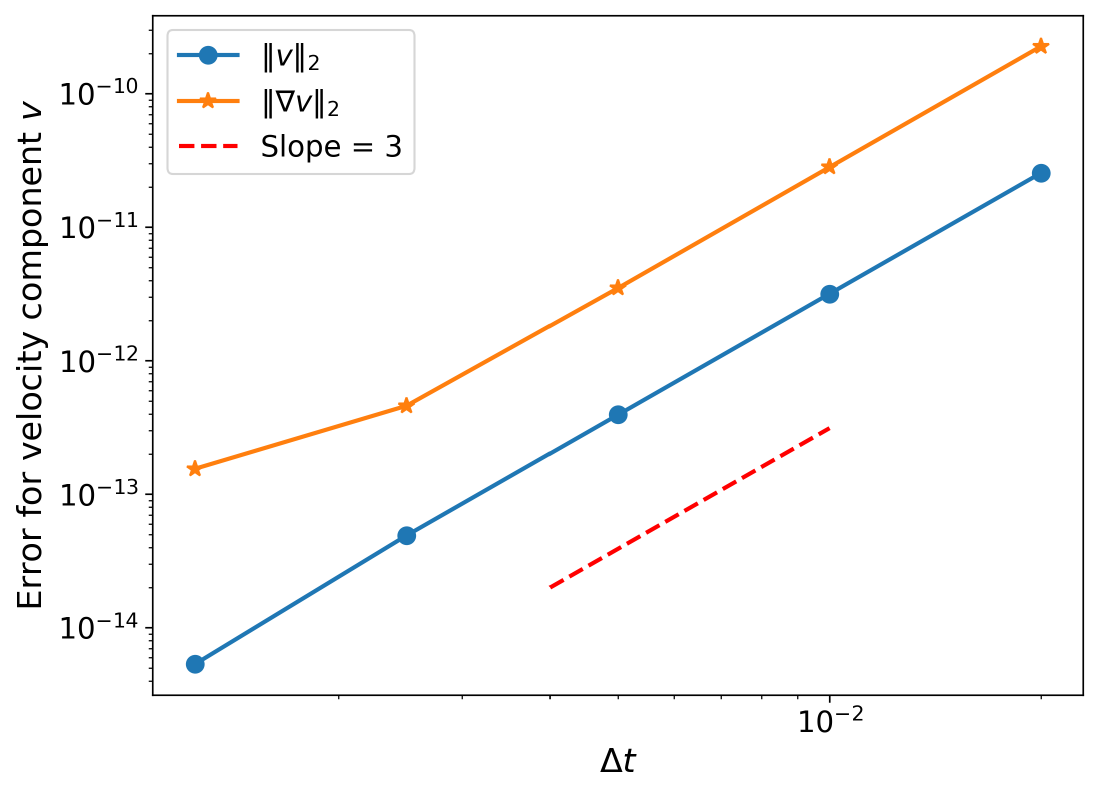}}
	\caption{$\ell^{\infty}(0,T;L^2)$ and $\ell^2(0,T;H^1)$ errors for velocity by the IMEX BDF3 scheme \eqref{scheme-BDF3-1}-\eqref{scheme-BDF3-3}.}
	\label{fig2}
\end{figure}
From Figures~\ref{fig1} and~\ref{fig2}, it is observed that the stream function, vorticity, and velocity exhibit the same error behavior; that is, the scheme is of optimal order 3 in both $\ell^{\infty}(0,T;L^2)$- and $\ell^2(0,T;H^1)$-norms, and a strong verification of Theorem \ref{thm:convergence} is given by these results.

Next, we present long-time numerical simulation results by performing the computation up to $T=100$. The local-in-time convergence result presented in \eqref{convergence-0} holds limited theoretical relevance for such extended time scales, as the associated convergence constant grows exponentially with the final time \( T \). To better understand the long-time behavior, we instead analyze the temporal evolution of the vorticity profile by monitoring its discrete \( L^2 \) and \( H^1 \) norms. The time evolution plot is presented in Figure \ref{fig3}, in which one could clearly observe that all monitored energy norms remain uniformly bounded over the entire simulation interval. This numerical simulation validates the long-time stability analysis given by Theorem \ref{thm:stability}.

In addition to the vorticity norms, we also monitor several physically relevant quantities, including the discrete kinetic energy 
\[
E(t)=\frac12\| \bm{u}(t)\|_2^2,
\]
the enstrophy 
\[
Z(t)=\frac12\|\omega(t)\|_2^2,
\]
and the discrete divergence error $\|\nabla_N\cdot \bm{u}\|_2$. For the periodic incompressible NSE without external forcing, 
the kinetic energy is expected to decay monotonically due to viscosity, while the divergence error should remain close to machine precision. In Figure \ref{extra}, the discrete divergence error is shown throughout the simulation and remains at the level of machine accuracy for all times, 
confirming that the proposed scheme preserves the incompressibility constraint in practice.

\begin{figure}[htbp]
	\centering
	\includegraphics[width=1\linewidth]{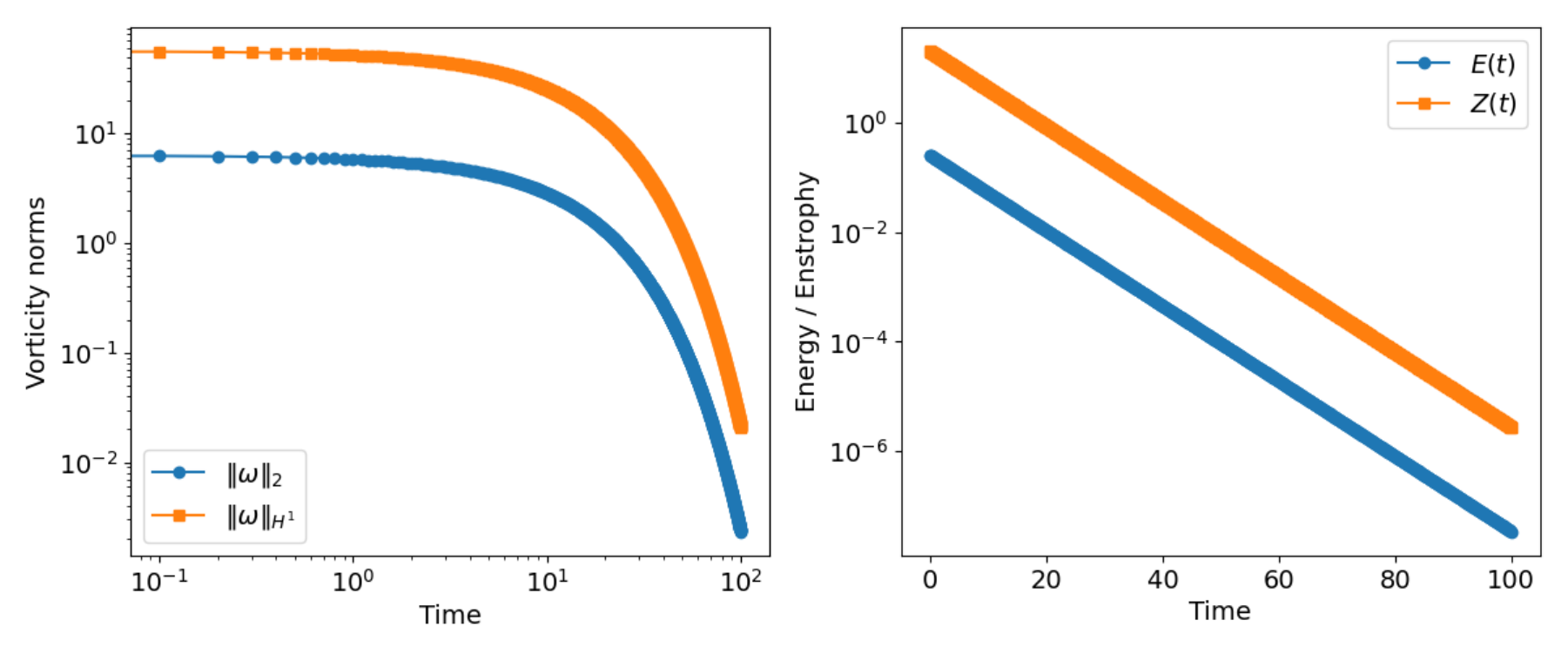}
\caption{Long-time diagnostics for the Taylor-Green vortex up to $T=100$ computed by the IMEX BDF3 Fourier pseudo-spectral scheme.
Left: time evolution of the discrete vorticity norms $\|\omega(t)\|_{2}$ and $\|\omega(t)\|_{H^1}$, which remain uniformly bounded for all times.
Right: kinetic energy $E(t)=\frac12\|\bm{u}(t)\|_2^2$ and enstrophy $Z(t)=\frac12\|\omega(t)\|_2^2$, both exhibiting the physically consistent exponential decay induced by viscosity.
These results provide quantitative evidence of the long-time stability and robustness of the proposed scheme.}
	\label{fig3}
\end{figure}
\begin{figure}[htbp]
	\centering
	\includegraphics[width=0.6\linewidth]{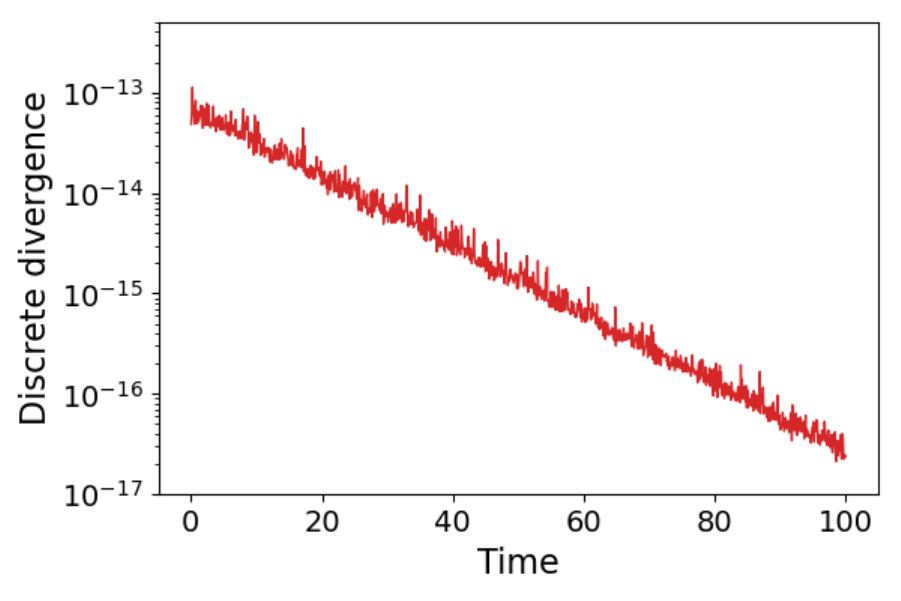}
\caption{Time evolution of the discrete divergence error 
$\|\nabla_N\cdot \bm{u}(t)\|_2$ for the long-time Taylor--Green vortex simulation.
The divergence error remains at the level of machine precision for all times, 
indicating that the incompressibility constraint is preserved in the discrete sense.}
	\label{extra}
\end{figure}
\subsection{Benchmark problem: double shear layer flow}
We further examine the robustness of the proposed IMEX BDF3 scheme in convection-dominated regimes by considering the classical double shear layer benchmark on the periodic domain $[0,1]^2$ \cite{minion1997performance}. 
This problem is widely used to assess nonlinear stability and vortex roll-up dynamics at high Reynolds numbers under periodic boundary conditions, which is fully compatible with the Fourier pseudo-spectral discretization adopted in this work.

The initial velocity is given by
\begin{equation}
\begin{aligned}
u(x,y,0) &=
\begin{cases}
\tanh(\rho(y-0.25)), & y\le 0.5,\\
\tanh(\rho(0.75-y)), & y>0.5,
\end{cases}
\qquad
v(x,y,0)=\delta\sin(2\pi x),
\end{aligned}
\end{equation}
where $\rho$ controls the layer thickness and $\delta$ is the perturbation amplitude.

\paragraph{Thick layer case (high Reynolds number)}
We first consider $\rho=30$ with $\nu=10^{-4}$ (corresponding to $Re=10^4$). 
The computation is performed with $128\times128$ Fourier modes up to $T=1.2$. 
Figure~\ref{fig4} compares the vorticity snapshots obtained by the first-, second-, and third-order IMEX schemes using the same time step. 
The IMEX Euler scheme produces an incorrect vortex pattern, and the IMEX BDF2 scheme exhibits visible spurious oscillations, whereas the proposed IMEX BDF3 scheme remains stable and captures the expected vortex roll-up structure.

\paragraph{Thin layer case (more challenging, higher Reynolds number)}
We then test the thin layer setting with $\rho=100$ and $\nu=5\times10^{-5}$ (i.e., $Re=2\times10^4$), using $256\times256$ Fourier modes and $\Delta t=4\times10^{-4}$ up to $T=1.2$. 
Figure~\ref{fig5} shows representative vorticity contours, which are in qualitative agreement with existing reference results reported in \cite{bell1989second,huang2021stability,di2023variable}. 
To further quantify robustness at high Reynolds numbers, we additionally report the time evolution of $\|\omega(t)\|_{\infty}$, demonstrating that the computation remains stable without nonphysical energy growth or high-frequency contamination.
Overall, these results indicate that the proposed IMEX BDF3 scheme offers a substantially relaxed time-step restriction and strong nonlinear stability for standard high-Re vortex-dynamics benchmarks on periodic domains.

As shown in Figure~\ref{fig:omega_inf_shearlayer}, $\|\omega(t)\|_{\infty}$ remains bounded and evolves smoothly in time without spurious growth or blow-up, 
which provides additional evidence of the nonlinear stability of the proposed IMEX BDF3 scheme in convection-dominated regimes.
\begin{figure}[htbp]
	\centering
	\subfigure[1st order]{\includegraphics[width=2.1in]{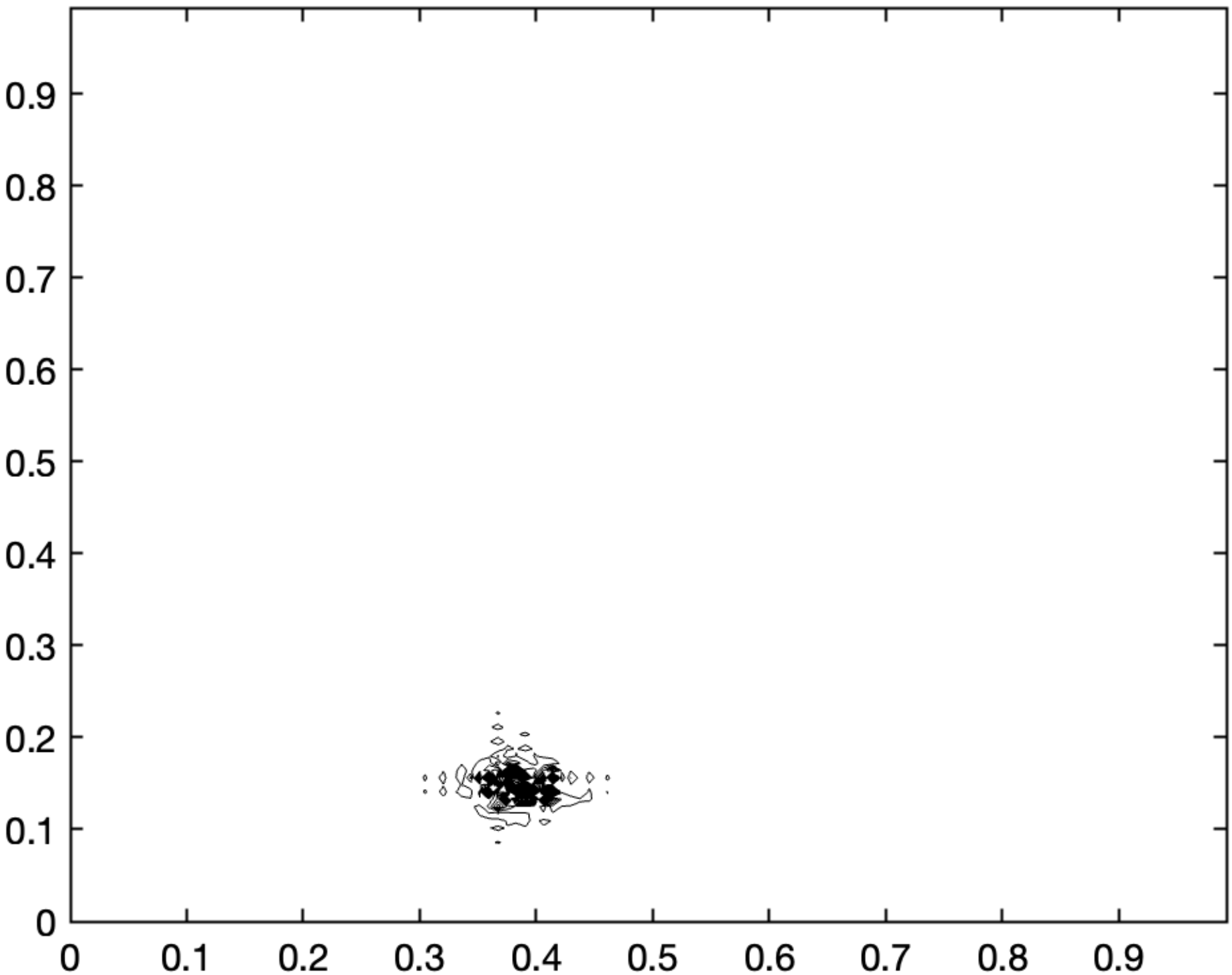}}
	\subfigure[2nd order]{\includegraphics[width=2.1in]{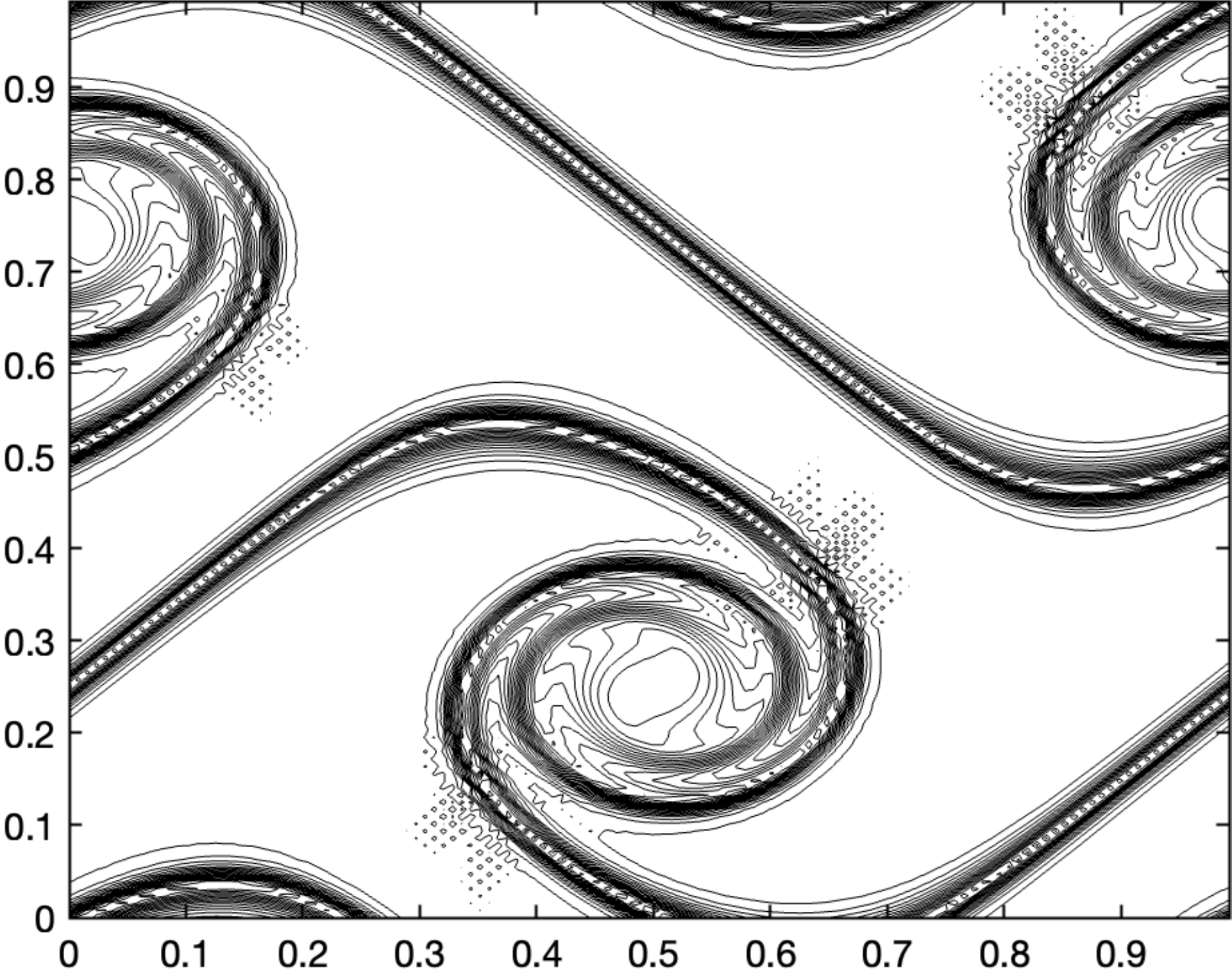}}
	\subfigure[3rd order]{\includegraphics[width=2.1in]{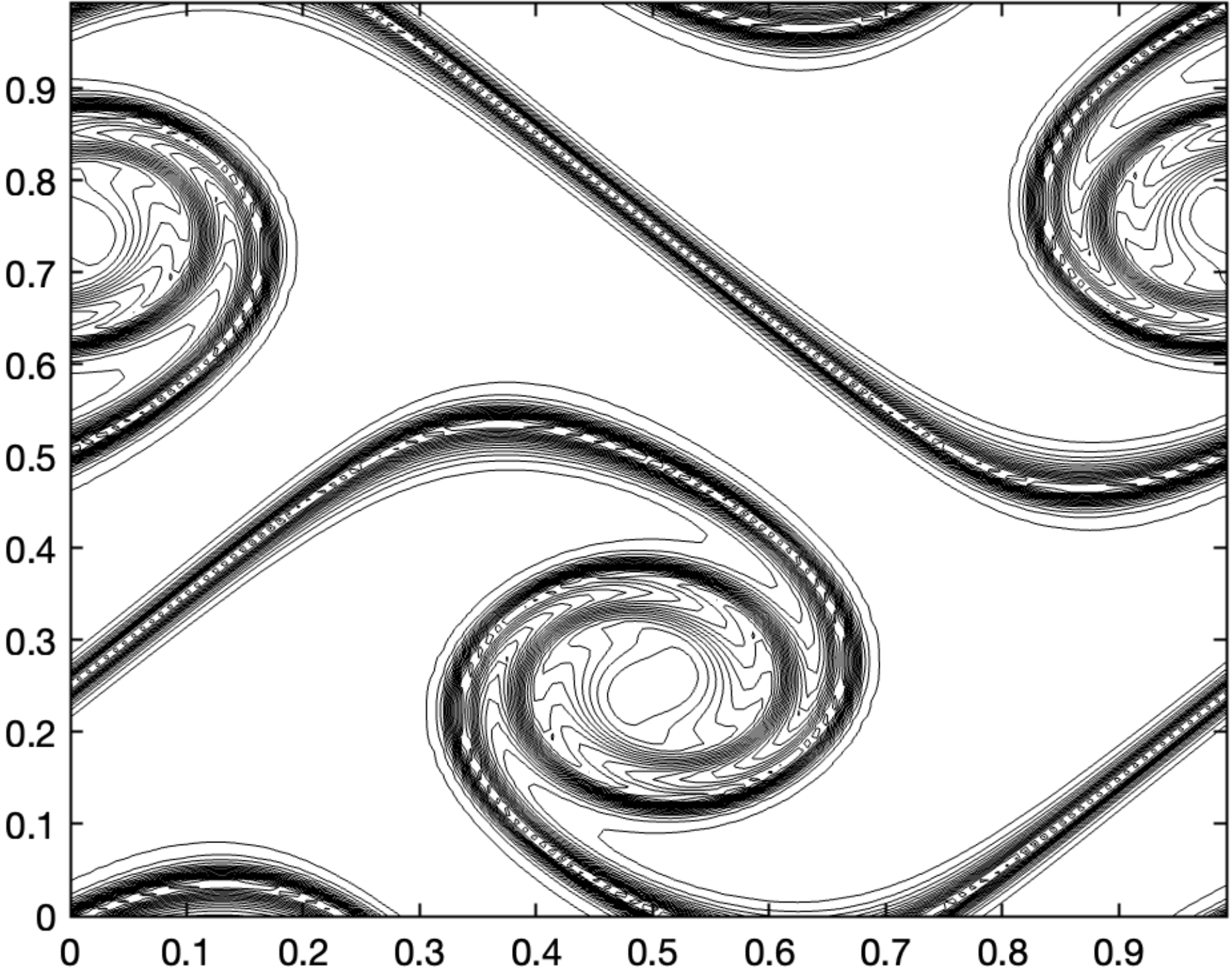}}
\caption{Snapshots of the vorticity field for the thick double shear layer benchmark at the final time $T=1.2$ ($\Delta t = 8\times 10^{-4}$), 
computed using the IMEX Euler scheme (left), IMEX BDF2 scheme (middle), and IMEX BDF3 scheme (right). 
While the first-order and second-order schemes exhibit noticeable numerical distortions, 
the third-order IMEX BDF3 scheme yields a smooth and physically consistent vortex structure, 
demonstrating its improved stability and robustness for convection-dominated flows.}
	\label{fig4}
\end{figure} 
\begin{figure}[htbp]
	\centering
	\subfigure[$t=0.2$]{\includegraphics[width=2.1in]{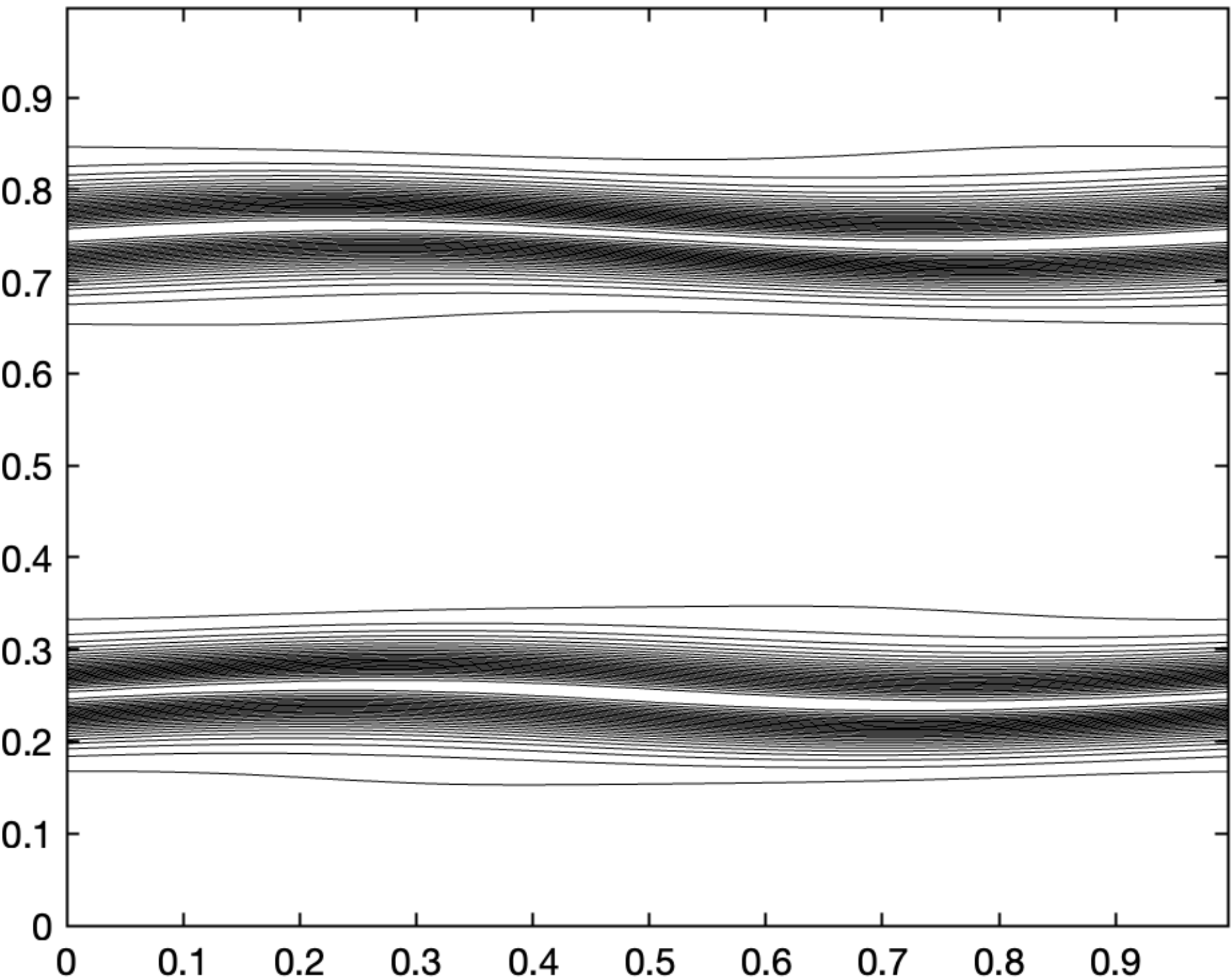}}
	\subfigure[$t=0.4$]{\includegraphics[width=2.1in]{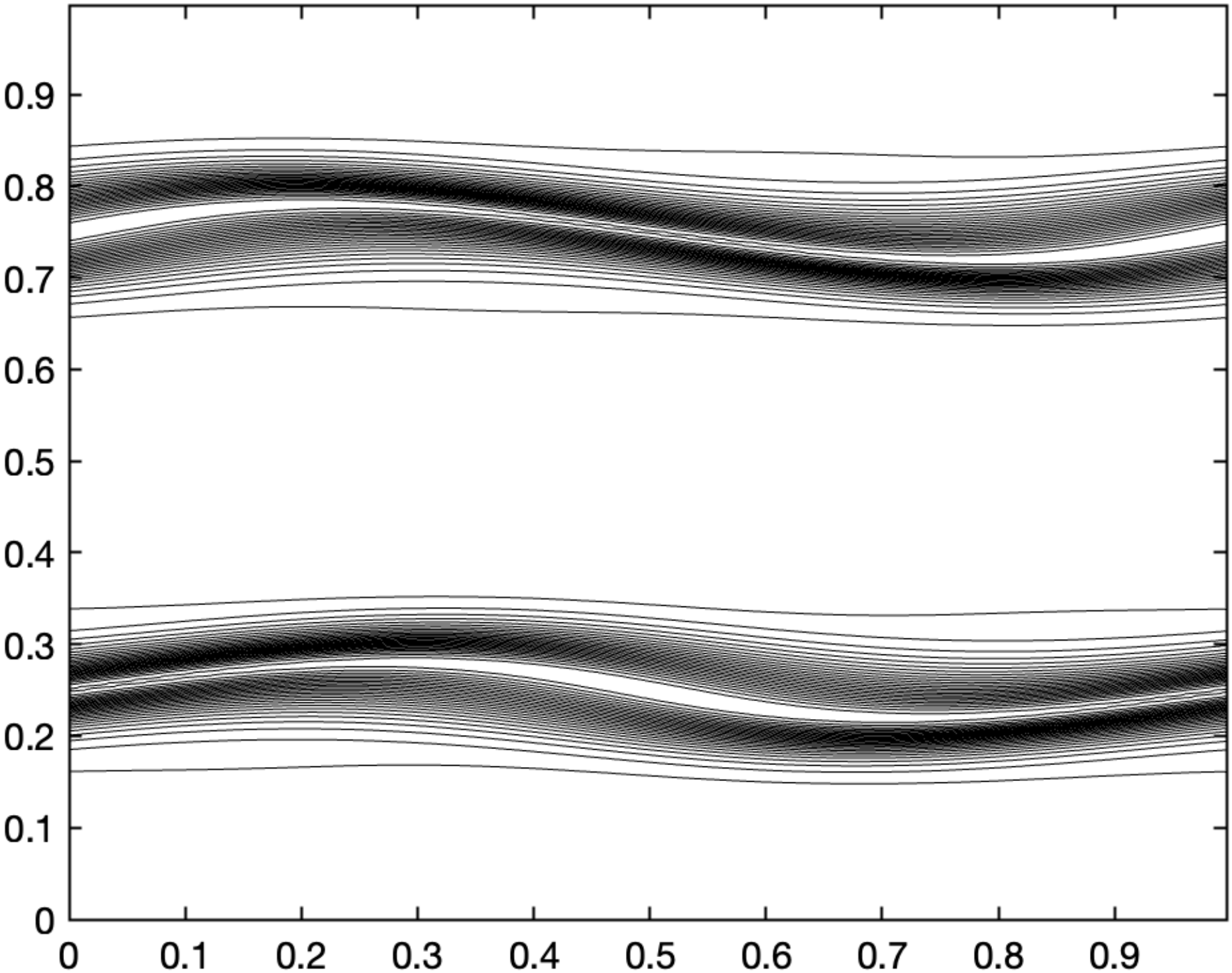}}
	\subfigure[$t=0.6$]{\includegraphics[width=2.1in]{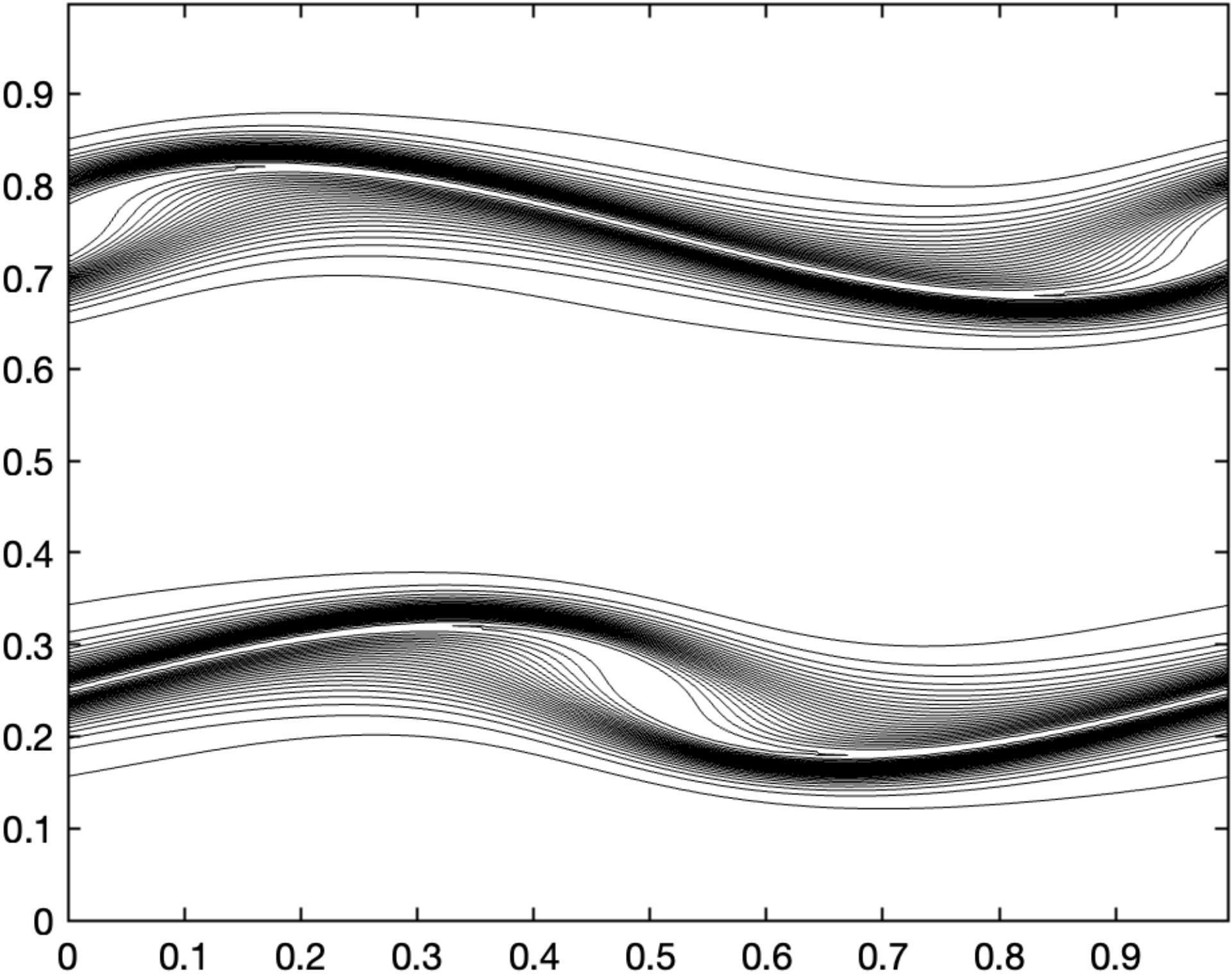}}
	\quad
	\subfigure[$t=0.8$]{\includegraphics[width=2.1in]{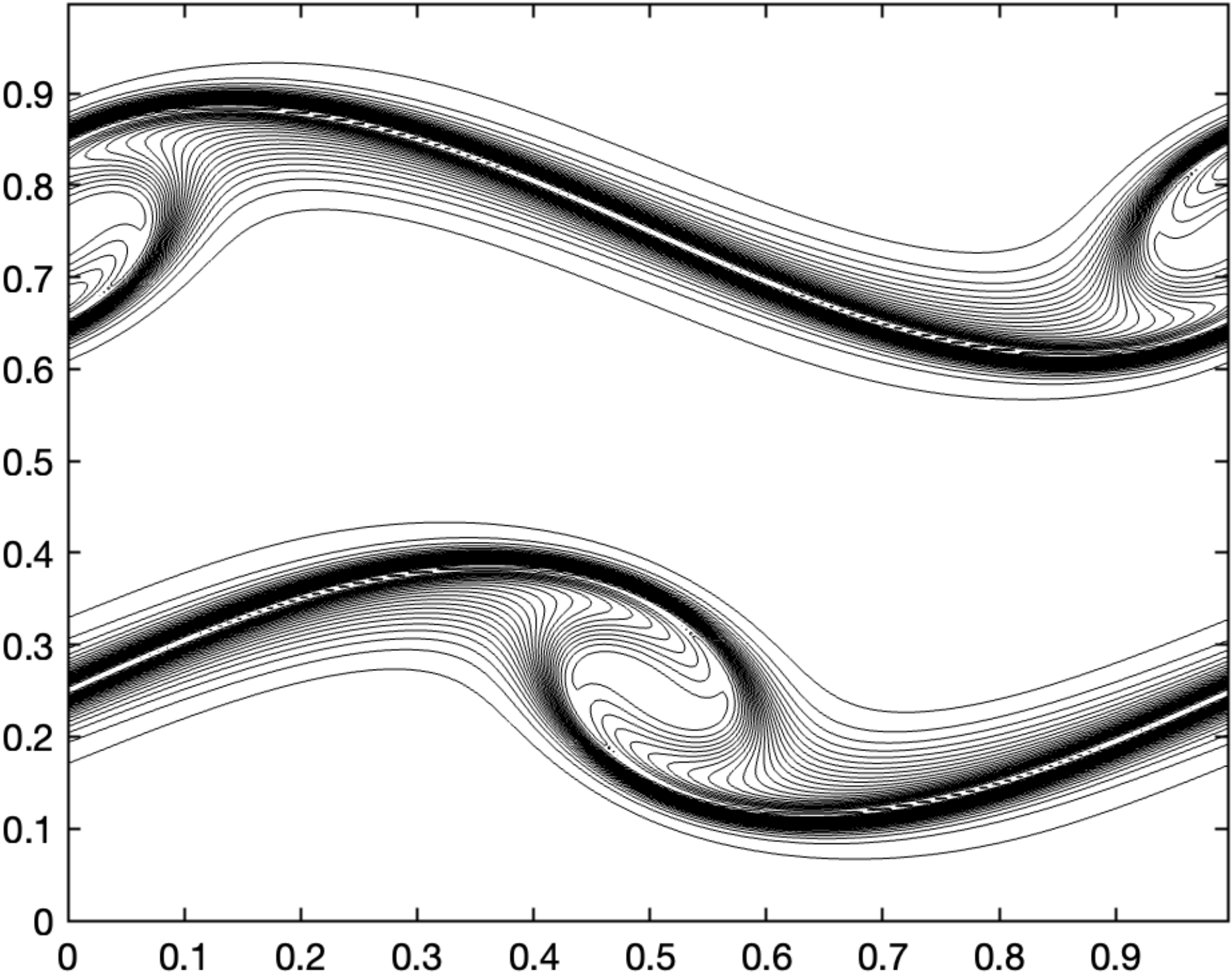}}
	\subfigure[$t=1.0$]{\includegraphics[width=2.1in]{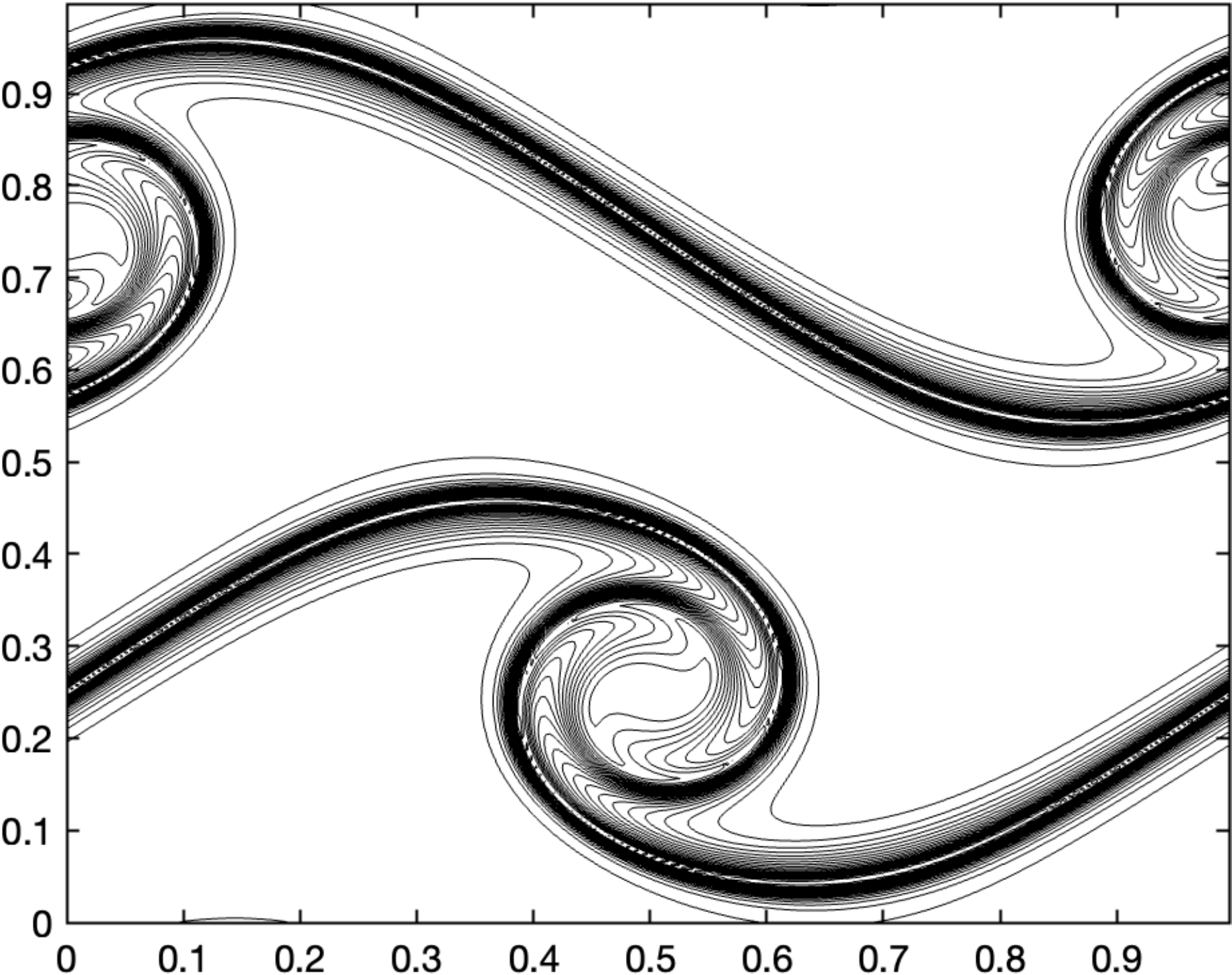}}
	\subfigure[$t=1.2$]{\includegraphics[width=2.1in]{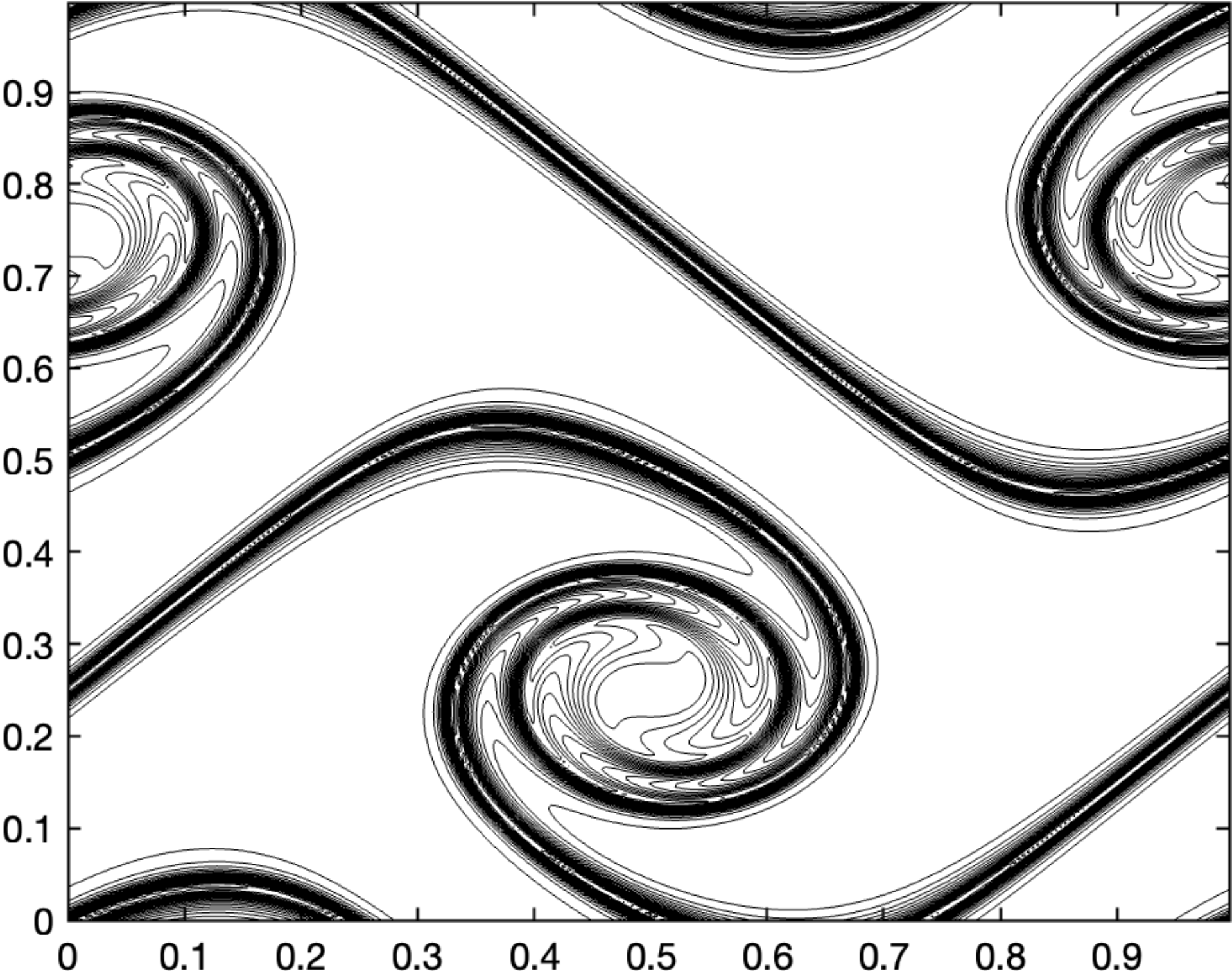}}
\caption{Snapshots of the vorticity field for the thin double shear layer benchmark at different times, 
computed using the third-order IMEX BDF3 scheme~\eqref{scheme-BDF3-1}-\eqref{scheme-BDF3-3} with $\nu=5\times10^{-5}$. 
The numerical solution exhibits the characteristic roll-up and interaction of vortical structures, 
and is qualitatively consistent with previously reported results in the literature.}
	\label{fig5}
\end{figure} 
\begin{figure}[htbp]
\centering
\includegraphics[width=0.6\linewidth]{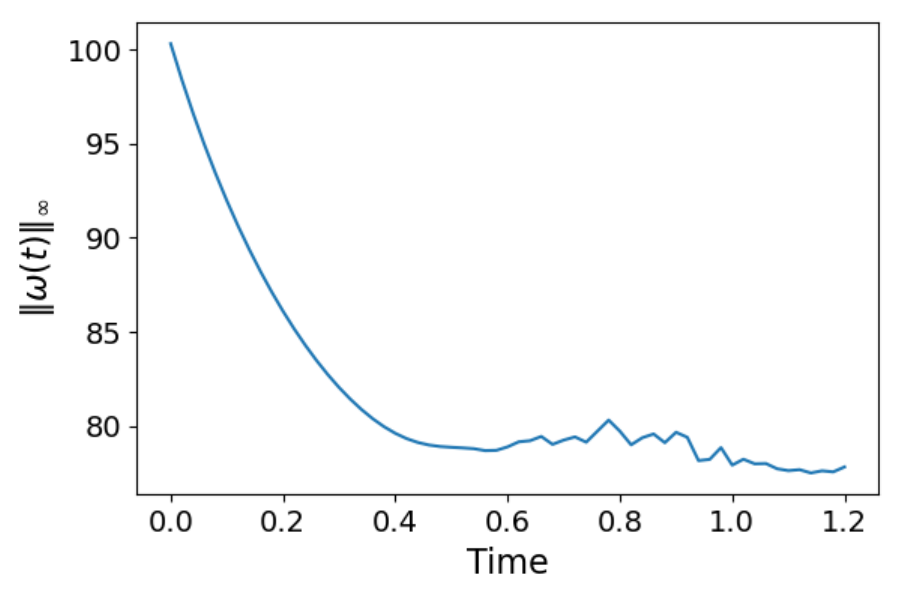}
\caption{Time evolution of the maximum vorticity magnitude $\|\omega(t)\|_{\infty}$ for the thin double shear layer benchmark.
The quantity $\|\omega(t)\|_{\infty}$ remains bounded and shows no spurious growth over the whole simulation interval.}
\label{fig:omega_inf_shearlayer}
\end{figure}

\section{Concluding remarks}
In this paper, we proposed an IMEX BDF3 scheme for the 2-D incompressible Navier-Stokes equation. This scheme improves the computational efficiency on the premise of ensuring stability and accuracy. In addition, we provided a long-time stability analysis for the vorticity and a rigorous convergence analysis for the fully discrete scheme, combined with Fourier pseudo-spectral spatial approximation. The numerical experiments verified the main theorems and demonstrated the performance of the proposed scheme.
\section*{Acknowledgement}
This work was supported by the Natural Science Foundation of China (Grant Nos. 12271523, 12371374, 12501504), and National University of Defense Technology (Grant Nos: 2023-lxy-fhjj-002, 25-ZZCX-JDZ-13), and the Hunan Province Postgraduate Research and Innovation Project (Grant No. XJJC2025003), and the Fundamental Research Funds for the Central Universities of Civil Aviation Flight University of China (Grant No. 25CAFUC03055).
\section*{Declaration of competing interest}
The authors declare that they have no known competing financial interests or personal relationships that could have appeared to influence the work reported in this paper.
\section*{Data availability}
Data will be made available on request.
\appendix
\renewcommand{\theequation}{A.\arabic{equation}}
\setcounter{equation}{0}
\section*{Appendix A. Proof of Theorem \ref{thm:convergence}}
		For the Fourier projection solution $(\pomega_N, \U_N, \ppsi_N)$ and their point-wise interpolation, a careful consistency analysis implies that 
		\begin{align} 
			&
			\frac{\frac{11}{6} \pomega_N^{n+1} - 3 \pomega_N^n 
				+ \frac32 \pomega_N^{n-1} - \frac13 \pomega_N^{n-2}}{\dt}  
			+ \frac32 ( \U_N^n \DOT \nabla_N \pomega_N^n  
			+ \nabla_N \cdot ( \U_N^n \pomega_N^n )  )   \nonumber\\
			& - \frac32 ( \U_N^{n-1} \DOT \nabla_N \pomega_N^{n-1}  
			+ \nabla_N \cdot ( \U_N^{n-1} \pomega_N^{n-1} ) )   +  \frac12 ( \U_N^{n-2} \DOT \nabla_N \pomega_N^{n-2}  
			+ \nabla_N \cdot ( \U_N^{n-2} \pomega_N^{n-2} )  ) \nonumber 
			\\
			=& \nu \Delta_N  \pomega_N^{n+1} 
			+ \f_N^{n+1} + \tau^{n+1} ,   \label{BDF3-consistency-1}
			\\
			&
			\Delta_N \ppsi_N^{n+1} = \pomega_N^{n+1} ,   \label{BDF3-consistency-2} 
			\\
			&
			\U_N^{n+1} = \nabla_N^{\bot} \ppsi_N^{n+1}  
			= \left( {\cal D}_{Ny} \ppsi_N^{n+1} , {\cal D}_{Nx} \ppsi_N^{n+1} \right) ,  
			\label{BDF3-consistency-3}
		\end{align}  
		with $\| \tau^{n+1} \|_2 \le C (\dt^3 + h^m)$. In turn, subtracting the numerical algorithm~\eqref{scheme-BDF3-1}-\eqref{scheme-BDF3-3} from the consistency estimate~\eqref{BDF3-consistency-1}-\eqref{BDF3-consistency-3} gives 
		\begin{align} 
			&
			\frac{\frac{11}{6} e_\omega^{n+1} - 3 e_\omega^n 
				+ \frac32 e_\omega^{n-1} - \frac13 e_\omega^{n-2}}{\dt}  
			+ \frac32 {\cal NLE}^n - \frac32 {\cal NLE}^{n-1}   
			+  \frac12 {\cal NLE}^{n-2} 
			= \nu \Delta_N  e_\omega^{n+1} 
			+ \tau^{n+1} ,   \label{BDF3-error-1}
			\\
			&
			\Delta_N e_\psi^{n+1} = e_\omega^{n+1} ,   \label{BDF3-error-2} 
			\\
			&
			e_{\u}^{n+1} = \nabla_N^{\bot} e_\psi^{n+1}  
			= \left( {\cal D}_{Ny} e_\psi^{n+1} , {\cal D}_{Nx} e_\psi^{n+1} \right) ,  
			\label{BDF3-error-3} 
			\\
			& 
			{\cal NLE}^\ell = e_{\u}^\ell \DOT \nabla_N \pomega_N^\ell  
			+ \u^\ell \DOT \nabla_N e_\omega^\ell 
			+ \nabla_N \cdot ( e_{\u}^\ell \pomega_N^\ell + \u^\ell e_\omega^\ell)  
			+ \overline{\u^\ell \DOT \nabla_N \u^\ell} ,  \quad \ell = n, n-1, n-2 . 
			\label{BDF3-error-4} 
		\end{align}  
		
		Due to the regularity requirement~\eqref{assumption:regularity.1} for the exact solution, we see that the projection solution $( \pomega_N, \U_N, \ppsi_N)$ also preserves this regularity, which comes from the approximation estimate~\eqref{projection-est-0}. In turn, a discrete $W_h^{1,\infty}$ norm will stay bounded for $\pomega_N$: 
		\begin{equation} 
			\| \pomega_N^k \|_\infty \le C^*  , \, \, \, 
			\| \nabla_N \pomega_N^k \|_\infty \le C^*,  \quad \forall k \ge 0 .   
			\label{assumption:W1-infty bound}
		\end{equation}  
		In terms of the numerical solution $\u^\ell$ for the velocity vector, we recall that its continuous extension is given by $\U^\ell$, and the associated vorticity variable (in the continuous extension) becomes $\pomega^\ell$. The global-in-time energy stability analysis has recovered that 
		\begin{equation} 
			\| \pomega^\ell \|_{H^\delta} \le \tilde{C}_1 ,  \quad \forall \ell \ge 0 . 
			\label{convergence-a priori-1} 
		\end{equation} 
		Moreover, for the numerical error functions $(e_\omega^\ell, e_{\u}^\ell, e_\psi^\ell)$, for the vorticity, velocity and stream function variables, we assume the associated continuous extension functions as $(e_{\pomega}^\ell, e_{\U}^\ell, e_{\ppsi}^\ell) \in {\cal B}^K$. Based on the kinematic equation $\Delta e_{\ppsi}^\ell = e_{\pomega}^\ell$, $e_{\U}^\ell = \nabla^{\bot} e_{\ppsi}^\ell$, and a careful application of elliptic regularity implies that 
		\begin{equation} 
			\| e_{\u}^\ell \|_2 = \| e_{\U}^\ell \| = \| \nabla^{\bot} e_{\ppsi}^\ell \| \le \| e_{\ppsi}^\ell \|_{H^1} 
			\le C \| e_{\ppsi}^\ell \|_{H^2} \le \gamma^* \| \Delta e_{\ppsi}^\ell \| 
			= \gamma^* \| e_{\pomega}^\ell \| = \gamma^* \| e_\omega^\ell \|_2 . 
			\label{convergence-a priori-2} 
		\end{equation} 
		Subsequently, an application of inequality~\eqref{lem 3-0} (in Lemma~\ref{lem:convection}) implies that 
		\begin{equation} 
			\| \u^\ell \DOT \nabla_N e_\omega^\ell 
			+ \nabla_N \cdot ( \u^\ell e_\omega^\ell )  \|_2 
			\le \gamma_0 \| \pomega^\ell \|_{H^\delta}  \cdot \| \nabla_N e_\omega^\ell \|_2 
			\le \gamma_0 \tilde{C}_1 \| \nabla_N e_\omega^\ell \|_2 , \quad \ell = n, n-1, n-2 . 
			\label{convergence-a priori-3} 
		\end{equation} 
		
		Taking a discrete inner product with~\eqref{BDF3-error-1} by $3 e_\omega^{n+1} - 2 e_\omega^n$ yields 
		\begin{align} 
			& 
			\left\langle \frac{11}{6} e_\omega^{n+1} - 3 e_\omega^n 
			+ \frac32 e_\omega^{n-1} - \frac13 e_\omega^{n-2} ,
			3 e_\omega^{n+1} - 2 e_\omega^n \right\rangle   
			+ \nu \dt   \left\langle \nabla_N  e_\omega^{n+1}  , 
			\nabla_N ( 3 e_\omega^{n+1} - 2 e_\omega^n ) \right\rangle 
			\nonumber 
			\\
			= & 
			\dt \Big\langle - \frac32 {\cal NLE}^n + \frac32 {\cal NLE}^{n-1} 
			+ \frac12 {\cal NLE}^{n-2} , 3 e_\omega^{n+1} - 2 e_\omega^n \Big\rangle   
			+  \dt \left\langle  \tau^{n+1} , 3 e_\omega^{n+1} - 2 e_\omega^n \right\rangle  .  
			\label{convergence-1}
		\end{align} 
		
		In terms of the temporal differentiation for the numerical error, an application of the combined telescope formula~\eqref{BDF3-telescope-3} gives  
		\begin{equation} 
			\begin{aligned} 
				&\Big\langle \frac{11}{6} e_\omega^{n+1} - 3 e_\omega^n 
				+ \frac32 e_\omega^{n-1} - \frac13 e_\omega^{n-2} ,
				3 e_\omega^{n+1} - 2 e_\omega^n \Big\rangle \\
				\ge &  \| \alpha_1 e_\omega^{n+1} \|_2^2 - \| \alpha_1 e_\omega^n \|_2^2
				+ \| \alpha_2 e_\omega^{n+1} + \alpha_3 e_\omega^n \|_2^2
				- \| \alpha_2 e_\omega^n + \alpha_3 e_\omega^{n-1} \|_2 ^2 + \| \alpha_4 e_\omega^{n+1} + \alpha_5 e_\omega^n + \alpha_6 e_\omega^{n-1} \|_2^2
				\\
				&
				- \| \alpha_4 e_\omega^n + \alpha_5 e_\omega^{n-1} + \alpha_6 e_\omega^{n-2} \|_2^2 
				+ \frac{13}{12}  \| e_\omega^{n+1} - e_\omega^n \|_2^2 
				- \frac{7}{12} \| e_\omega^n - e_\omega^{n-1} \|_2^2  - \frac16 \| e_\omega^{n-1} - e_\omega^{n-2} \|_2^2    \\
				&
				+ \frac{7}{12} \| e_\omega^{n+1} - 2 e_\omega^n + e_\omega^{n-1} \|_2^2 . 
			\end{aligned} 
			\label{convergence-2} 
		\end{equation} 
		The diffusion term could be analyzed in a similar fashion as in~\eqref{est-BDF3-L2-3}: 
		\begin{equation} 
			\begin{aligned}  
				& 
				\langle \nabla_N  e_\omega^{n+1}  , 
				\nabla_N ( 3 e_\omega^{n+1} - 2 e_\omega^n ) \rangle \\
				=&  \| \nabla_N e_\omega^{n+1} \|_2^2 
				+  2 \langle \nabla_N  e_\omega^{n+1}  , 
				\nabla_N ( e_\omega^{n+1} - e_\omega^n )  \rangle  
				\\
				= & 
				\| \nabla_N e_\omega^{n+1} \|_2^2 
				+  \|  \nabla_N  e_\omega^{n+1}  \|_2^2 - \| \nabla_N e_\omega^n \|_2^2 
				+ \| \nabla_N ( e_\omega^{n+1} - e_\omega^n )  \|_2^2 .   
			\end{aligned} 
			\label{convergence-3} 
		\end{equation} 
		The local truncation error term could be bounded by a direct application of Cauchy inequality: 
		\begin{equation} 
			\begin{aligned} 
				\left\langle  \tau^{n+1} , 3 e_\omega^{n+1}  - 2 e_\omega^n \right\rangle  
				\le& \| \tau^{n+1} \|_2  \cdot \| 3 e_\omega^{n+1}  - 2 e_\omega^n \|_2 \\
				\le& \frac12 \left( \| \tau^{n+1} \|_2^2 + \| 3 e_\omega^{n+1}  - 2 e_\omega^n \|_2^2 \right)     
				\\
				\le& 
				\frac12 \| \tau^{n+1} \|_2^2 + \frac{15}{2} \| e_\omega^{n+1} \|_2^2 
				+ 5 \| e_\omega^n \|_2^2 . 
			\end{aligned} 
			\label{convergence-4}
		\end{equation}
		
		To control the numerical error for the nonlinear convection, we focus the analysis at time step $t^n$. First, the following identity is observed, based on the fact that the numerical error function $e_\omega^k$ is $\ell^2$ orthogonal to any mass average profile: 
		\begin{equation} 
			\langle \overline{\u^n \DOT \nabla_N \u^n} ,  3 e_\omega^{n+1} - 2 e_\omega^n \rangle = 0 , 
			\quad \mbox{since $\overline{e_\omega^k}=0$} ,  \quad \forall k \ge 0. 
			\label{convergence-5-1} 
		\end{equation} 
		In the nonlinear convection error expansion in~\eqref{BDF3-error-4}, we see that the estimate for the first and third terms turns out to be straightforward: 
		\begin{align} 
			& 
			- \langle e_{\u}^n \DOT \nabla_N \pomega_N^n , 
			3 e_\omega^{n+1} - 2 e_\omega^n \rangle  \nonumber\\
			\le& \| e_{\u}^n \|_2 \cdot \| \nabla_N \pomega_N^n \|_\infty 
			\cdot \| 3 e_\omega^{n+1} - 2 e_\omega^n \|_2   \nonumber 
			\\
			\le &   
			C^* \| e_{\u}^n \|_2 \cdot \| 3 e_\omega^{n+1} - 2 e_\omega^n \|_2   \quad 
			\mbox{(by~\eqref{assumption:W1-infty bound})}  \nonumber 
			\\
			\le & 
			\frac{C^*}{2} ( \| e_{\u}^n \|_2^2 + 15 \| e_\omega^{n+1} \|_2^2 
			+ 10 \| e_\omega^n \|_2^2 ) ,  \label{convergence-5-2} 
			\\
			&
			- \big\langle \nabla_N \cdot ( e_{\u}^n \pomega_N^n ), 
			3 e_\omega^{n+1} - 2 e_\omega^n \big\rangle  \nonumber\\
			=& \big \langle e_{\u}^n \pomega_N^n ,
			\nabla_N ( 3 e_\omega^{n+1} - 2 e_\omega^n ) \big\rangle  \nonumber 
			\\
			\le & 
			\| e_{\u}^n \|_2 \cdot \| \pomega_N^n \|_\infty  
			\cdot \| \nabla_N ( 3 e_\omega^{n+1} - 2 e_\omega^n ) \|_2 \nonumber\\
			\le& C^* \| e_{\u}^n \|_2  \cdot \| \nabla_N ( 3 e_\omega^{n+1} - 2 e_\omega^n ) \|_2  
			\quad  \mbox{(by~\eqref{assumption:W1-infty bound})}  \nonumber  
			\\
			\le & 
			12 ( C^* )^2 \nu^{-1} \| e_{\u}^n \|_2^2   
			+ \frac{\nu}{48}  \| \nabla_N ( 3 e_\omega^{n+1} - 2 e_\omega^n ) \|_2^2  
			\nonumber 
			\\
			\le & 
			12 ( C^* )^2 \nu^{-1} \| e_{\u}^n \|_2^2   
			+ \frac{\nu}{24}  \| \nabla_N e_\omega^{n+1} \|_2^2 
			+ \frac{\nu}{6} \| \nabla_N ( e_\omega^{n+1} - e_\omega^n ) \|_2^2  .  
			\label{convergence-5-3} 
		\end{align} 
		For the second and fourth nonlinear error expansion terms in~\eqref{BDF3-error-4}, we are able to follow similar ideas in the stability analysis. Of course, the following error equality is valid, which comes from~\eqref{lem 2-2}: 
		\begin{equation} 
			\left\langle  \u^n \DOT \nabla_N e_\omega^n  
			+ \nabla_N \cdot \left( \u^n e_\omega^n \right) , e_\omega^n  \right\rangle       
			= 0  . \label{convergence-5-4}
		\end{equation}
		This in turn gives 
		\begin{equation} 
			\begin{aligned} 
				& 
				- \frac32  \left\langle \u^n \DOT \nabla_N e_\omega^n 
				+ \nabla_N \cdot \left( \u^n e_\omega^n \right) , 3 e_\omega^{n+1} - 2 e_\omega^n \right\rangle   
				\\
				=&
				- \frac32 \left\langle \u^n \DOT \nabla_N e_\omega^n 
				+ \nabla_N \cdot \left( \u^n e_\omega^n \right) , 3 e_\omega^{n+1} - 3 e_\omega^n \right\rangle 
				\\
				=  & 
				- \frac92 \left\langle \u^n \DOT \nabla_N e_\omega^n 
				+ \nabla_N \cdot \left( \u^n e_\omega^n \right) ,  e_\omega^{n+1} -  e_\omega^n \right\rangle  
				\\
				\le & 
				\frac92 \| \u^n \DOT \nabla_N e_\omega^n 
				+ \nabla_N \cdot ( \u^n e_\omega^n ) \|_2 \cdot \| e_\omega^{n+1} -  e_\omega^n \|_2 
				\\
				\le & 
				\frac{9 \gamma_0 \tilde{C}_1}{2} 
				\| \nabla_N e_\omega^n \|_2 \cdot \| e_\omega^{n+1} - e_\omega^n \|_2  \quad 
				\mbox{(by the a-priori estimate~\eqref{convergence-a priori-3})} 
				\\
				\le & 
				\frac{\nu}{4}  \|  \nabla_N e_\omega^n  \|_2^2
				+ \frac{81}{4} \gamma_0^2 \tilde{C}_1^2 \nu^{-1}     
				\| e_\omega^{n+1} - e_\omega^n \|_2^2 . 
			\end{aligned}   
			\label{convergence-5-5} 
		\end{equation} 
		Therefore, a combination of~\eqref{convergence-5-1}-\eqref{convergence-5-5} results in 
		\begin{equation} 
			\begin{aligned} 
				&
				- \frac32 \langle {\cal NLE}^n , 3 e_\omega^{n+1} - 2 e_\omega^n \rangle  
				\\
				= & 
				- \frac32 \langle e_{\u}^n \DOT \nabla_N \pomega_N^n  
				+ \u^n \DOT \nabla_N e_\omega^n 
				+ \nabla_N \cdot ( e_{\u}^n \pomega_N^n + \u^n e_\omega^n)  
				+ \overline{\u^n \DOT \nabla_N \u^n} , 
				3 e_\omega^{n+1} - 2 e_\omega^n  \rangle  
				\\
				\le & 
				\left( \frac{3 C^*}{4} + 18 ( C^* )^2 \nu^{-1} \right) \| e_{\u}^n \|_2^2   
				+ \frac{3 C^*}{4} ( 15 \| e_\omega^{n+1} \|_2^2 
				+ 10 \| e_\omega^n \|_2^2 ) 
				+ \frac{\nu}{16}  \| \nabla_N e_\omega^{n+1} \|_2^2  
				\\
				& 
				+ \frac{\nu}{4} \| \nabla_N ( e_\omega^{n+1} - e_\omega^n ) \|_2^2
				+ \frac{\nu}{4}  \|  \nabla_N e_\omega^n  \|_2^2 
				+ \frac{81}{4} \gamma_0^2 \tilde{C}_1^2 \nu^{-1}     
				\| e_\omega^{n+1} - e_\omega^n \|_2^2 . 
			\end{aligned} 
			\label{convergence-5-6} 
		\end{equation} 
		
		The numerical error terms at time steps $t^{n-1}$ and $t^{n-2}$ could be similarly analyzed; the technical details are skipped for the sake of brevity. 
		\begin{equation} 
			\begin{aligned} 
				&
				\frac32 \langle {\cal NLE}^{n-1} , 3 e_\omega^{n+1} - 2 e_\omega^n \rangle  
				\\
				\le & 
				( \frac{3 C^*}{4} + 18 ( C^* )^2 \nu^{-1} ) \| e_{\u}^{n-1} \|_2^2   
				+ \frac{3 C^*}{4} ( 15 \| e_\omega^{n+1} \|_2^2 
				+ 10 \| e_\omega^n \|_2^2 ) 
				+ \frac{\nu}{16}  \| \nabla_N e_\omega^{n+1} \|_2^2  
				\\
				& 
				+ \frac{\nu}{4} \| \nabla_N ( e_\omega^{n+1} - e_\omega^n ) \|_2^2
				+ \frac{\nu}{4}  \|  \nabla_N e_\omega^{n-1}  \|_2^2 
				+ \frac94 \gamma_0^2 \tilde{C}_1^2 \nu^{-1}     
				\| e_\omega^{n+1} - e_\omega^{n-1} + 2 ( e_\omega^{n+1} - e_\omega^n ) \|_2^2 , 
			\end{aligned} 
			\label{convergence-6} 
		\end{equation} 
		\begin{equation} 
			\begin{aligned} 
				&
				- \frac12 \langle {\cal NLE}^{n-2} , 3 e_\omega^{n+1} - 2 e_\omega^n \rangle  
				\\
				\le & 
				( \frac{C^*}{4} + 2 ( C^* )^2 \nu^{-1} ) \| e_{\u}^{n-2} \|_2^2   
				+ \frac{C^*}{4} ( 15 \| e_\omega^{n+1} \|_2^2 
				+ 10 \| e_\omega^n \|_2^2 ) 
				+ \frac{\nu}{16}  \| \nabla_N e_\omega^{n+1} \|_2^2  
				\\
				& 
				+ \frac{\nu}{4} \| \nabla_N ( e_\omega^{n+1} - e_\omega^n ) \|_2^2
				+ \frac{3 \nu}{16}  \|  \nabla_N e_\omega^{n-2}  \|_2^2 
				+ \frac13 \gamma_0^2 \tilde{C}_1^2 \nu^{-1}     
				\| e_\omega^{n+1} - e_\omega^{n-2} + 2 ( e_\omega^{n+1} - e_\omega^n ) \|_2^2 .  
			\end{aligned} 
			\label{convergence-7} 
		\end{equation} 
		
		As a result, a substitution of~\eqref{convergence-2}-\eqref{convergence-4}, 
		\eqref{convergence-5-6}, \eqref{convergence-6} and \eqref{convergence-1} into \eqref{convergence-1} leads to 
		\begin{equation} 
			\begin{aligned} 
				&
				\| \alpha_1 e_\omega^{n+1} \|_2^2 - \| \alpha_1 e_\omega^n \|_2^2
				+ \| \alpha_2 e_\omega^{n+1} + \alpha_3 e_\omega^n \|_2^2
				- \| \alpha_2 e_\omega^n + \alpha_3 e_\omega^{n-1} \|_2 ^2 
				\\
				&
				+ \| \alpha_4 e_\omega^{n+1} + \alpha_5 e_\omega^n + \alpha_6 e_\omega^{n-1} \|_2^2
				- \| \alpha_4 e_\omega^n + \alpha_5 e_\omega^{n-1} + \alpha_6 e_\omega^{n-2} \|_2^2 
				\\
				& 
				+ \frac{13}{12}  \| e_\omega^{n+1} - e_\omega^n \|_2^2 
				- \frac{7}{12} \| e_\omega^n - e_\omega^{n-1} \|_2^2 
				- \frac16 \| e_\omega^{n-1} - e_\omega^{n-2} \|_2^2    
				\\
				& 
				+ \frac{29 \nu \dt}{16} \| \nabla_N e_\omega^{n+1} \|_2^2 
				- \frac{5 \nu \dt}{4} \| \nabla_N e_\omega^n \|_2^2 
				+ \frac14 \nu \dt \| \nabla_N ( e_\omega^{n+1} - e_\omega^n )  \|_2^2 
				\\
				\le & 
				\frac{C^* \dt}{4} ( 3 \| e_{\u}^n \|_2^2 + 3 \| e_{\u}^{n-1} \|_2^2  
				+ \| e_{\u}^{n-2} \|_2^2 ) 
				+ ( C^* )^2 \nu^{-1}  \dt ( 18 \| e_{\u}^n \|_2^2 
				+ 18  \| e_{\u}^{n-1} \|_2^2  + 2 \| e_{\u}^{n-2} \|_2^2 ) 
				\\
				& 
				+ \frac{(7 C^* +2) \dt}{4} ( 15 \| e_\omega^{n+1} \|_2^2 
				+ 10 \| e_\omega^n \|_2^2 ) 
				+ \frac{\nu \dt}{4} \| \nabla_N e_\omega^{n-1} \|_2^2 
				+ \frac{3 \nu \dt}{16} \| \nabla_N e_\omega^{n-2} \|_2^2 
				\\
				&   
				+ \frac{\dt}{2} \| \tau^{n+1} \|_2^2   
				+ \gamma_0^2 \tilde{C}_1^2 \nu^{-1} \dt  
				\Big( \frac{81}{4} \|  e_\omega^{n+1}  - e_\omega^n \|_2^2   
				+ \frac94 \|  e_\omega^{n+1}  - e_\omega^{n-1} 	+ 2 ( e_\omega^{n+1} - e_\omega^n ) \|_2^2   
				\\
				&  
				+ \frac13 \|  e_\omega^{n+1}  - e_\omega^{n-2} 
				+ 2 ( e_\omega^{n+1} - e_\omega^n ) \|_2^2  \Big) 
				\\
				\le & 
				\gamma_0^2 \tilde{C}_1^2 \nu^{-1} \dt  
				\Big( \frac{209}{4} \|  e_\omega^{n+1}  - e_\omega^n \|_2^2  
				+ \frac{32}{3} \|  e_\omega^n  - e_\omega^{n-1} \|_2^2  
				+ \frac53 \| e_\omega^{n-1} - e_\omega^{n-2}  \|_2^2 \Big)  
				\\
				&  
				+ \frac{\nu \dt}{4} \| \nabla_N e_\omega^{n-1} \|_2^2 
				+ \frac{3 \nu \dt}{16} \| \nabla_N e_\omega^{n-2} \|_2^2 
				+ \frac{\dt}{2} \| \tau^{n+1} \|_2^2   
				\\
				& 
				+ \breve{C}_1  \dt \|  e_\omega^{n+1}  \|_2^2 + \breve{C}_0  \dt \|  e_\omega^n  \|_2^2   
				+ \breve{C}_{-1}  \dt \|  e_\omega^{n-1}  \|_2^2  
				+ \breve{C}_{-2}  \dt \|  e_\omega^{n-2}  \|_2^2 , 
			\end{aligned} 
			\label{convergence-8-1} 
		\end{equation} 
		with 
		\begin{equation*} 
			\begin{aligned} 
				& 
				\breve{C}_1 = \frac{15 ( 7 C^* +2)}{4} , \quad 
				\breve{C}_0 = \frac14 \Big(10 ( 7 C^* +2) + (3 C^* +72 (C^*)^2 \nu^{-1} )  (\gamma^*)^2 \Big) ,  
				\\
				&
				\breve{C}_{-1} = \frac14 \Big( 3 C^* + 72 (C^*)^2 \nu^{-1} \Big) (\gamma^*)^2 , \quad 
				\breve{C}_{-2} = \frac14 \Big( C^* + 8 (C^*)^2 \nu^{-1} \Big) (\gamma^*)^2 . 
			\end{aligned} 
		\end{equation*} 
		In fact, the last step comes from the a-priori estimate~\eqref{convergence-a priori-2}, as well as a Cauchy inequality similar to~\eqref{est-BDF3-L2-8-2}. Of course, the above inequality could be rewritten as 
		\begin{equation} 
			\begin{aligned} 
				&
				\| \alpha_1 e_\omega^{n+1} \|_2^2 + \| \alpha_2 e_\omega^{n+1} + \alpha_3 e_\omega^n \|_2^2
				+ \| \alpha_4 e_\omega^{n+1} + \alpha_5 e_\omega^n + \alpha_6 e_\omega^{n-1} \|_2^2  
				+ \frac{29 \nu \dt}{16} \| \nabla_N e_\omega^{n+1} \|_2^2  
				\\
				& 
				+ \Big( \frac{13}{12}  - \frac{209}{4} \gamma_0^2 \tilde{C}_1^2 \nu^{-1} \dt \Big) 
				\| e_\omega^{n+1} -e_ \omega^n \|_2^2 
				+ \frac14 \nu \dt \| \nabla_N ( e_\omega^{n+1} - e_\omega^n )  \|_2^2  
				\\
				\le & 
				\| \alpha_1 e_\omega^n \|_2^2
				+ \| \alpha_2 e_\omega^n + \alpha_3 e_\omega^{n-1} \|_2 ^2 
				+ \| \alpha_4 e_\omega^n + \alpha_5 e_\omega^{n-1} + \alpha_6 e_\omega^{n-2} \|_2^2 
				\\
				& 
				+ \frac{5 \nu \dt}{4} \| \nabla_N e_\omega^n \|_2^2 
				+ \frac{\nu \dt}{4} \| \nabla_N e_\omega^{n-1} \|_2^2 
				+ \frac{3 \nu \dt}{16} \| \nabla_N e_\omega^{n-2} \|_2^2 
				\\ 
				& 
				+ \Big( \frac{7}{12} + \frac{32 \gamma_0^2 \tilde{C}_1^2 \nu^{-1} \dt}{3}  \Big) 
				\|  e_\omega^n  - e_\omega^{n-1} \|_2^2  
				+ \Big( \frac16 + \frac{5 \gamma_0^2 \tilde{C}_1^2 \nu^{-1} \dt}{3}  \Big)  
				\| e_\omega^{n-1} - e_\omega^{n-2}  \|_2^2  
				\\
				& 
				+ \breve{C}_1  \dt \|  e_\omega^{n+1}  \|_2^2 + \breve{C}_0  \dt \|  e_\omega^n  \|_2^2   
				+ \breve{C}_{-1}  \dt \|  e_\omega^{n-1}  \|_2^2  
				+ \breve{C}_{-2}  \dt \|  e_\omega^{n-2}  \|_2^2 
				+ \frac{\dt}{2} \| \tau^{n+1} \|_2^2   .  
			\end{aligned} 
			\label{convergence-8-3} 
		\end{equation} 
		Again, under the time step constraint~\eqref{constraint-BDF3-dt-1}, we obtain
		\begin{equation} 
			\begin{aligned} 
				&
				E^{n+1}  
				+ \frac{29 \nu \dt}{16} \| \nabla_N e_\omega^{n+1} \|_2^2   
				+ \frac{11}{12} \| e_\omega^{n+1} - e_\omega^n \|_2^2 
				- \frac58 \|  e_\omega^n  - e_\omega^{n-1} \|_2^2  
				- \frac{5}{24} \| e_\omega^{n-1} - e_\omega^{n-2}  \|_2^2
				\\
				\le & 
				E^n  
				+ \frac{5 \nu \dt}{4} \| \nabla_N e_\omega^n \|_2^2  
				+ \frac{\nu \dt}{4} \| \nabla_N e_\omega^{n-1} \|_2^2 
				+ \frac{3 \nu \dt}{16} \| \nabla_N e_\omega^{n-2} \|_2^2   
				\\
				& 
				+ \breve{C}_1  \dt \|  e_\omega^{n+1}  \|_2^2 + \breve{C}_0  \dt \|  e_\omega^n  \|_2^2   
				+ \breve{C}_{-1}  \dt \|  e_\omega^{n-1}  \|_2^2  
				+ \breve{C}_{-2}  \dt \|  e_\omega^{n-2}  \|_2^2 
				+ \frac{\dt}{2} \| \tau^{n+1} \|_2^2  , 
			\end{aligned} 
			\label{convergence-8-4} 
		\end{equation} 
		where $E^n := \| \alpha_1 e_\omega^n \|_2^2
				+ \| \alpha_2 e_\omega^n + \alpha_3 e_\omega^{n-1} \|_2 ^2 
				+ \| \alpha_4 e_\omega^n + \alpha_5 e_\omega^{n-1} 
				+ \alpha_6 e_\omega^{n-2} \|_2^2$. In turn, a summation in time indicates that 
		\begin{equation} 
			\begin{aligned} 
				&
				E^{n+1} + \frac{\nu \dt}{8} \sum_{k=1}^{n+1} \| \nabla_N e_\omega^k \|_2^2 
				+ \frac{1}{12} \sum_{k=1}^{n+1} \| e_\omega^k - e_\omega^{k-1} \|_2^2  
				\\
				\le & 
				E^0 + \tilde{C}_2 \dt \sum_{k=0}^{n+1} \| e_\omega^k \|_2^2  
				+ \frac{\dt}{2} \sum_{k=1}^{n+1} \| \tau^k \|_2^2 
				+ O ( (\dt^3 + h^m)^2 ) ,  \quad 
				\tilde{C}_2 := \breve{C}_1 + \breve{C}_0 + \breve{C}_{-1} + \breve{C}_{-2} . 
			\end{aligned} 
			\label{convergence-8-5} 
		\end{equation} 
		Meanwhile, by the fact that $\| e_\omega^k \|_2^2  \le \alpha_1^{-2} E^k$, we arrive at 
		\begin{equation} 
			\begin{aligned} 
				&
				E^{n+1} + \frac{\nu \dt}{8} \sum_{k=1}^{n+1} \| \nabla_N e_\omega^k \|_2^2 
				\le 
				E^0 + \alpha_1^{-2} \tilde{C}_2 \dt \sum_{k=0}^{n+1} E^k   
				+ \frac{\dt}{2} \sum_{k=1}^{n+1} \| \tau^k \|_2^2 
				+ O ( (\dt^3 + h^m)^2 ) . 
			\end{aligned} 
			\label{convergence-8-6} 
		\end{equation} 
		As a result, an application of discrete Gronwall inequality leads to the convergence estimate 
		\begin{equation} 
			E^{n+1} + \frac{\nu \dt}{8} \sum_{k=1}^{n+1} \| \nabla_N e_\omega^k \|_2^2  
			\le \hat{C} ( \dt^3 + h^m )^2 . 
			\label{convergence-8-7} 
		\end{equation} 
		Furthermore, its combination with definition~\eqref{convergence-8-4} (for $E^{n+1}$) indicates the desired result~\eqref{convergence-0}. This completes the proof of Theorem~\ref{thm:convergence}. 

\bibliographystyle{elsarticle-num}      % mathematics and physical sciences
\bibliography{NSE}
\end{document}